\numberwithin{equation}{section}
\newtheorem{theorem}{Theorem}[section]
\newtheorem{thm}[theorem]{Theorem}
\newtheorem{lem}[theorem]{Lemma}
\newtheorem{prop}[theorem]{Proposition}
\newtheorem{coro}[theorem]{Corollary}
\newtheorem{quest}[theorem]{Question}
\newtheorem{pbm}[theorem]{Problem}
\theoremstyle{definition}
\newtheorem{definition}[theorem]{Definition}
\newtheorem{defi}[theorem]{Definition}
\newtheorem{example}[theorem]{Example}
\newtheorem{exa}[theorem]{Example}
\newtheorem{remark}[theorem]{Remark}
 \newtheorem*{ackn}{Acknowledgements}
 \newtheorem*{thmA}{Theorem A} 
 \newtheorem*{thmB}{Theorem B} 
\newtheorem*{thmC}{Theorem C} 
\newtheorem*{thmD}{Theorem D}
 \theoremstyle{plain}
\newtheorem*{namedthm}{\namedthmname}
\newcounter{namedthm}
 \newcommand{\D}{\mathbb D}
 \newcommand{\R}{\mathbb R}
 \newcommand{\C}{\mathbb C}
  \newcommand{\PP}{\mathbb P}
 \newcommand{\e}{\varepsilon}
 \newcommand{\f}{\varphi}
 \newcommand{\p}{\psi}
 \newcommand{\z}{\overline{z}}
 \newcommand \PSH {{\rm PSH}}
\subjclass[2010]{32W20, 32U05, 32Q15, 35A23}
\keywords{Monge-Amp\`ere volumes, hermitian metrics}
\begin{document}

\title[Plurisigned hermitian metrics]{Plurisigned hermitian metrics}

\author{Daniele Angella, Vincent Guedj,  Chinh H. Lu}

  \address{Dipartimento di Matematica e Informatica ``Ulisse Dini'' \\ Universit\`a di Firenze \\ viale Morgagni 67/A \\ 50134 Firenze, Italy}

\email{\href{mailto:daniele.angella@gmail.com}{daniele.angella@gmail.com}, \href{mailto:daniele.angella@unifi.it}{daniele.angella@unifi.it}}
\urladdr{\href{https://sites.google.com/site/danieleangella/}{https://sites.google.com/site/danieleangella/}}

\address{Institut de Math\'ematiques de Toulouse   \\ Universit\'e de Toulouse \\
118 route de Narbonne \\
31400 Toulouse, France\\}

\email{\href{mailto:vincent.guedj@math.univ-toulouse.fr}{vincent.guedj@math.univ-toulouse.fr}}
\urladdr{\href{https://www.math.univ-toulouse.fr/~guedj}{https://www.math.univ-toulouse.fr/~guedj/}}

\address{Universit\'e Paris-Saclay, CNRS, Laboratoire de Math\'ematiques d'Orsay, 91405, Orsay, France.}

\email{\href{mailto:hoang-chinh.lu@universite-paris-saclay.fr}{hoang-chinh.lu@universite-paris-saclay.fr}}
\urladdr{\href{https://www.imo.universite-paris-saclay.fr/~lu/}{https://www.imo.universite-paris-saclay.fr/~lu/}}

\date{\today}

 \begin{abstract}
 Let $(X,\omega)$ be a compact hermitian manifold of dimension $n$.
 We study the asymptotic behavior of  Monge-Amp\`ere volumes $\int_X (\omega+dd^c \f)^n$,
 when $\omega+dd^c \f$ varies in the set of hermitian forms that are $dd^c$-cohomologous to $\omega$.
   We show that these Monge-Amp\`ere volumes are uniformly bounded  
 if $\omega$ is "strongly pluripositive",
 and that they are uniformly positive
 if $\omega$ is "strongly plurinegative". 
 This motivates the study of the existence of such plurisigned hermitian metrics.

 We analyze several classes of examples (complex parallelisable manifolds, twistor spaces, Vaisman manifolds)
  admitting such metrics,  showing that they cannot coexist. 
  We take a close look at $6$-dimensional nilmanifolds which admit a left-invariant complex structure,
  showing that each of them admit a plurisigned hermitian metric,
  while only few of them admit a pluriclosed metric.
  We also study $6$-dimensional solvmanifolds with trivial canonical bundle.
\end{abstract}

 \maketitle

\tableofcontents

\section*{Introduction}

 The study of complex Monge-Amp\`ere equations on compact hermitian (non-K\"ahler) manifolds
has gained considerable interest in the last decade.
Tosatti-Weinkove \cite{TW10} and then Sz\'ekelyhidi-Tosatti-Weinkove \cite{STW17}
have resolved the Gauduchon-Calabi-Yau conjecture, extending to the hermitian setting
Yau's fundamental result \cite{Yau78}.
Associated degenerate complex Monge-Amp\`ere equations have been
systematically studied by
 Dinew, Ko{\l}odziej, and Nguyen \cite{DK12,KN15,Din16, KN19},
as well as in  \cite{LPT21,GL21a,GL21b,GL21c}.

\smallskip

By comparison with the setting of K\"ahler manifolds, a key new difficulty lies in the
uniform control of Monge-Amp\`ere volumes.
Given $X$ a compact complex manifold of complex dimension $n$ equipped with a hermitian metric $\omega$, it is of crucial importance to decide whether
$$
v_+(\omega):=\sup \left\{ \int_X (\omega+dd^c \f)^n \; : \; \f \in {\mathcal C}^{\infty}(X)
\text{ and } \omega+dd^c \f>0  \right\}
$$
is finite, and whether
 $$
v_-(\omega):=\inf \left\{ \int_X (\omega+dd^c \f)^n \; : \; \f \in {\mathcal C}^{\infty}(X)
\text{ and } \omega+dd^c \f>0  \right\}
$$
is bounded away from zero.  
Here  $d=\partial+\overline{\partial}$ and $d^c=\frac{1}{2i} (\partial-\overline{\partial})$.

\smallskip

It follows from Stokes theorem  that $v_-(\omega)=v_+(\omega)=\int_X \omega^n$ when $\omega$ is closed
or, more generally, when $dd^c \omega=0$ and $dd^c \omega^2=0$.
The latter conditions are however rather restrictive and it is an important open problem
to decide whether $v_+(\omega)$ (resp. $v_-(\omega)$) is always finite (resp. positive).
We refer the reader to \cite[Theorem C]{GL21b} for an
illustration of how the finiteness of $v_+(\omega)$ is related to a 
transcendental form of Demailly's holomorphic Morse inequalities,
while \cite{GL21c} strongly motivates the condition $v_-(\omega)>0$.

\smallskip

It has been shown in \cite[Theorem A]{GL21b}
that the condition $v_+(\omega)<+\infty$
(resp. $v_-(\omega)>0$) is independent of the choice of hermitian metric
--it only depends on the complex structure-- and is a bimeromorphic invariant.

\smallskip

We further study these conditions in this article,
testing them on various classes of examples.
Our first observation  is the following hereditary result.

\begin{thmA}
{\it 
Let $(X,\omega_X)$ be a compact hermitian manifold and
let $Y \subseteq X$ be a closed submanifold   equipped with a hermitian form $\omega_Y$.
     If $v_+(X,\omega_X)<+\infty$    then $v_+(Y,\omega_Y)<+\infty$.
}
\end{thmA}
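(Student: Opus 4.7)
My plan is to relate the top Monge--Amp\`ere mass on $Y$ (of complex dimension $k$) to an $n$-dimensional Monge--Amp\`ere integral on $X$ via a concentration-of-mass argument, and then apply the hypothesis $v_+(X,\omega_X)<+\infty$. Let $c:=n-k$ denote the codimension of $Y$ in $X$. By the metric invariance of the finiteness of $v_+$ established in \cite{GL21b}, I may normalize $\omega_Y:=\omega_X|_Y$.

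Fix a smooth defining function $\rho:X\to[0,+\infty)$ vanishing precisely to order $2$ along $Y$, locally of the form $|z''|^2$ in holomorphic coordinates $(z',z'')$ with $Y=\{z''=0\}$. A direct model computation in $\mathbb{C}^c$ shows that $dd^c\log(|w|^2+\varepsilon)$ is a positive $(1,1)$-form, and globally $\psi_\varepsilon:=\log(\rho+\varepsilon)$ satisfies $dd^c\psi_\varepsilon\ge-\Theta$ for a fixed smooth $(1,1)$-form $\Theta$ on $X$ independent of $\varepsilon$ (the normal-bundle curvature contribution). Moreover the top power $(dd^c\psi_\varepsilon)^c$ concentrates weakly, as $\varepsilon\to 0^+$, onto $c_0[Y]$ plus a bounded smooth remainder, where $c_0>0$ is a universal constant and $[Y]$ is the integration current on $Y$.

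Given $\varphi\in C^\infty(Y)$ with $\omega_Y+dd^c\varphi>0$, extend $\varphi$ to a smooth function $\tilde\varphi$ on $X$, and pick $A\ge 1$ large enough that $A\omega_X\ge 2\Theta$ and $A\omega_X+dd^c\tilde\varphi\ge\Theta$. Both $B':=A\omega_X+dd^c\tilde\varphi-\Theta$ and $C'_\varepsilon:=\Theta+dd^c\psi_\varepsilon$ are then smooth positive $(1,1)$-forms, so
$$\eta_\varepsilon:=A\omega_X+dd^c(\tilde\varphi+\psi_\varepsilon)=B'+C'_\varepsilon$$
is a smooth hermitian metric on $X$. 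The hypothesis gives $\int_X\eta_\varepsilon^n\le v_+(X,A\omega_X)=A^n v_+(X,\omega_X)$, while binomial expansion and positivity of each summand yield
$$\eta_\varepsilon^n\;\ge\;\binom{n}{c}\,(B')^k\wedge (C'_\varepsilon)^c.$$
Letting $\varepsilon\to 0^+$ and using the weak concentration of $(C'_\varepsilon)^c$ together with continuity of $(B')^k$, the integrated lower bound is $c_0\binom{n}{c}\int_Y(B')^k|_Y+O(1)$. For $A$ sufficiently large (uniformly in $\varphi$) one has $B'|_Y\ge \omega_Y+dd^c\varphi$, whence
$$\int_Y(\omega_Y+dd^c\varphi)^k\;\le\;\mathrm{const}\cdot A^n\,v_+(X,\omega_X)+\mathrm{const}.$$

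The principal obstacle is to make $A$ \emph{uniform} in $\varphi$: a naive smooth extension forces $A$ to depend on $\|dd^c\tilde\varphi\|_{L^\infty}$, which can be unbounded across admissible $\varphi$. I would resolve this by taking $\tilde\varphi$ to be a quasi-psh envelope extension---namely the upper semicontinuous regularization of $\sup\{u\in\PSH(X,\omega_X):u|_Y\le\varphi\}$---for which $\omega_X+dd^c\tilde\varphi\ge 0$ weakly, so only the universal domination of $\Theta$ is required and $A$ becomes $\varphi$-independent; suitable regularization recovers the smoothness needed to test against $v_+$. An alternative route is to reduce to the hypersurface case via iterated blowup (using the bimeromorphic invariance of \cite{GL21b}), where $\psi_\varepsilon$ can be replaced by $\log(|s|_h^2+\varepsilon)$ for a defining section $s$ of $\mathcal{O}_X(Y)$ equipped with a fixed hermitian metric $h$ of controlled curvature.
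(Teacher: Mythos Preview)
Your strategy matches the paper's: restrict to $\omega_Y=\omega_X|_Y$, extend $\varphi$ to a quasi-psh function on $X$, use a potential with analytic singularities along $Y$ to concentrate mass onto $[Y]$, and bound the resulting mixed Monge--Amp\`ere integral by $v_+(X,\omega_X)$. The differences lie in execution, and one of them is precisely the point you flag as the ``principal obstacle''.

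The paper eliminates the $\varphi$-dependence of $A$ in one stroke by invoking \cite[Proposition 2.1]{CGZ13}: every smooth strictly $\omega_Y$-psh function $\varphi$ on $Y$ extends to a smooth $\phi\in\PSH(X,\omega_X)$ with $\phi|_Y=\varphi$. This is exactly the envelope-type extension you are groping toward, already packaged with the needed regularity; with it one may take $A=1$ throughout and no rescaling is needed. On the concentration side, instead of regularizing by $\psi_\varepsilon=\log(\rho+\varepsilon)$ and passing to a limit, the paper fixes once and for all a $\psi_Y\in\PSH(X,\omega_X)$ with analytic singularities along $Y$ satisfying $(\omega_X+dd^c\psi_Y)^{n-k}\ge\delta_0[Y]$ (this is \cite[Lemma 2.1]{DP04}), and then truncates $\psi_j=\max(\psi_Y,-j)$. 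Finally, rather than scaling by $A$, the paper absorbs the two potentials via the convexity trick $u_j=\tfrac12(\phi+\psi_j)\in\PSH(X,\omega_X)\cap L^\infty$, which yields $(\omega_X+dd^c\phi)^k\wedge(\omega_X+dd^c\psi_j)^{n-k}\le 2^n(\omega_X+dd^c u_j)^n$ and hence the clean bound $v_+(Y,\omega_Y)\le 2^n\delta_0^{-1}v_+(X,\omega_X)$.

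In short: your outline is correct, but the paper's proof is shorter because it already has the right extension theorem \cite{CGZ13} in hand and uses the averaging trick in place of your rescaling by $A$.
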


We then   establish the finiteness of $v_+(X)$
(resp. positivity of $v_-(X)$)
when $X$ admits special pluripositive (resp. plurinegative) hermitian metrics.

\begin{thmB}
{\it 
Let  $X$ be a compact complex manifold of dimension $n$.

\begin{enumerate}
\item If there exists a hermitian metric $\omega$  and $\e>0$ such that 
  	$dd^c \omega \geq 0$ and
$
dd^c \omega^q \geq \e \omega \wedge dd^c \omega^{q-1},
$
for
$
2 \leq q \leq n-2,
$
then  $v_+(\omega)<+\infty$.

\smallskip
  	
  	\item  If $n=3$ and $X$ admits a  metric  $\omega$ such that  $dd^c \omega\leq 0$,  	 then $v_-(\omega)>0$. 
\end{enumerate}

\noindent 	In particular if $n=3$ and $\omega$ is pluriclosed 
 	 then $0 < v_-(\omega) \leq v_+(\omega)<+\infty$.
}
\end{thmB}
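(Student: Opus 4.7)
My plan is to derive both bounds by expanding
\[
\int_X \omega_\varphi^n = \sum_{k=0}^n \binom{n}{k}\int_X \omega^{n-k}\wedge (dd^c\varphi)^k
\]
and integrating by parts each term via the Stokes identity $\int_X \alpha\wedge dd^c\varphi = \int_X \varphi\cdot dd^c\alpha$ on the closed manifold $X$. Since $d\omega_\varphi = d\omega$ and $d^c\omega_\varphi = d^c\omega$ (because $dd^c\varphi$ is both $d$- and $d^c$-closed), the Leibniz rule for $dd^c$ on products yields the central formula
\[
dd^c(\omega_\varphi^{n-1}) = (n-1)\,\omega_\varphi^{n-2}\wedge dd^c\omega + (n-1)(n-2)\,\omega_\varphi^{n-3}\wedge i\partial\omega\wedge\bar\partial\omega,
\]
whose sign drives both parts.

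For part (1), I normalize $\sup_X\varphi = 0$ so that $\varphi\leq 0$. Under $dd^c\omega\geq 0$ the $(n,n)$-form above is nonnegative, so Stokes gives $\int_X\omega_\varphi^n = \int_X\omega\wedge\omega_\varphi^{n-1} + \int_X\varphi\cdot dd^c(\omega_\varphi^{n-1})\leq\int_X\omega\wedge\omega_\varphi^{n-1}$. The same Leibniz expansion, applied to $dd^c(\omega^k\wedge\omega_\varphi^{n-1-k})$, splits into three pieces --- $dd^c(\omega^k)\wedge\omega_\varphi^{n-1-k}$, the cross-term $2k(n-1-k)\,\omega^{k-1}\wedge\omega_\varphi^{n-2-k}\wedge i\partial\omega\wedge\bar\partial\omega$, and $\omega^k\wedge dd^c(\omega_\varphi^{n-1-k})$ --- each nonnegative under the assumptions, with the inequalities $dd^c\omega^q\geq\varepsilon\omega\wedge dd^c\omega^{q-1}$ ensuring that positivity propagates to all intermediate degrees. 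Iterating produces the telescoping chain $\int_X\omega_\varphi^n\leq\int_X\omega\wedge\omega_\varphi^{n-1}\leq\ldots\leq\int_X\omega^n$, whence $v_+(\omega)\leq\int_X\omega^n<+\infty$.

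For part (2), with $n=3$, further integration by parts gives $\int_X(dd^c\varphi)^3 = 0$ (since $(dd^c\varphi)^2$ is $dd^c$-closed), $\int_X\omega\wedge(dd^c\varphi)^2 = -\int_X d\varphi\wedge d^c\varphi\wedge dd^c\omega$, and $\int_X\omega^2\wedge dd^c\varphi = -2\int_X d\varphi\wedge\omega\wedge d^c\omega$. Substituting into the binomial expansion yields
\[
\int_X\omega_\varphi^3 = \int_X\omega^3 - 6\int_X d\varphi\wedge\omega\wedge d^c\omega - 3\int_X d\varphi\wedge d^c\varphi\wedge dd^c\omega.
\]
Under $dd^c\omega\leq 0$ the quadratic last term is nonnegative. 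Rewriting the middle linear term via Stokes as $6\int_X\varphi\cdot(\omega\wedge dd^c\omega - d^c\omega\wedge d\omega)$ and using $\sup_X\varphi = 0$, a Chern--Levine--Nirenberg $L^1$ estimate on $\omega$-plurisubharmonic potentials gives $\|\varphi\|_{L^1(\omega^n)}\leq C(\omega)$, so this term is uniformly bounded in $\varphi$.

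The main obstacle is that this crude bound only yields $\int_X\omega_\varphi^3\geq\int_X\omega^3 - C'$, which need not be positive. To extract a genuine positive lower bound I plan to absorb the middle term into the nonnegative quadratic term by a Cauchy--Schwarz inequality of the form
\[
\left|\int_X d\varphi\wedge\omega\wedge d^c\omega\right|^2\leq\delta\int_X\bigl(-\,d\varphi\wedge d^c\varphi\wedge dd^c\omega\bigr) + C_\delta,
\]
valid for every $\delta>0$ with $C_\delta$ depending only on $\omega$; choosing $\delta$ small enough then produces the uniform lower bound $v_-(\omega)\geq c>0$. The ``in particular'' statement is then immediate: for pluriclosed $\omega$ on a $3$-fold, $dd^c\omega = 0$ satisfies both sign conditions (the range $2\leq q\leq n-2$ in part (1) being empty when $n=3$), so both conclusions apply and $0 < v_-(\omega)\leq v_+(\omega) < +\infty$.
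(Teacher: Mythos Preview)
Your proposal contains genuine gaps in both parts.

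\textbf{Part (1).} Your telescoping argument rests on the claim that
\[
dd^c(\omega_\varphi^{n-1}) = (n-1)\,\omega_\varphi^{n-2}\wedge dd^c\omega + (n-1)(n-2)\,\omega_\varphi^{n-3}\wedge i\partial\omega\wedge\bar\partial\omega
\]
is nonnegative whenever $dd^c\omega\geq 0$. This is false: for a general $(2,1)$-form $\alpha$ the $(3,3)$-form $i\alpha\wedge\bar\alpha$ has no sign (take $\alpha=dz_1\wedge dz_2\wedge d\bar z_3$ in local coordinates and compute), and in the paper the condition $d\omega\wedge d^c\omega\geq 0$ is explicitly listed as an \emph{extra} hypothesis beyond $dd^c\omega\geq 0$ (see the discussion after Theorem~\ref{thm:chiose}). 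Worse, your chain would yield $v_+(\omega)\leq\int_X\omega^n$, hence $v_+(\omega)=\int_X\omega^n$; by Chiose's theorem this forces $dd^c\omega=0$ and $d\omega\wedge d^c\omega=0$, so your conclusion is simply wrong for any pluripositive non-pluriclosed $\omega$ (e.g.\ the Iwasawa manifold). The paper's argument is quite different: one rescales to work with $a\omega$-psh functions for $0<a\ll 1$, expands $\int_X(\omega+add^cu)^n$ binomially, integrates by parts to produce terms $\int_X du\wedge d^cu\wedge(\omega_u-\omega)^{p-2}\wedge dd^c(\omega^{n-p})$, and then groups the resulting expansion into even and odd powers of $\omega$. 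The hypotheses $dd^c\omega^q\geq\varepsilon\,\omega\wedge dd^c\omega^{q-1}$ allow the positive even-indexed terms to dominate the odd ones provided $a$ is small enough; the final bound is $v_+(\omega)\leq a^{-n}C_1$, not $\int_X\omega^n$.

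\textbf{Part (2).} You correctly diagnose that the direct expansion only gives $\int_X\omega_\varphi^3\geq\int_X\omega^3-C'$, which can be negative. Your proposed Cauchy--Schwarz remedy does not rescue this. Even if an inequality of the form $|A|^2\leq\delta B+C_\delta$ held (with $A$ the linear term and $B=-\int_X dd^c\omega\wedge d\varphi\wedge d^c\varphi\geq 0$), inserting it back yields at best a uniform lower bound of the form $\int_X\omega^3-C''$, still possibly negative. The pluriclosed case makes this transparent: if $dd^c\omega=0$ then $B\equiv 0$, while the linear term $3\int_X\varphi\,dd^c\omega^2=6\int_X\varphi\,d\omega\wedge d^c\omega$ is generically nonzero, so no inequality against $B$ can absorb it. The paper's proof proceeds by an entirely different mechanism: assuming $\int_X\omega_{u_j}^3\to 0$ for a normalized sequence $u_j$, one solves the auxiliary equations $(\omega+dd^c\phi_j)^3=c_j u_j^2\omega^3$ to obtain uniformly bounded $\phi_j$, sets $v_j=\max(u_j,\phi_j-t)$, and uses the plurifine identity together with $dd^c\omega\leq 0$ to show $\int_X\omega_{v_j}^3\leq 2\int_X\omega_{u_j}^3\to 0$. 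Since the $v_j$ are uniformly bounded this contradicts the (elementary) positive lower bound for Monge--Amp\`ere volumes of uniformly bounded potentials.
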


We also provide  a curvature condition to control $v_-(\omega)$ in higher dimension,
see Definition \ref{def:plurineg} and Theorem \ref{thm:plurineghighdim}.

These conditions are  always fulfilled when $\dim_{\C} X \leq 2$, so we initiate   a systematic study
of the $3$-dimensional case.
Using Hahn-Banach theorem in the spirit of \cite{Mic82,HL83},
 one can show 
(Theorems \ref{thm:hbd+} and \ref{thm:hbd-})
that there exists a pluripositive (resp. plurinegative) hermitian metric $\omega$ on $X$  if and only if
any positive current $\tau$ of bidimension $(2,2)$ such that 
$dd^c \tau \leq 0$ (resp. $dd^c \tau \geq 0$) satisfies $dd^c \tau=0$.
Thus in dimension $3$   such plurisigned hermitian metrics cannot coexist,
i.e. the following conditions are mutually exclusive (see Corollary \ref{cor:obstruction}):
\begin{itemize}
 \item $X$ admits a  hermitian metric $\omega$ such that $dd^c \omega \geq 0$ and $dd^c \omega \neq 0$;
 \item $X$ admits a  hermitian metric $\omega$ such that  $dd^c \omega = 0$;
 \item $X$ admits a  hermitian metric $\omega$ such that $dd^c \omega \leq 0$ and $dd^c \omega \neq 0$;
 \item $X$ does not admit any  hermitian metric $\omega$ such that $dd^c \omega$ has a sign.
 \end{itemize}
 
 Each case 
 does occur as we show by analyzing several classes of examples:
 \begin{itemize}
 \item   Complex parallelisable manifolds and twistor spaces of K3 surfaces   admit pluripositive
 hermitian metrics (see Proposition \ref{pro:parallel} and Theorem \ref{thm:twistor}).
 \item The existence of pluriclosed hermitian metrics has been thoroughly studied
 in recent years (see \cite{FPS04,FT09,Verb14,couv,Ot20,ados}).
 \item Vaisman manifolds (a special type of locally conformally K\"ahler manifolds) admit
 plurinegative hermitian metrics (Proposition \ref{pro:vaisman}).
 \item Non-K\"ahler manifolds from the class ${\mathcal C}$ of Fujiki do not admit any plurisgned hermitian metric
 (see Proposition \ref{pro:fujiki}).
 \end{itemize}
 This alternative   is however no longer valid in higher dimension, see Example \ref{exa:different-sign}.

We take a closer look at nilmanifolds $X=\Gamma \backslash G$  of (real) dimension $6$,
where $G$ is a connected and simply connected nilpotent Lie group, and $\Gamma$ is a discrete co-compact subgroup.
There are 34 isomorphism classes of nilpotent Lie algebras in dimension 6, only 18 of which admit a complex structure.
Following \cite{andrada-barberis-dotti,ugarte-transf,ugarte-villacampa,couv} we gather them in four large families (Np), (Ni), (Nii), (Niii) and show the following.

\begin{thmC}
{\it 
 Consider a six-dimensional nilmanifold $X=\Gamma \backslash G$ endowed with a left-invariant complex structure.
There is always a plurisigned hermitian metric. More precisely if $X$ is not a complex torus, then
\begin{itemize}
\item either $X$ belongs to one of the classes (Np), (Nii), (Niii) and 
then any left-invariant hermitian metric is pluripositive but not pluriclosed.
\item or $X$ belongs to the class (Ni) and --depending on the complex structure-- it admits a left-invariant hermitian metric wich is either pluriclosed, or pluripositive but not pluriclosed, or else plurinegative but not pluriclosed.
\end{itemize}
}
\end{thmC}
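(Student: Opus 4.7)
The plan is to reduce the entire statement to a finite computation on the six-dimensional nilpotent Lie algebra $\mathfrak{g} = \mathrm{Lie}(G)$ and then examine each of the $18$ complex-structure classes in turn using the structure equations of \cite{ugarte-transf,ugarte-villacampa}. The first step is a standard symmetrization argument: if $\omega$ is any left-invariant hermitian form on $X = \Gamma\backslash G$, then the sign of $dd^c \omega$ and of $dd^c \omega^2$ is controlled pointwise by constant coefficients in a left-invariant coframe $\{\zeta^1,\zeta^2,\zeta^3\}$ of $(1,0)$-forms diagonalizing the complex structure. Conversely, Belgun's symmetrization principle implies that the existence of a plurisigned (resp.\ pluriclosed) hermitian metric on $X$ is equivalent to the existence of a left-invariant one, so the whole problem is algebraic.

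Next I would handle the classes (Np), (Nii), (Niii). Class (Np) consists exactly of the complex parallelisable nilmanifolds in dimension $6$ (i.e.\ the Iwasawa-type examples), and Proposition \ref{pro:parallel} quoted above already asserts that any such manifold carries a pluripositive hermitian metric. Working with the standard $(1,0)$-coframe in which $d\zeta^1 = d\zeta^2 = 0$ and $d\zeta^3 = \zeta^{12}$, a direct expansion shows that \emph{every} left-invariant hermitian form $\omega = \frac{i}{2}\sum h_{j\bar k}\zeta^j\wedge\bar\zeta^k$ (with $(h_{j\bar k})$ a positive definite hermitian matrix) satisfies $dd^c \omega > 0$, hence in particular is never pluriclosed. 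For (Nii) and (Niii), the structure equations of \cite{ugarte-transf} give an analogous (though a bit longer) computation: one expands $dd^c\omega$ in the canonical $(2,2)$-basis and checks that regardless of the choice of hermitian matrix $(h_{j\bar k})$ the coefficient of $\zeta^{12\bar 1\bar 2}\wedge(\text{volume})^{-1}$ has a fixed strictly positive sign, while the pluriclosed equation $\partial\bar\partial \omega = 0$ forces a linear relation that is incompatible with positive definiteness. This is routine but combinatorial.

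The substantive case is (Ni), which is the richest one and contains the complex structures for which pluriclosed metrics exist. Here I would proceed by invoking the classification of left-invariant complex structures in this family (depending on one or two continuous parameters) together with the already-known determination of which of them admit pluriclosed metrics, due to Fino--Parton--Salamon and Ugarte--Villacampa. For those complex structures that admit a pluriclosed metric, there is nothing more to do. For those that do not, the task is to show that one may always adjust the hermitian matrix $(h_{j\bar k})$ so that $dd^c \omega$ has a definite sign. Writing $dd^c\omega = F(h)\cdot\Omega$ where $\Omega$ is a fixed positive $(2,2)$-volume form and $F$ is a polynomial expression in the entries of $h$, I would analyze the sign of $F$ restricted to the cone of positive definite hermitian matrices and show that --according to the subfamily of (Ni)-- this sign is either always $\geq 0$ (pluripositive case), always $\leq 0$ (plurinegative case), or $F\equiv 0$ for suitable $h$ (pluriclosed case).

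The main obstacle is this last case analysis inside (Ni): one must show that the three regimes (pluriclosed; strictly pluripositive non-pluriclosed; strictly plurinegative non-pluriclosed) partition the classification and that the correct regime is realized for every complex structure in the family, with no mixed or undetermined cases. In practice this requires a careful bookkeeping of the parameters of the complex structure against the coefficients appearing in the structure equations, and checking that the linear form $(h_{j\bar k}) \mapsto F(h)$ never vanishes identically on the positive cone unless the complex structure is pluriclosed. Once this is done, combining with Theorems \ref{thm:hbd+} and \ref{thm:hbd-} yields the dichotomy claimed in the statement, and the complex torus case is excluded only because there $dd^c\omega\equiv 0$ trivially while the statement is non-vacuous.
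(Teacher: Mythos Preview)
Your proposal is correct and follows essentially the same route as the paper: reduce to left-invariant metrics via Belgun symmetrization (this is Lemma~\ref{lem:average}), write a general left-invariant hermitian metric in a fixed coframe, and compute $\sqrt{-1}\partial\bar\partial\omega$ case by case for the four families (Np), (Ni), (Nii), (Niii).

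One remark that will save you effort: the ``main obstacle'' you anticipate in family (Ni) dissolves once the computation is carried out. In every family, $dd^c\omega$ turns out to be a single positive $(2,2)$-form (or, in (Niii), a sum of two) multiplied by a scalar of the form $t^2\cdot C$ (or $t^2,s^2$ times positive constants), where $t^2=h_{3\bar 3}>0$ is a metric coefficient and $C$ depends \emph{only} on the complex-structure parameters. Concretely, in (Ni) one finds $dd^c\omega=\tfrac{t^2}{2}(\lambda^2+\rho^2-2\Re D)\,\sqrt{-1}\varphi^{1\bar1}\wedge\sqrt{-1}\varphi^{2\bar2}$, so the trichotomy is governed by the sign of the single number $\lambda^2+\rho^2-2\Re D$ and there is no need to ``adjust'' the hermitian matrix or analyze $F$ on the positive cone: \emph{every} left-invariant metric has the same sign. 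The same phenomenon makes the non-pluriclosed claim in (Nii), (Niii) immediate (the scalar $C$ is a sum of squares of complex-structure parameters that cannot all vanish), so positive-definiteness of $(h_{j\bar k})$ plays no role there either. Finally, Theorems~\ref{thm:hbd+} and~\ref{thm:hbd-} are not needed for the proof of Theorem~C itself; they give the mutual exclusivity of the three regimes (Corollary~\ref{cor:obstruction}), which is a separate statement.
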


This analysis largely generalizes the influential work of Fino-Parton-Salamon  \cite{FPS04} who 
 characterized the existence of pluriclosed metric in this context.
We refer the reader to Section \ref{sec:nilmanifolds}
for a more precise statement. 

\smallskip

Following \cite{FOU15} we also analyze the case of $6$-dimensional solvmanifolds, --i.e. compact quotients of a connected solvable Lie group by a discrete subgroup,-- endowed with left-invariant complex structures with holomorphically trivial canonical bundle.
These are gathered in ten large families (see Section \ref{sec:solvmanifolds}),
besides the four large families of nilmanifolds that are dealt with by the previous statement.

\begin{thmD}
{\it 
Consider a 
 six-dimensional solvmanifold 
endowed with a left-invariant complex structure 
with holomorphically-trivial canonical bundle.
Then, there is always a plurisigned metric.
}
\end{thmD}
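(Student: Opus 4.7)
The plan is to mirror the strategy used for Theorem~C. The Fino-Otiman-Ugarte classification (\cite{FOU15}) provides an explicit list of ten families of six-dimensional non-nilpotent solvable Lie algebras $\mathfrak{g}$ carrying a left-invariant complex structure $J$ with holomorphically trivial canonical bundle; each family comes with a preferred coframe $\{\eta^1,\eta^2,\eta^3\}$ of left-invariant $(1,0)$-forms and explicit structure equations for $d\eta^j$ depending on at most a few real parameters. Since any $J$-invariant hermitian form built from $\{\eta^j,\ove{\eta^j}\}$ is left-invariant, so is its $dd^c$, and the sign condition can be tested at the Lie algebra level at a single point.

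For each family I would consider the diagonal left-invariant ansatz
$$
\omega_{\mathbf t} = \frac{i}{2}\sum_{j=1}^{3} t_j^2\,\eta^j \we \ove{\eta^j}, \qquad \mathbf t=(t_1,t_2,t_3)\in (0,\infty)^3,
$$
possibly enlarged with off-diagonal terms $is\,\eta^j\we\ove{\eta^k}+\ove{is}\,\eta^k\we\ove{\eta^j}$ when needed. Plugging the structure equations into $\bd\omega_{\mathbf t}$ and $\ove\bd\omega_{\mathbf t}$ yields $dd^c\omega_{\mathbf t}$ as a left-invariant real $(2,2)$-form whose coefficients are polynomials in $\mathbf t$ and the complex-structure moduli. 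In complex dimension three, testing whether such a form has a sign reduces to writing it as $f_{\mathbf t}\,\omega_{\mathbf t}^2/2 + \pi_{\mathbf t}$ with $\pi_{\mathbf t}$ primitive and comparing $|f_{\mathbf t}|$ with the norm of $\pi_{\mathbf t}$; this amounts to a finite algebraic sign check inside each family.

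The bookkeeping then splits into three strands. Several of the ten families already admit pluriclosed or Vaisman-type hermitian structures in the literature (\cite{FOU15} for balanced/SKT metrics, Proposition~\ref{pro:vaisman} for Vaisman ones), which immediately produces plurisigned metrics. A second group of families should parallel the nilpotent cases (Np), (Nii), (Niii) of Theorem~C, where every left-invariant diagonal metric is automatically pluripositive for some choice of $\mathbf t$. The delicate subfamilies are those whose complex structures depend on continuous parameters, where one must stratify the parameter space and show that on each stratum at least one of $dd^c\omega_{\mathbf t}\geq 0$, $dd^c\omega_{\mathbf t}\leq 0$, or $dd^c\omega_{\mathbf t}=0$ can be achieved by a suitable choice of $\mathbf t$ and of the off-diagonal entries $s$.

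The hardest step is to verify this trichotomy uniformly over all ten families and all parameter strata. The only a priori obstruction identified in the preceding sections is membership in the Fujiki class $\mathcal C$ (Proposition~\ref{pro:fujiki}), but non-K\"ahler solvmanifolds with left-invariant complex structures in the Fino-Otiman-Ugarte list are known to lie outside class $\mathcal C$, so this obstruction never arises here. The proof is then completed by the explicit case-by-case sign computation, carried out in parallel with the nilpotent analysis of Section~\ref{sec:nilmanifolds}.
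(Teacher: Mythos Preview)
Your outline is exactly the paper's approach: reduce to left-invariant metrics via symmetrization (Lemma~\ref{lem:average}), run through the ten families of \cite{FOU15}, and compute $dd^c\omega$ explicitly in each to exhibit a plurisigned metric; the paper carries this out with the general ansatz \eqref{eq:metric}, though diagonal metrics (your ansatz) already suffice in every case. Two small points: the appeal to the Fujiki obstruction is unnecessary and logically inverted---the argument is purely constructive, so one simply exhibits the metric rather than ruling out obstructions---and the reference to Vaisman structures is a red herring, as none of the ten families is handled that way. What you have written is a correct plan, but the actual case-by-case computation you defer \emph{is} the proof.
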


We refer the reader to Theorem \ref{thm:solvmanifold} for a more precise statement,
which notably shows that in most cases the plurisigned metric is  pluripositive.

\begin{ackn} 
The authors are partially supported by the research projects HERMETIC of the Labex CIMI,
PARAPLUI of the french ANR. The first named-author is supported by
PRIN2017 
(2017JZ2SW5), 
and by GNSAGA of INdAM.
\end{ackn}

\section{Preliminaries}

In the whole article we let $X$ denote a compact complex manifold of complex dimension $n \geq 1$,
and we fix  a hermitian form $\omega_X$ on $X$.

\subsection{Quasi-plurisubharmonic functions}

Let $\omega$ be a semi-positive $(1,1)$-form.

\subsubsection{Quasi-plurisubharmonic functions}

\begin{defi}
 A function is {\em quasi-plurisub\-harmonic} (quasi-psh for short)
  if it is locally given as the sum of  a smooth and a plurisubharmonic function.   
\end{defi}

 Given an open set $U\subset X$, quasi-psh functions
$\f:U \rightarrow \R \cup \{-\infty\}$ satisfying
$
\omega_{\f}:=\omega+dd^c \f \geq 0
$
in the weak sense of currents are called {\em $\omega$-psh} functions on $U$. 
Constant functions are $\omega$-psh functions since $\omega$ is semi-positive.

A ${\mathcal C}^2$-smooth function $u\in \mathcal{C}^2(X)$ has bounded Hessian, hence $\e u$ is
$\omega$-psh on $X$ if $0<\e$ is small enough and $\omega$ is  positive (i.e. hermitian).

\begin{defi}
We let $\PSH(X,\omega)$ denote the set of all $\omega$-plurisubharmonic functions which are not identically $-\infty$.  
\end{defi}

The set $\PSH(X,\omega)$ is a closed subset of $L^1(X)$, 
for the $L^1$-topology.
We refer the reader to \cite{Dem12,GZbook,Din16} for basic
properties of $\omega$-psh functions, and simply
recall that:
\begin{itemize}
\item $PSH(X,\omega) \subset PSH(X,\omega')$ if $\omega \leq \omega'$;
\item $PSH(X,\omega) \subset L^r(X)$ for $r \geq 1$;
 the induced $L^r$-topologies are equivalent;
\item the subset $PSH_A(X,\omega):=\{ u \in PSH(X,\omega), \; -A \leq \sup_X u \leq 0 \}$ is compact in
$L^r(X)$ for any $r \geq 1$ and any $A>0$.
\end{itemize}

\begin{defi}
A quasi-psh function $\f$ has {\em analytic singularities} if it can be locally written as
$$
\f(z)=c \log \sum_{j=1}^s |f_j(z)|^2+\rho(z),
$$
where $c>0$, the $f_j$'s are holomorphic functions and $\rho$ is a smooth function.
\end{defi}

We recall the following fundamental regularization result of Demailly \cite{Dem92}.

\begin{theorem} \label{thm:demreg}
Any quasi-psh function is
the decreasing limit of smooth quasi-psh functions. Moreover when $\omega$ is a hermitian form,
any function $\f \in PSH(X,\omega)$ is the decreasing limit of functions $\f_j \in PSH(X,\omega)$
with analytic singularities.
\end{theorem}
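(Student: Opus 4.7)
The plan is to combine three ingredients: local convolution with a radial mollifier, a global patching via the regularized maximum, and---for the analytic-singularities statement---Demailly's Bergman-kernel approximation built on the Ohsawa-Takegoshi $L^2$ extension theorem.

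First I would handle the smooth approximation. On a small coordinate ball $B \Sub \C^n$ a quasi-psh function splits as $\f = u + \rho$ with $u$ psh and $\rho$ smooth; convolving $u$ with a standard radial mollifier gives smooth psh functions $u_\e$ decreasing to $u$ on slightly smaller balls, so $\f_\e := u_\e + \rho$ are smooth quasi-psh with $\f_\e \downarrow \f$. To globalize on compact $X$, cover $X$ by finitely many coordinate charts and glue the local regularizations via the regularized maximum of Richberg/Demailly; this preserves monotonicity in $\e$ while losing only a factor $(1+\de)$ in front of $\omega$. A diagonal extraction letting $\de \downarrow 0$ then yields the first assertion.

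For the second statement, fix a smooth hermitian line bundle $(L,h_L)$ on $X$. For each $m \geq 1$ consider the Hilbert space $\mathcal{H}_m$ of holomorphic sections $\sigma$ of $mL$ with
$
\int_X |\sigma|^2_{h_L^m}\, e^{-2m\f}\, \omega^n < +\infty,
$
pick an orthonormal basis $(\sigma_{m,j})_j$, and set
$$
\f_m(z) := \frac{1}{2m} \log \sum_j |\sigma_{m,j}(z)|^2_{h_L^m}.
$$
By construction each $\f_m$ has analytic singularities. A lower bound $\f_m \geq \f - C/m$ follows from the Ohsawa-Takegoshi extension theorem, which supplies at each point $z \in X$ a section of controlled $L^2$-norm taking a prescribed value at $z$; a matching upper bound $\f_m \leq \f + (n\log m + C)/m$ comes from the sub-mean value inequality for psh functions applied on balls of radius $\sim m^{-1/2}$. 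Together these force $\f_m \to \f$ pointwise and in $L^1$, and a standard extraction with additive shifts $C_m \downarrow 0$ produces a decreasing subsequence.

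The main obstacle is ensuring that each $\f_m$ (possibly shifted) actually lies in $\PSH(X,\omega)$ rather than merely in $\PSH(X,(1+C/m)\omega)$. On a K\"ahler manifold this is immediate from the identity $dd^c \log \sum_j |\sigma_{m,j}|^2 \geq -m^{-1}\Ric(h_L)$, but on a genuinely hermitian, non-K\"ahler $X$ the torsion of $\omega$ (the failure of $dd^c \omega = 0$) contributes additional $O(1/m)$ error terms when one manipulates $L^2$ estimates with the non-closed volume form $\omega^n$. The strict positivity of the hermitian form $\omega$ is precisely what allows these errors to be absorbed, and this is why the second assertion requires $\omega$ to be hermitian rather than merely semipositive.
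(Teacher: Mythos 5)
The paper does not prove this statement; it is quoted from Demailly [Dem92]. Your first half (convolving the local psh part and gluing the chart-by-chart regularizations with the regularized maximum) is the standard route to the crude smooth approximation and is essentially correct, modulo the usual care needed to force each local piece below the others near the boundary of its chart so that the glued function is globally defined and the family stays decreasing.

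The second half has a genuine gap. You build the approximants from \emph{global} holomorphic sections of $mL$ for a hermitian line bundle $(L,h_L)$, and you invoke Ohsawa--Takegoshi to produce, at each point, a global section of $mL$ with prescribed value and controlled $L^2$-norm. Both steps require positivity of the curvature of $h_L^m e^{-2m\f}$ (suitably twisted by $K_X$), i.e.\ essentially $\Theta(L,h_L)\geq \omega$. On a general compact complex manifold no such $L$ exists --- by Kodaira its existence would force $X$ to be projective --- and $H^0(X,mL)$ may vanish for every line bundle $L$ and every $m$, in which case your $\f_m\equiv-\infty$. Even when sections exist, the inequality $dd^c\f_m\geq -\Theta(L,h_L)$ gives a fixed negative lower bound unrelated to $\omega$, not an $O(1/m)$ loss. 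Demailly's actual argument is local: one takes Bergman kernels of the Hilbert spaces of holomorphic \emph{functions} $f$ on coordinate balls $B_j$ with $\int_{B_j}|f|^2e^{-2m\f}<\infty$ (Ohsawa--Takegoshi is applied on the pseudoconvex ball, where no global positivity is needed), and glues the resulting functions with analytic singularities by the regularized maximum, using exactly the two-sided bounds you state to control the discrepancies on overlaps. Finally, the role of the hermitian hypothesis is not the torsion issue you describe: the glued approximants lie in $PSH(X,(1+\e_m)\omega)$ with $\e_m\downarrow 0$, and strict positivity of $\omega$ is what permits the rescaling $\f_m\mapsto (1+\e_m)^{-1}\f_m$ landing in $PSH(X,\omega)$; this absorption is impossible when $\omega$ is merely semi-positive.
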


\subsubsection{Monge-Amp\`ere measure}

The complex Monge-Amp\`ere measure 
$
 (\omega+dd^c u)^n 
$
 is well-defined for any
$\omega$-psh function $u$ which is {\it bounded}, as follows from
the theory developed by  Bedford-Taylor in bounded pseudoconvex domains of $\C^n$.

If $\beta=dd^c \rho$ is a K\"ahler form that dominates $\omega$ in a local chart $U$, 
the function $u$ is $\beta$-psh in $U$ hence the positive currents
$(\beta+dd^c u)^j$ are well defined for  $0 \leq j \leq n$
by \cite{BT82}. This allows one to make the following definition.

\begin{defi}
The {\em complex Monge-Amp\`ere measure} of $u$ is
$$
(\omega+dd^c u)^n:=\sum_{j=0}^n \left( \begin{array}{c} n \\ j \end{array} \right) (-1)^{n-j}
(\beta+dd^c u)^j \wedge (\beta-\omega)^{n-j}.
$$
\end{defi}

We refer to \cite{DK12} for an adaptation of the main properties of \cite{BT82} to this
 hermitian and global context.

\smallskip

The mixed Monge-Amp\`ere measures 
$(\omega+dd^c u)^j \wedge (\omega+dd^c v)^{n-j}$ are also
well defined for any $0 \leq j \leq n$, and any bounded $\omega$-psh functions $u,v$.

 A basic property we shall use is the following extension of a fundamental result of Bedford-Taylor:
 
 \begin{lem}
 Let $u,v$ be bounded $\omega$-psh functions, then 
 $\max(u,v) \in PSH(X,\omega)$ and
 $$
 1_{\{u<v\}} (\omega +dd^c \max(u,v))^n= 1_{\{u<v\}} (\omega +dd^c v)^n.
 $$
 \end{lem}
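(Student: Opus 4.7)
The plan is to reduce the identity to the classical Bedford--Taylor locality principle on Euclidean open sets, via the expansion formula that \emph{defines} the hermitian Monge--Amp\`ere measure. First I would dispose of the easy assertion $\max(u,v) \in \PSH(X,\omega)$: the maximum of two upper semicontinuous functions is upper semicontinuous (and not identically $-\infty$, since it dominates $u$), and in any chart on which a smooth strictly psh $\rho$ satisfies $\beta := dd^c \rho \geq \omega$, the function $\max(u,v) + \rho = \max(u+\rho,v+\rho)$ is plurisubharmonic as the max of two psh functions, so $\max(u,v)$ is $\omega$-psh on that chart and hence on $X$.

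For the main identity, both sides are positive Borel measures and the statement is local, so I work in a coordinate chart $U \Subset X$ on which I choose a smooth strictly psh $\rho$ with $\beta := dd^c \rho \geq \omega$. Setting $\tilde u := u + \rho$ and $\tilde v := v + \rho$, these are bounded honest plurisubharmonic functions on $U$, and clearly $\{u<v\}\cap U = \{\tilde u < \tilde v\}$, while $\beta + dd^c \max(u,v) = dd^c \max(\tilde u,\tilde v)$ and $\beta + dd^c v = dd^c \tilde v$. Applying the defining expansion
$$
(\omega + dd^c \phi)^n = \sum_{j=0}^n \binom{n}{j} (-1)^{n-j} (\beta + dd^c \phi)^j \wedge (\beta - \omega)^{n-j}
$$
with $\phi = \max(u,v)$ and $\phi = v$ and subtracting, the $j=0$ contributions cancel and I am left with a finite alternating sum of currents
$$
\bigl[(dd^c \max(\tilde u,\tilde v))^j - (dd^c \tilde v)^j\bigr] \wedge (\beta - \omega)^{n-j}, \qquad 1 \leq j \leq n.
$$

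The key input is then the classical Bedford--Taylor locality theorem \cite{BT82}: for bounded psh functions $\tilde u, \tilde v$ on $U$, every $j \geq 1$, and every smooth $(n-j,n-j)$-form $\eta$ on $U$, one has
$$
1_{\{\tilde u < \tilde v\}} (dd^c \max(\tilde u, \tilde v))^j \wedge \eta = 1_{\{\tilde u < \tilde v\}} (dd^c \tilde v)^j \wedge \eta
$$
as signed Borel measures on $U$. Applying this to $\eta := (\beta - \omega)^{n-j}$ term by term annihilates each summand of the alternating sum, proving the identity on $U$, hence globally on $X$ after covering by finitely many such charts.

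The main subtlety I expect to manage is that $\beta - \omega$ is merely a smooth $(1,1)$-form of no definite sign, so the individual summands of the expansion are signed measures rather than positive ones; this is benign since the Bedford--Taylor locality principle holds at the level of wedge products with arbitrary smooth test forms. A secondary technical point, already handled in \cite{BT82}, is that $\{\tilde u < \tilde v\}$ is only Borel, not open: one exhausts it from within by the open sets $\{\tilde u < v_k - \varepsilon\}$, where $(v_k)$ is a decreasing sequence of continuous psh functions converging to $\tilde v$, and then passes to the limit.
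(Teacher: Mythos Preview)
The paper does not actually prove this lemma: it is stated as ``a basic property'' and ``an extension of a fundamental result of Bedford--Taylor'', with the only further comment being that ``the subtle point here is that the set $\{u<v\}$ is not open in the usual sense if $v$ is not continuous, it is merely open for the \emph{plurifine topology}''. There is thus no proof in the paper to compare your proposal against; your argument is a correct way to supply the missing details, and indeed the natural one given how the paper \emph{defines} $(\omega+dd^c\phi)^n$ via the binomial expansion with the local K\"ahler potential $\beta=dd^c\rho$.

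One small remark on your final paragraph: the approximation scheme you sketch (exhausting $\{\tilde u<\tilde v\}$ by the open sets $\{\tilde u<v_k-\varepsilon\}$ with $v_k\searrow\tilde v$ continuous) is not quite right as stated, since $v_k\geq\tilde v$ means $\{\tilde u<v_k-\varepsilon\}$ need not be contained in $\{\tilde u<\tilde v\}$. The clean route---and the one the paper is alluding to---is to invoke directly that $\{\tilde u<\tilde v\}$ is plurifine open and that Bedford--Taylor's Monge--Amp\`ere operators are local in the plurifine topology (this is in the later Bedford--Taylor paper of 1987 rather than \cite{BT82}, strictly speaking). Once you grant that locality, your term-by-term argument with the expansion and the smooth forms $(\beta-\omega)^{n-j}$ goes through without change.
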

 
 The subtle point here is that the set $\{u<v\}$ is not open in the usual sense
 if $v$ is not continuous,
 it is merely open for the {\it plurifine topology}.

\subsection{Uniform bounds on Monge-Amp\`ere volumes}

Let $(X,\omega)$ be a compact hermitian manifold of complex dimension $n$.
   
      \begin{definition}
 We consider
      $$
    v_{-}(\omega):=\inf \left\{ \int_X (\omega+dd^c u)^n , \; u \in\PSH(X,\omega) \cap {L}^{\infty}(X)  \right\}
    $$
   and
    $$
    v_{+}(\omega):=\sup \left\{ \int_X (\omega+dd^c u)^n  , \; u \in\PSH(X,\omega) \cap {L}^{\infty}(X) \right\}.
    $$
    \end{definition}
    
The supremum and infimum in the definition of $v_{+}(\omega)$ and $v_{-}(\omega)$ can be taken  over 
$\PSH(X,\omega) \cap C^{\infty}(X)$ as follows from 
Theorem \ref{thm:demreg} and
Bedford-Taylor's convergence theorem \cite{BT82}.  
    These quantities have been studied in \cite{GL21b}. 
A major open problem is the following.

\begin{pbm} \label{mainpbm}
Understand whether $v_+(\omega) <+\infty$ and/or $v_-(\omega)>0$.
\end{pbm}

 It follows from Stokes theorem
 that  $0<v_-(\omega)=v_+(\omega)=\int_X \omega^n<+\infty$ when $\omega$ is a K\"ahler form.
 The same result holds true  as soon as the hermitian form satisfies 
 $$
 dd^c \omega=0 
 \; \; \text{ and } \; \;
 d\omega \wedge d^c \omega=0,
 $$
 a vanishing condition  introduced by Guan-Li in \cite{GL10}. 
 
 This condition actually characterizes the preservation of Monge-Amp\`ere masses,
  as was observed by  Chiose in \cite{Chi16}.

\begin{theorem} \cite{Chi16}.\label{thm:chiose}
The following properties are equivalent:
\begin{enumerate}
\item $\int_X (\omega+dd^c \f)^n=\int_X \omega^n$
for all $\f \in PSH(X,\omega) \cap L^{\infty}(X)$.
\item $dd^c \omega=0
\; \; \text{ and } \; \;
d \omega \wedge d^c \omega=0$.
\end{enumerate}
\end{theorem}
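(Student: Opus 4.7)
The plan is to prove the two implications separately, with (2) $\Rightarrow$ (1) resting on a Leibniz-type identity and (1) $\Rightarrow$ (2) on a polynomial expansion followed by polarization.

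For (2) $\Rightarrow$ (1), I would first observe that a direct computation using the Leibniz rule $d(\omega^k)=k\,\omega^{k-1}\wedge d\omega$ yields
\[
dd^c(\omega^k)\;=\;k(k-1)\,\omega^{k-2}\wedge d\omega\wedge d^c\omega\;+\;k\,\omega^{k-1}\wedge dd^c\omega,
\]
so the two hypotheses in (2) together force $dd^c(\omega^k)=0$ for every $k\geq 1$. Expanding
$(\omega+dd^c\varphi)^n-\omega^n=\sum_{k=1}^{n}\binom{n}{k}(dd^c\varphi)^k\wedge\omega^{n-k}$
and integrating each summand by parts, using that $(dd^c\varphi)^{k-1}$ is both $d$- and $d^c$-closed, rewrites it as
\[
\int_X (dd^c\varphi)^k\wedge\omega^{n-k}\;=\;\int_X \varphi\,(dd^c\varphi)^{k-1}\wedge dd^c(\omega^{n-k})\;=\;0.
\]
For smooth $\varphi$ this is immediate. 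For bounded $\omega$-psh $\varphi$ I would approximate $\varphi$ by a decreasing sequence of smooth $\omega$-psh functions via Theorem \ref{thm:demreg}, and pass to the limit using Bedford--Taylor's weak continuity of mixed Monge--Amp\`ere operators along decreasing sequences of bounded $\omega$-psh functions.

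For (1) $\Rightarrow$ (2), the idea is to test mass preservation against $\varphi=\varepsilon u$ for an arbitrary smooth $u$ and $\varepsilon$ small enough that $\omega+\varepsilon\,dd^c u>0$. The identity $\int_X(\omega+\varepsilon\,dd^c u)^n=\int_X\omega^n$ is then a polynomial identity in $\varepsilon$ valid on an open interval, which forces
\[
\int_X (dd^c u)^k\wedge\omega^{n-k}\;=\;0\qquad\text{for every smooth }u\text{ and every }k\geq 1.
\]
Polarizing this symmetric $k$-linear form and integrating by parts once to transfer the $dd^c$ onto the powers of $\omega$ gives
$\int_X u_1\,dd^c u_2\wedge\cdots\wedge dd^c u_k\wedge dd^c(\omega^{n-k})=0$
for all smooth $u_1,\dots,u_k$. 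Varying $u_1$ and observing that at any point of $X$ every real $(1,1)$-form arises as the Hessian $dd^c u_j$ of a suitable smooth $u_j$, the non-degeneracy of the wedge pairing $\Lambda^{k-1,k-1}_\R\otimes\Lambda^{n-k+1,n-k+1}_\R\to\Lambda^{n,n}_\R$ forces $dd^c(\omega^{n-k})=0$ for every $k=1,\dots,n-1$. Specializing to $k=n-1$ yields $dd^c\omega=0$; specializing to $k=n-2$ and inserting this into the Leibniz identity above leaves $d\omega\wedge d^c\omega=0$.

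The main obstacle is the last step of (1) $\Rightarrow$ (2): passing from the global vanishing of the integrals $\int_X u_1\,dd^c u_2\wedge\cdots\wedge dd^c u_k\wedge dd^c(\omega^{n-k})$ to the pointwise identity $dd^c(\omega^{n-k})=0$ requires verifying both that smooth Hessians realize all real $(1,1)$-forms at a prescribed point and that the resulting exterior-algebra pairing is perfect. The analogous technical point in (2) $\Rightarrow$ (1), namely extending integration by parts to bounded $\omega$-psh weights, is routine given the regularization of Demailly and the Bedford--Taylor convergence theory already invoked in the paper.
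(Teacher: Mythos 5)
Your argument is correct. Note first that the paper does not prove this statement: it is quoted from Chiose's preprint \cite{Chi16} without proof, so there is no internal argument to compare against. That said, your proof is exactly in the spirit of the computations the paper performs around and after the theorem. The Leibniz identity you use for $dd^c(\omega^k)$ is the one displayed (for $k=n-1$) right after the statement, and your expansion of $\int_X(\omega+dd^c\varphi)^n$ followed by integration by parts is the same mechanism as in the proofs of Theorems \ref{thm:higher-dim} and \ref{thm:plurineghighdim}. Both directions hold up: in $(2)\Rightarrow(1)$ the reduction to smooth $\varphi$ via Theorem \ref{thm:demreg} and Bedford--Taylor convergence is precisely how the paper itself justifies computing $v_\pm$ over smooth potentials; in $(1)\Rightarrow(2)$ the polynomial-in-$\varepsilon$ trick, polarization, and the passage to the pointwise identity are all sound. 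The two facts you flag as the ``main obstacle'' are standard and true: every real $(1,1)$-form at a point is the complex Hessian $dd^c u=i\partial\overline\partial u$ of a (cut-off) quadratic, products of $k-1$ real $(1,1)$-forms span $\Lambda^{k-1,k-1}_{\mathbb R}$ (e.g.\ because strongly positive forms do), and the wedge pairing into $\Lambda^{n,n}_{\mathbb R}$ is perfect. Your route in fact yields the slightly stronger conclusion $dd^c(\omega^j)=0$ for all $1\le j\le n-1$, which is consistent with (and recovers) Proposition 3.2 of the paper.
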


When $n=2$ the vanishing of $d \omega \wedge d^c \omega$ is automatic for bidegree reasons, hence
the preservation of Monge-Amp\`ere volumes is equivalent to $\omega$ being a Gauduchon metric.
In higher dimension this condition is quite restrictive and unstable.

Chiose further observed that the Guan-Li condition is moreover equivalent to 
$$
dd^c \omega \geq 0
\; \; \text{ and } \; \;
d \omega \wedge d^c \omega \geq 0.
$$
 Indeed 
 $$
 dd^c(\omega^{n-1})=(n-1)(n-2) d \omega \wedge d^c \omega\wedge\omega^{n-3}+(n-1)dd^c \omega \wedge \omega^{n-2}
 $$
 while Stokes theorem ensures that $\int_X dd^c(\omega^{n-1})=0$.
 
 \begin{remark} \label{rem:balancedvspluriclosed}
 Note that $X$ satisfying the $\partial\overline\partial$-Lemma is not enough to ensure the volume preservation 
 property $v_-(\omega)=v_+(\omega)=\int_X \omega^n$. Some splitting-type complex structures arising as deformations of the holomorphically-parallelizable Nakamura solvmanifold satisfy the $\partial\overline\partial$-Lemma \cite{angella-kasuya-1, aouv}, but they never admit pluriclosed metrics \cite{aouv}, in particular the Guan-Li conditions are never satisfied
 (see also Proposition \ref{pro:fujiki}).
It is a folkore conjecture that balanced metrics always exist on compact complex manifolds satisfying the $\partial\overline\partial$-Lemma \cite{popovici}, and it is also expected that balanced and pluriclosed metrics cannot coexist on a compact complex manifold unless it admits K\"ahler metrics \cite{fino-vezzoni}. 
If these two conjectures were true, the volume preservation property would never be satisfied by compact complex non-K\"ahler manifolds satisfying the $\partial\overline\partial$-Lemma.
 \end{remark}

 In the sequel we 
 analyze several families of compact complex manifolds admitting a hermitian metric $\omega$ such that
  $dd^c \omega \geq 0$ (resp. $dd^c \omega \leq 0$).
 We can  further expect that $d \omega \wedge d^c \omega \leq 0$ (resp. $d \omega \wedge d^c \omega \geq 0$),
 but we should not expect, in general, that these two forms are both globally positive (resp. negative).
 
 \smallskip
  
 We recall the following properties established in \cite{GL21b}.

\begin{theorem} \cite[Proposition 3.2, Theorem 3.7, Theorem 4.12]{GL21b}.
Let $X$ be a compact complex manifold of dimension $n$ and let $\omega$ be a hermitian metric.
\begin{itemize}
\item The condition $v_+(\omega)<+\infty$ is independent of the choice of the hermitian metric $\omega$,
and it is a bimeromorphic invariant.
\item The condition $v_-(\omega) >0$ is independent of the choice of the hermitian metric $\omega$,
and it is a bimeromorphic invariant.
\item If $\alpha \in H^{1,1}_{BC}(X,\R)$ is a nef  class with $\alpha^n>0$,
then $\alpha$ contains a K\"ahler current (hence $X$ belongs to the Fujiki class ${\mathcal C}$) if
and only if $v_+(\omega)<+\infty$.
\end{itemize}
\end{theorem}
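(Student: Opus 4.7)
\emph{Metric independence.} Given two hermitian metrics $\omega, \omega'$, compactness yields $A \geq 1$ with $A^{-1}\omega' \leq \omega \leq A\omega'$. For $\phi \in \PSH(X, \omega') \cap L^\infty$, the rescaling $\phi/A$ satisfies $dd^c(\phi/A) \geq -A^{-1}\omega' \geq -\omega$, so $\phi/A \in \PSH(X, \omega)$. The pointwise inequality of positive $(1,1)$-forms $\omega' + dd^c\phi \leq A(\omega + dd^c(\phi/A))$ yields $\int_X(\omega' + dd^c\phi)^n \leq A^n \int_X(\omega + dd^c(\phi/A))^n \leq A^n v_+(\omega)$, so $v_+(\omega') \leq A^n v_+(\omega)$; symmetry closes the argument. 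The analog for $v_-$ is more subtle, because this same rescaling only produces upper bounds. My plan is to argue by contradiction: extract $\phi_k \in \PSH(X, \omega')$ with $\int(\omega' + dd^c\phi_k)^n \to 0$, and construct $\psi_k \in \PSH(X, \omega)$ with vanishing MA mass. The obstruction is the positive cross-term $(A\omega - \omega')^n$ that appears when one expands $A^n(\omega + dd^c\phi_k/A)^n = ((A\omega - \omega') + (\omega' + dd^c\phi_k))^n$; one neutralizes it by passing to large multiples $C\phi_k$ and Demailly-regularizing to analytic singularities, after which the MA masses concentrate and can be compared via intersection numbers on a log-resolution.

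\emph{Bimeromorphic invariance.} Given a modification $\pi: \tilde X \to X$ with exceptional set $E$ of zero Lebesgue measure, any $\phi \in \PSH(X, \omega_X) \cap L^\infty$ pulls back to $\pi^*\phi \in \PSH(\tilde X, \pi^*\omega_X) \cap L^\infty$, with $\int_{\tilde X}(\pi^*\omega_X + dd^c\pi^*\phi)^n = \int_X(\omega_X + dd^c\phi)^n$ since $E$ is negligible. Replacing the degenerate form $\pi^*\omega_X$ by the hermitian $\pi^*\omega_X + \vep\omega_{\tilde X}$, Bedford--Taylor continuity of Monge--Amp\`ere along uniformly bounded families lets one pass to $\vep \to 0$; combined with the metric independence above, this transfers finiteness (resp.\ positivity) of $v_\pm(X)$ to $v_\pm(\tilde X)$ in one direction. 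The converse uses a push-forward envelope $\pi_*\tilde\phi := \sup\{u \in \PSH(X, C\omega_X) : \pi^*u \leq \tilde\phi\}$ for suitable $C$, whose MA mass should be comparable to that of $\tilde\phi$. The main obstacle here is that push-forward of currents is generally mass-non-decreasing, so one has to argue that mass concentrated on $E$ contributes controllably, using the analytic-singularities approximation together with the comparison principle on $\tilde X$.

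\emph{K\"ahler current characterization.} The direction ``K\"ahler current in $\alpha \Rightarrow v_+(\omega) < +\infty$'' is immediate: the existence of a K\"ahler current places $X$ in the Fujiki class $\mathcal C$, hence bimeromorphic to a K\"ahler manifold on which $v_+$ is trivially finite (equal to $\int\omega^n$), and bimeromorphic invariance concludes. For the converse, assuming $\alpha$ nef with $\alpha^n > 0$ and $v_+(\omega) < +\infty$, fix $\omega_0 \in \alpha$ smooth and solve the perturbed Monge--Amp\`ere equation $(\omega_0 + \vep\omega + dd^c u_\vep)^n = c_\vep\,\omega^n$ normalized by $\sup_X u_\vep = 0$. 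The hypothesis $v_+(\omega) < +\infty$ supplies uniform $L^\infty$ estimates on $u_\vep$ via an extension of Ko\l odziej's theorem to the hermitian setting, while $\alpha^n > 0$ forces $\liminf_\vep c_\vep > 0$; a weak cluster value $u$ of the $u_\vep$ then gives a K\"ahler current $\omega_0 + dd^c u \in \alpha$ dominating $c^{1/n}\omega$ for some $c>0$. The main obstacle is the uniform positivity of $c_\vep$, which encodes a transcendental holomorphic Morse inequality linking finiteness of $v_+(\omega)$ to the nef-and-big condition $\alpha^n > 0$; this is the technical heart of the argument.
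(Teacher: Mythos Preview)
The paper does not prove this theorem at all: it is quoted verbatim from \cite{GL21b} (Proposition~3.2, Theorem~3.7, Theorem~4.12), with no argument given here. So there is no ``paper's own proof'' to compare against, and your proposal has to be judged on its own merits.

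Your treatment of the metric independence of $v_+$ is correct and is the standard argument. The sketch for the easy direction of the K\"ahler-current item (K\"ahler current $\Rightarrow$ Fujiki $\Rightarrow v_+<\infty$ via bimeromorphic invariance) is also fine.

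There is, however, a genuine gap in your plan for the metric independence of $v_-$. You correctly identify the obstruction: when you rescale $\phi_k\in\PSH(X,\omega')$ to $\phi_k/A\in\PSH(X,\omega)$, the expansion of $(\omega+dd^c(\phi_k/A))^n$ contains the fixed positive term $A^{-n}(A\omega-\omega')^n$, so $\int_X(\omega+dd^c(\phi_k/A))^n$ stays bounded below even though $\int_X(\omega'+dd^c\phi_k)^n\to 0$. But the remedy you propose does not make sense. ``Passing to large multiples $C\phi_k$'' only puts $C\phi_k$ in $\PSH(X,C\omega')$, and after rescaling back into $\PSH(X,\omega)$ you recover exactly $\phi_k/A$; nothing is gained. ``Demailly-regularizing to analytic singularities'' is vacuous here: the $\phi_k$ are \emph{bounded}, so their Demailly approximants are smooth and have no singularities to resolve. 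Consequently there are no ``intersection numbers on a log-resolution'' to invoke, and no mechanism by which ``the MA masses concentrate''. The actual proof in \cite{GL21b} does not proceed along these lines; it uses an auxiliary Monge--Amp\`ere construction (solving $(\omega+dd^c\psi)^n=c\,f\,\omega^n$ for well-chosen densities) together with the comparison principle to transfer smallness of mass between metrics, rather than a direct rescaling.

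Similar remarks apply to the push-forward direction of bimeromorphic invariance and to the lower bound on $c_\varepsilon$ in the K\"ahler-current argument: the outlines you give are plausible program statements, but the phrases ``mass concentrated on $E$ contributes controllably'' and ``$\alpha^n>0$ forces $\liminf c_\varepsilon>0$'' are precisely the nontrivial content of the cited results, and your sketch does not supply the mechanism.
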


Here $H^{1,1}_{BC}(X,\R)$ denotes the first Bott-Chern cohomology group of $X$. The last item is a 
partial answer to an important conjecture of Demailly-Paun \cite{DP04}.
  
  \smallskip
 
 A main goal of this article is to introduce curvature conditions that allow one to 
 partially answer Problem \ref{mainpbm}, and to try and establish them
on various classes of non-K\"ahler manifolds.

      \section{Pluripositive hermitian metrics} \label{sec:stability}

  \subsection{The restriction property} \label{sec:restriction}

We   observe   in this section that     the condition     
$v_+(\omega)<+\infty$ is stable under restriction.

     \begin{thm}  
Let $(X,\omega_X)$ be a compact hermitian manifold and
    let $Y \subset X$ be a closed submanifold of $X$ equipped with a hermitian form $\omega_Y$.
     If $v_+(X,\omega_X)<+\infty$    then $v_+(Y,\omega_Y)<+\infty$.
    \end{thm}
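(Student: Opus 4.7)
The plan is to manufacture, from any bounded smooth $\omega_Y$-psh potential $\varphi$ on $Y$, a bounded $\omega_X$-psh potential on $X$ (up to a dimension-free rescaling) whose global Monge-Amp\`ere mass dominates the Monge-Amp\`ere mass of $\varphi$ on $Y$. By Theorem \ref{thm:demreg} one may assume $\varphi\in\PSH(Y,\omega_Y)\cap C^\infty(Y)$, and we set $k:=\dim_\C Y$ and $p:=n-k$. Cover a tubular neighbourhood of $Y$ in $X$ by finitely many holomorphic coordinate charts $V_\alpha$ in which $Y\cap V_\alpha = \{w_\alpha=0\}$ for some holomorphic submersion $w_\alpha: V_\alpha \to \C^p$, with associated holomorphic projection $\pi_\alpha:V_\alpha\to Y\cap V_\alpha$. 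The local model
\[
\tilde\varphi_\alpha(z) \;:=\; \varphi(\pi_\alpha(z)) + A\,|w_\alpha(z)|^2
\]
then provides a candidate extension: because $\pi_\alpha$ is \emph{holomorphic} one has $dd^c(\varphi\circ\pi_\alpha)=\pi_\alpha^*(dd^c\varphi)\ge -\pi_\alpha^*\omega_Y \ge -K\omega_X$ for a constant $K$ depending only on the chart and not on $\varphi$, and for $A>0$ large enough (independent of $\varphi$) so that $A\,dd^c|w_\alpha|^2$ dominates the ``normal part'' of $\omega_X$ on $V_\alpha$, the function $\tilde\varphi_\alpha$ is $C\omega_X$-psh on $V_\alpha$ with $C$ independent of $\varphi$.

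Using a Richberg-type regularised maximum combined with a suitable partition of unity, one assembles the $\tilde\varphi_\alpha$ into a globally bounded function $\Phi$ on $X$ that is $C\omega_X$-psh with the same constant $C$. In each chart, the pointwise inequality $C\omega_X + dd^c\tilde\varphi_\alpha \ge \pi_\alpha^*(\omega_Y + dd^c\varphi) + A\,dd^c|w_\alpha|^2$ combined with the Newton expansion and the vanishing of $(\pi_\alpha^*\beta)^j$ for $j>k$ yields
\[
(C\omega_X + dd^c\tilde\varphi_\alpha)^n \;\ge\; \binom{n}{k}\,\pi_\alpha^*\bigl((\omega_Y+dd^c\varphi)^k\bigr)\wedge (A\,dd^c|w_\alpha|^2)^p.
\]
Integrating over a subneighbourhood of $Y\cap V_\alpha$ and applying Fubini along the holomorphic fibres of $\pi_\alpha$ produces
\[
\int_X \bigl(C\omega_X + dd^c\Phi\bigr)^n \;\ge\; c(A)\int_Y (\omega_Y + dd^c\varphi)^k,
\]
for a positive constant $c(A)$ independent of $\varphi$. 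Since $\Phi/C \in \PSH(X,\omega_X)\cap L^\infty(X)$, the left-hand side is bounded above by $C^n\,v_+(X,\omega_X)<+\infty$, so the right-hand side is bounded uniformly in $\varphi$. Hence $v_+(Y,\omega_Y)<+\infty$.

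The main technical obstacle lies in the gluing step: the local extensions $\tilde\varphi_\alpha$ do not agree on overlaps --- their differences depend linearly on $\varphi$ through $\varphi\circ\pi_\alpha - \varphi\circ\pi_\beta$ --- so one must carry the regularised maximum through in such a way that (a) the resulting $\Phi$ inherits $C\omega_X$-plurisubharmonicity with a \emph{uniform} constant $C$, and (b) the transversal Monge-Amp\`ere mass $(A\,dd^c|w_\alpha|^2)^p$ is preserved, so that the Fubini lower bound above survives. The quantitative calibration of cutoffs and of the constant $A$ is therefore the crux of the argument.
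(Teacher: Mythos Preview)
Your proposal is a sketch with a self-acknowledged gap, and that gap is genuine. The gluing step is the heart of the matter and, as you note, the differences $\tilde\varphi_\alpha-\tilde\varphi_\beta$ on overlaps involve $\varphi\circ\pi_\alpha-\varphi\circ\pi_\beta$. Even after normalising $\sup_Y\varphi=0$, there is no uniform lower bound on $\varphi$, so these differences are not controlled by a constant independent of $\varphi$; thus a regularised-maximum construction will not produce a $C\omega_X$-psh function with $C$ independent of $\varphi$ by the usual calibration-of-bump-functions argument. Moreover, even if one somehow obtained such a $\Phi$, the pointwise lower bound on $(C\omega_X+dd^c\Phi)^n$ that you need for the Fubini step is a \emph{local} statement about $\tilde\varphi_\alpha$, not about the glued $\Phi$: a regularised maximum does not inherit the transversal Hessian $A\,dd^c|w_\alpha|^2$ on the set where another branch wins. (Also, a partition of unity destroys plurisubharmonicity, so it cannot be combined with a regularised maximum in the way you suggest.)

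The paper sidesteps both difficulties by separating the two roles you are trying to make $\Phi$ play. First, it invokes an extension theorem (\cite[Proposition~2.1]{CGZ13}) to produce a \emph{global} smooth $\phi\in\PSH(X,\omega_X)$ with $\phi|_Y=\varphi$; this black-boxes your gluing problem entirely. Second, instead of a tubular Fubini argument, it fixes once and for all a quasi-psh function $\psi_Y\in\PSH(X,\omega_X)$ with analytic singularities along $Y$ such that $(\omega_X+dd^c\psi_Y)^{n-k}\ge\delta_0[Y]$ as currents. Then
\[
\int_Y(\omega_Y+dd^c\varphi)^k=\int_X(\omega_X+dd^c\phi)^k\wedge[Y]
\le\delta_0^{-1}\int_X(\omega_X+dd^c\phi)^k\wedge(\omega_X+dd^c\psi_Y)^{n-k},
\]
and one concludes by approximating $\psi_Y$ by $\psi_j=\max(\psi_Y,-j)$ and noting that the mixed Monge--Amp\`ere mass is bounded by $2^n\int_X(\omega_X+dd^c u_j)^n\le 2^n v_+(X,\omega_X)$ for $u_j=\tfrac12(\phi+\psi_j)$. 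The point is that $\psi_Y$ is fixed independently of $\varphi$, so no uniformity issue arises.
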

    
    \begin{proof}
    Since the finiteness of $v_+$ is independent of the choice of a hermitian metric, we work here
    with $\omega_Y=(\omega_X)_{|Y}$. 
    
    It follows from Theorem \ref{thm:demreg}
  that one can
    approximate $\f \in PSH(Y,\omega_Y)$ by a decreasing
    sequence of smooth strictly $\omega_Y$-psh functions. Since the complex Monge-Amp\`ere 
    operator is continuous along decreasing sequences \cite{BT82}, it suffices to establish a uniform
    bound from above on $\int_Y (\omega_Y+dd^c \f)^k$, where
    $\f$ is smooth and strictly $\omega_Y$-psh and $k=\dim_{\C}Y$.
    
    It follows from \cite[Proposition 2.1]{CGZ13} that there exists a smooth
   extension $\phi \in PSH(X,\omega_X)$ with $\phi_{|Y}=\f$.
   It is classical (see e.g. \cite[Lemma 2.1]{DP04}) that one can find 
   a function $\p_Y \in PSH(X,\omega_X)$ which is smooth in $X \setminus Y$,
   with analytic singularities along $Y$, and such that 
   $$
   (\omega_X+dd^c \p_Y)^{n-k} \geq \delta_0 [Y],
   $$
   for some   $\delta_0>0$, where $[Y]$ denotes the current
   of integration along $Y$. We infer
   \begin{eqnarray*}
   \int_Y (\omega_Y+dd^c \f)^k 
   &=& \int_X (\omega_X+dd^c \phi)^k \wedge [Y] \\
   & \leq & \delta_0^{-1} \int_X (\omega_X+dd^c \phi)^k \wedge (\omega_X+dd^c \p_Y)^{n-k} \\
   &=& \lim_{j \rightarrow +\infty} \delta_0^{-1} \int_X (\omega_X+dd^c \phi)^k \wedge (\omega_X+dd^c \p_j)^{n-k},
   \end{eqnarray*}
   where $\p_j=\max(\p_Y,-j) \in PSH(X,\omega_X) \cap L^{\infty}(X)$. 
   
   Observe now that   $u_j=\frac{\phi+\p_j}{2} \in PSH(X,\omega_X) \cap L^{\infty}(X)$ with
   $$
   (\omega_X+dd^c \phi)^k \wedge (\omega_X+dd^c \p_j)^{n-k} \leq 2^n (\omega_X+dd^c u_j)^n,
   $$
  and $\int_X (\omega_X+dd^c u_j)^n \leq v_+(X,\omega_X)$, hence
  $$
  v_+(Y,\omega_Y)=\sup_{\f}  \int_Y (\omega_Y+dd^c \f)^k  \leq 2^n \delta_0^{-1} v_+(X,\omega_X)<+\infty,
  $$
  completing the proof.
    \end{proof}

      \subsection{Controlling $v_+$}
   
  We propose here various curvature conditions
  on a hermitian metric $\omega$  that ensure $v_+(\omega)<+\infty$.

     \begin{theorem} \label{thm:higher-dim}
  	Let  $X$ be a compact complex manifold of dimension $n$.
  	Assume there exists a hermitian metric $\omega$ and $\e>0$ such that 
  	\begin{itemize}
  	\item $dd^c \omega \geq 0$ and
  	\item $dd^c \omega^q \geq \e \omega \wedge dd^c \omega^{q-1}$ for $2 \leq q \leq n-2$.
  	\end{itemize}
  	Then  $v_+(\omega)<+\infty$.
  \end{theorem}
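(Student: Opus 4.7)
The plan is to normalize $\sup_X\varphi=0$ (so $\varphi\le 0$ throughout), reduce to smooth $\varphi$ by Theorem~\ref{thm:demreg}, and show inductively that the mixed volumes
\[
J_q(\varphi):=\int_X\omega_\varphi^q\wedge\omega^{n-q},\qquad 0\le q\le n,
\]
are uniformly bounded in $\varphi$. Since $J_0=\int_X\omega^n$ is a constant and $J_n=\int_X\omega_\varphi^n$ is the target quantity, this is enough.

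By Stokes' theorem, together with the identities $d\omega_\varphi=d\omega$ and $d^c\omega_\varphi=d^c\omega$ (since $dd^c\varphi$ is both $d$- and $d^c$-closed), one has
\[
J_{q+1}(\varphi)-J_q(\varphi) = \int_X\varphi\cdot dd^c\bigl(\omega_\varphi^q\wedge\omega^{n-q-1}\bigr),
\]
and the Leibniz rule expresses $dd^c(\omega_\varphi^q\wedge\omega^{n-q-1})$ as an explicit non-negative integer combination of wedges of $\omega_\varphi$ and $\omega$ with either $A:=dd^c\omega\ge 0$ (by hypothesis) or $B:=d\omega\wedge d^c\omega$ (sign indefinite). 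The $A$-pieces give non-positive contributions; the hypothesis is used to control the $B$-pieces. From the identity $dd^c\omega^q = q(q-1)\,\omega^{q-2}\wedge B + q\,\omega^{q-1}\wedge A$, the inequality $dd^c\omega^q\ge\varepsilon\,\omega\wedge dd^c\omega^{q-1}$ for $2\le q\le n-2$ rearranges to $\omega^{q-2}\wedge B\ge -c_q\,\omega^{q-1}\wedge A$ with positive constants $c_q$ depending on $\varepsilon, n, q$; the remaining case $p=n-3$ (appearing in $dd^c\omega^{n-1}$, i.e.\ when $q=0$ in the induction) follows by wedging the $q=n-2$ inequality with $\omega$. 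Every $B$-piece $\omega_\varphi^a\wedge\omega^p\wedge B$ in the Leibniz expansion has $p\le n-3$, so wedging the pointwise bound with $\omega_\varphi^a\ge 0$ and multiplying by $\varphi\le 0$ yields
\[
J_{q+1}(\varphi)-J_q(\varphi) \le C\int_X(-\varphi)\cdot S_q,
\]
where $S_q$ is a positive combination of wedges $\omega_\varphi^a\wedge\omega^b\wedge A$ with $a\le q$, $a+b=n-1$.

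Using $A\le K\omega$ pointwise, the total mass of $S_q$ is bounded by $K'\max_{a\le q}J_a$, which is under control by the inductive hypothesis; a Chern--Levine--Nirenberg-type estimate on the compact hermitian manifold $X$ (using that $\|\varphi\|_{L^1}$ is uniformly bounded for normalized quasi-psh $\varphi$) then bounds $\int_X(-\varphi)\cdot S_q$ by a multiple of $1+\max_{a\le q}J_a$. Thus $J_{q+1}$ is bounded, the induction closes, and $v_+(\omega)=\sup_\varphi J_n<+\infty$.

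\emph{Main obstacle.} The delicate point is the combinatorial bookkeeping between the pointwise $B$-bounds derived from the hypothesis (directly for $p\le n-4$, and by wedging with $\omega$ for the top case $p=n-3$) and the explicit binomial coefficients in the Leibniz expansion of $dd^c(\omega_\varphi^q\wedge\omega^{n-q-1})$; the positive slack $\varepsilon>0$ is exactly what keeps the constants $c_p$ finite and permits the absorption step. A secondary technical ingredient is the Chern--Levine--Nirenberg-type bound in the hermitian (non-K\"ahler) setting, needed to control $\int_X(-\varphi)\cdot T$ for positive but not necessarily closed $(n,n)$-currents $T$ depending on $\omega_\varphi$.
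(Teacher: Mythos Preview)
There is a genuine gap in the final step. Your ``Chern--Levine--Nirenberg-type estimate'' asserts, after using $A\le K\omega^2$, that
\[
\int_X(-\varphi)\,\omega_\varphi^{a}\wedge\omega^{n-a}\ \le\ C\bigl(1+\max_{a'\le q}J_{a'}\bigr)
\]
uniformly over normalized $\omega$-psh $\varphi$. This is false. The left-hand side is (up to constants) an Aubin--Mabuchi partial energy, and it is \emph{not} uniformly bounded on $\{\varphi\in\PSH(X,\omega)\cap C^\infty:\ \sup_X\varphi=0\}$: take smooth $\varphi_j$ decreasing to an $\omega$-psh function with a logarithmic pole; then $\int_X(-\varphi_j)\,\omega_{\varphi_j}^{a}\wedge\omega^{n-a}\to+\infty$ for suitable $a$, while $\|\varphi_j\|_{L^1(\omega^n)}$ stays bounded and (in the K\"ahler case, where your claim specializes to a fixed constant bound) all $J_{a'}$ equal $\int_X\omega^n$. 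The $L^1$-bound on $\varphi$ only controls $\int_X(-\varphi)\,\Theta$ when $\Theta$ is a \emph{fixed smooth} $(n,n)$-form; here $\Theta$ depends on $\omega_\varphi$, and no such bound is available. Note also that in your induction the $B$-terms in $dd^c(\omega_\varphi^{q}\wedge\omega^{n-q-1})$ include the piece with $a=q$ (both $d$ and $d^c$ landing on $\omega$-factors), so at step $q$ you genuinely need to control $\int_X(-\varphi)\,\omega_\varphi^{q}\wedge\omega^{n-q}$, which is not covered by any inductive hypothesis.

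The paper's argument is organized so that this obstacle never arises. One expands $\int_X(\omega+a\,dd^cu)^n$ in powers of $dd^cu$ rather than of $\omega_u$; the $p=1$ term is $\int_X u\,dd^c\omega^{n-1}$, bounded by smoothness and the $L^1$-estimate, and this is the \emph{only} place where a bare factor of $u$ survives. For $p\ge2$ two integrations by parts give
\[
\int_X\omega^{n-p}\wedge(dd^cu)^p=-\int_X du\wedge d^cu\wedge(dd^cu)^{p-2}\wedge dd^c\omega^{n-p},
\]
so the integrand contains $du\wedge d^cu\ge0$ and powers of $dd^cu=\omega_u-\omega$, but no $u$. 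One then writes $(\omega_u-\omega)^{p-2}$ binomially, splits according to the parity of the power of $\omega$, uses the hypothesis $dd^c\omega^q\ge\varepsilon\,\omega\wedge dd^c\omega^{q-1}$ to compare consecutive terms, and chooses the rescaling parameter $a>0$ small enough so that the whole sum is $\le0$. The smallness of $a$ is essential: it is what makes the alternating sum nonpositive and replaces your attempted absolute bound. Your pointwise rearrangement of the hypothesis into $\omega^{p}\wedge B\ge -c_p\,\omega^{p+1}\wedge A$ is correct and close in spirit, but without the rescaling it leaves you with integrals $\int_X(-\varphi)\,\omega_\varphi^{a}\wedge(\text{smooth positive})$ that cannot be closed inductively.
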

  
  In complex dimension $n=3$, the hypothesis boils down to $dd^c \omega \geq 0$.
  We shall provide several examples of manifolds admitting   such  pluripositive
  hermitian metrics in the sequel.
Higher dimensional examples 
satisfying the conditions of Theorem \ref{thm:higher-dim} 
 will be presented in  Example \ref{ex:v+-complex-parallelizable-higher-dim} and Example \ref{exa:higher-dim-plurisigned}.

    \begin{proof}
   When $n=3$ this observation is due to Chiose \cite[Question 0.8]{Chi16}.
  We include a proof 
   as a warm up for the higher dimensional case.
  It follows from Stokes theorem that $\int_X (dd^c u)^3=0$
   for any $u \in \PSH(X,\omega)\cap C^{\infty}(X)$, hence
  \[
  	\int_X (\omega+dd^c u)^3=  \int_X \omega^3
  	+3 \int_X \omega^2 \wedge dd^c u+3 \int_X \omega \wedge (dd^c u)^2.
  	\]
    Integrating by parts we see that
    $$
  \left|   \int_X \omega^2 \wedge dd^c u \right| =\left| \int_X u  {\, dd^c \omega^2} \right|
  \leq B \int_X |u| \omega^3 \leq C
    $$
   by compactness (see \cite[Proposition 8.4]{GZbook}), as we can normalize $u$ by $\sup_X u=0$. On the other hand
   $$
   \int_X \omega \wedge (dd^c u)^2=\int_X -dd^c \omega \wedge du \wedge d^c u \leq 0
   $$
   whenever $dd^c \omega \geq 0$. The result follows.
   
   \smallskip
   
   We now treat the case $n \geq 4$.
   Observe that it suffices to deal with $a\omega$-psh functions, where $0<a \leq 1$ is an arbitrarily small fixed constant.
   Indeed if $u \in PSH(X,\omega) \cap L^{\infty}(X)$, then
   $au \in PSH(X,a\omega) \cap L^{\infty}(X)$ with
   $$
   a^n \omega_u^n \leq ((1-a) \omega+a \omega_u)^n ,
   $$
    hence
   $$
   v_+(\omega) \leq a^{-n} \sup \left\{ \int_X (\omega+a dd^c u)^n, \, u \in PSH(X,\omega) \cap L^{\infty}(X) \right\}.
   $$
   We thus fix  $a>0$ and $u\in PSH(X,\omega)$ with $\sup_X u=0$.
Stokes theorem yields
	\begin{flalign*}
		\int_X (\omega+ a dd^c u)^n &=  \sum_{0\leq p \leq n} \binom{n}{p} a^{p} \int_X \omega^{n-p} \wedge (dd^c u)^p\\
		&= \int_X \omega^n + na\int_X \omega^{n-1}\wedge dd^c u +  \sum_{2\leq p \leq n-1} \binom{n}{p} a^{p} \int_X \omega^{n-p} \wedge (dd^c u)^p\\
		&\leq C_1 - \sum_{2\leq p \leq n-1} \binom{n}{p} a^{p}  \int_X  du \wedge d^cu \wedge (\omega_u-\omega)^{p-2} \wedge dd^c (\omega^{n-p})\\
		&=C_1 + S. 
	\end{flalign*}
	We   decompose the sum $S$ as follows,
	\begin{flalign*}
		S &= \mathop{\sum_{2\leq p \leq n-1}}_{0\leq k\leq p-2}  B_{k,p} a^{p}(-1)^{k+1}  \int_X  du \wedge d^cu \wedge \omega_u^{p-2-k} \wedge \omega^k \wedge dd^c (\omega^{n-p})\\
		&= S_1 +S_2,
	\end{flalign*}
	where $B_{k,p} = \binom{n}{p} \binom{p-2}{k}$,  $S_1$ is the sum for $k$ even and $S_2$	 is the one for $k$ odd.  
	 Using the assumption on $dd^c \omega^{n-p}$ we obtain
	\begin{flalign*}
	S_1&=	-\mathop{\sum_{2\leq p\leq  n-2}}_{0\leq k =2l\leq p-2}  B_{k,p} a^{p}  \int_X  du \wedge d^cu \wedge \omega_u^{p-2-2l} \wedge \omega^{2l} \wedge dd^c (\omega^{n-p})\\
		&\leq-\mathop{\sum_{2\leq p\leq  n-2}}_{0\leq k =2l\leq p-2}  B_{k,p} a^{p}\varepsilon  \int_X  du \wedge d^cu \wedge \omega_u^{p-2-2l} \wedge \omega^{2l+1} \wedge dd^c (\omega^{n-p-1}),
	\end{flalign*}
	while
	\begin{flalign*}
		S_2 &= \mathop{\sum_{3\leq p\leq  n-1}}_{0\leq k =2l+1\leq p-2}  B_{k,p} a^{p}  \int_X  du \wedge d^cu \wedge \omega_u^{p-3-2l} \wedge \omega^{2l+1} \wedge dd^c (\omega^{n-p})\\
		&=\mathop{\sum_{2\leq p\leq  n-2}}_{{ 0\leq k =2l+1\leq p-1 }}  B_{k,p+1} a^{p+1}  \int_X  du \wedge d^cu \wedge \omega_u^{p-2-2l} \wedge \omega^{2l+1} \wedge dd^c (\omega^{n-p-1}),
	\end{flalign*}
	as follows from changing   $p$ in $p-1$. Now $S_1+S_2=$
	$$
	 =\mathop{\sum_{2\leq p\leq  n-2}}_{0\leq k =2l+1\leq q-1}  
	(-B_{k,p} \varepsilon+B_{k,p+1} a )a^{p}
	\int_X  du \wedge d^cu \wedge \omega_u^{p-2-2l} \wedge \omega^{2l+1} \wedge dd^c (\omega^{n-p-1}) \leq 0
	$$
	if  $a$ is small enough, which yields $S_1+ S_2\leq 0$ hence $v_+(\omega) \leq a^{-n}C_1$.
\end{proof}

     \subsection{Pluripositive hermitian metrics} \label{sec:pluripositivecurrents}
     
     \begin{defi}
     A hermitian metric $\omega$ is {\em pluripositive} if  $dd^c\omega\geq0$,
      {\em plurinegative} if $dd^c\omega\leq 0$,  and pluriclosed if $dd^c \omega=0$.
\end{defi}

Pluriclosed metrics are also often called SKT (strongly K\"ahler with torsion) in the literature;
a manifold $X$ is called SKT if it admits a SKT hermitian metric.

\smallskip

   The existence of a pluripositive hermitian metric $\omega$ 
   is a condition that is stable under blow-ups with smooth centers.
   Indeed   if $\pi:Y \rightarrow X$ is the blow up of $X$ with smooth center $Z \subset X$, a hermitian metric
     on $Y$ is obtained by considering 
     $$     \omega_Y=\pi^* \omega_X-\e \theta_Z,     $$
     where $\omega_X$ is  a hermitian metric in $X \setminus Z$ with poles along $Z$,
     $\theta_Z$ is a closed form cohomologous to the current of integration along the exceptional
     divisor $\pi^{-1}(Z)$, and $0<\e$ is small.
     Thus $dd^c \omega_Y=\pi^* dd^c \omega_X$ has the same sign as that of $dd^c \omega_X$.
     
     \smallskip
   
   However this condition is not stable under
   modifications (see Proposition \ref{pro:fujiki}),
and there are   obstructions to the existence of 
     pluripositive hermitian metrics:

    \begin{theorem} \label{thm:hbd+}
    The following properties are equivalent.
    \begin{itemize}
    \item There exists a hermitian form $\omega$ such that $dd^c \omega \geq 0$.
    \item There exists no positive  current $\tau$ of bidimension $(2,2)$ such that $S=dd^c \tau\leq 0$
    with $S \neq 0$.
    \end{itemize}
    \end{theorem}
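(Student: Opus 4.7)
The plan is to prove the two implications separately: $(1)\Rightarrow(2)$ by a Stokes computation and $(2)\Rightarrow(1)$ by a Hahn-Banach separation of convex cones in the space of currents, in the spirit of \cite{Mic82,HL83}.

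For $(1)\Rightarrow(2)$, let $\omega$ be hermitian with $dd^c\omega\ge 0$ and let $\tau\ge 0$ of bidimension $(2,2)$ satisfy $dd^c\tau\le 0$. Integration by parts yields
\[
0\;\le\;\int_X dd^c\omega\wedge\tau\;=\;\int_X\omega\wedge dd^c\tau\;\le\;0,
\]
so both integrals vanish. Since $\omega$ is strictly positive while $-dd^c\tau$ is a positive $(n-1,n-1)$-current, this forces $dd^c\tau=0$, i.e.\ $S=0$.

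For $(2)\Rightarrow(1)$, I propose to work in the locally convex space of currents of bidimension $(1,1)$ on $X$ and consider the two convex cones
\[
K_+ := \{\text{positive }(n-1,n-1)\text{-currents on }X\},\qquad K_- := \{-dd^c\tau : \tau\ge 0\text{ of bidim }(2,2)\}.
\]
Hypothesis (2) is precisely the statement that $K_+\cap K_- = \{0\}$, because any nonzero common element would be a positive current of the form $-dd^c\tau$ with $\tau\ge 0$, $dd^c\tau\le 0$ and $dd^c\tau\ne 0$, contradicting (2). The cone $K_+$ has nonempty interior, containing for example $\eta^{n-1}$ for any hermitian $\eta$, and $0\notin\mathrm{int}(K_+)$, so $\mathrm{int}(K_+)$ and $K_-$ are disjoint convex sets with the first being open. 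The geometric Hahn-Banach theorem then produces a nonzero continuous linear functional $\phi$ with $\phi>0$ on $\mathrm{int}(K_+)$ and $\phi\le 0$ on $K_-$.

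By the reflexive duality between currents and smooth forms on the compact manifold $X$, $\phi$ is given by $\phi(T)=\langle T,\omega\rangle$ for a smooth real $(1,1)$-form $\omega$. Strict positivity of $\phi$ on strongly positive currents forces $\omega>0$ pointwise, i.e.\ $\omega$ is hermitian, by the standard bipolar characterisation; the condition $\phi\le 0$ on $K_-$ reads, after Stokes, $\langle\tau,dd^c\omega\rangle\ge 0$ for every positive $\tau$ of bidim $(2,2)$, equivalently $dd^c\omega\ge 0$ as a weakly positive $(2,2)$-form. Thus $\omega$ is the desired pluripositive hermitian metric. The subtle point to handle with care is the choice of Hahn-Banach setup: a naive separation carried out in the space of smooth $(1,1)$-forms---between the hermitian cone and $\{\alpha:dd^c\alpha\ge 0\}$---produces, after factoring through $dd^c$, a candidate current $\tau$ with $dd^c\tau\le 0$ and $dd^c\tau\ne 0$, but $\tau$ is only guaranteed positive against $(2,2)$-forms in the image of $dd^c$; upgrading to full positivity would require a Krein-type cofinality of $dd^c(\mathcal{A}^{1,1})$ in the positive cone that can fail precisely on the manifolds without pluripositive hermitian metric. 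Working dually in the current space sidesteps this difficulty, as the bipolar theorem directly produces $\omega$ with both the hermitian and the pluripositive properties.
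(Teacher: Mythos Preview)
Your argument for $(1)\Rightarrow(2)$ is exactly the paper's.

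For $(2)\Rightarrow(1)$ there is a genuine gap. You work in the space of bidimension $(1,1)$ currents with the topology whose dual is the space of smooth $(1,1)$-forms (this is what you need for the identification $\phi(T)=\langle T,\omega\rangle$ with $\omega$ smooth), i.e.\ the weak-$*$ topology. In that topology the positive cone $K_+$ has \emph{empty} interior: every weak-$*$ neighbourhood of a point contains an affine subspace of finite codimension, while $K_+$ is a proper cone containing no line. Concretely, for $T_0=\eta^{n-1}$ and any Dirac-type positive current $\delta$ supported at a point, $T_0-t\delta$ fails to be positive for every $t>0$, so $K_+-T_0$ is not absorbing and $T_0\notin\mathrm{int}(K_+)$. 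The geometric Hahn--Banach theorem in the form you invoke therefore does not apply. (Switching to the space of smooth $(n-1,n-1)$-forms fixes the interior problem but then the separating functional is only a $(1,1)$-current, not a smooth hermitian form.)

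The paper's proof repairs exactly this point in the Sullivan/Harvey--Lawson fashion: rather than relying on an interior, one takes a \emph{compact} base of the cone of negative currents,
\[
\mathcal C \;=\; \Bigl\{\, S\le 0 \text{ of bidimension }(1,1)\ :\ \textstyle\int_X S\wedge\omega_0=-1 \,\Bigr\},
\]
for a fixed hermitian $\omega_0$, and separates this compact convex set from the closed cone $F=\{dd^c\tau:\tau\ge 0\}$. Hahn--Banach for a compact convex set versus a disjoint closed convex set holds in any locally convex space without an interior hypothesis, and the resulting functional is represented by a smooth real $(1,1)$-form $\omega$; the condition $\Phi\ge 0$ on $F$ gives $dd^c\omega\ge 0$, while strict negativity on $\mathcal C$ and homogeneity give $\omega>0$. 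Your diagnosis in the last paragraph about the pitfall of separating in the smooth $(1,1)$-forms is correct, but the cure is compactness of a normalized slice, not nonempty interior.
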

    
    When the complex dimension is $n=3$, this shows in particular that pluripositive and plurinegative hermitian
    forms can not coexist (see Corollary \ref{cor:obstruction}).
    
      \begin{proof}
 Observe first that the two objects cannot coexist on $X$. Indeed if 
 $\omega$ is a hermitian form such that $dd^c \omega \geq 0$ and 
 $\tau$ is a positive  current of bidimension $(2,2)$ such that $dd^c \tau \leq 0$, we obtain
 $$
 0 \leq \int_X dd^c \omega \wedge \tau=\int_X \omega \wedge dd^c \tau \leq 0,
 $$
 which forces $dd^c \tau=0$ since $\omega$ is a hermitian form.
  Conversely consider 
 $$
 {\mathcal C}:=\left\{\text{negative 
 currents } S   \text{ of bidimension }(1,1)
\text{ with } \int_X S \wedge \omega=-1 \right\}.
 $$
 This is a compact convex set for
 the weak topology of currents.
 We set 
$$
F:=\left\{\text{currents } S=dd^c \tau, 
\text{ where } \tau \geq 0 \text{ has  bidimension } (2,2) \right\}.
$$
 This is a closed set for the weak topology.
  There exists no positive  current $\tau$ of bidimension $(2,2)$ such that $S=dd^c \tau\leq 0$
    with $S \neq 0$ if and only if the sets ${\mathcal C}$ and $F$ are disjoints.
    If such is the case, it follows from Hahn-Banach theorem that we can find a continuous functional
    $\Phi$ on the set of bidimension $(1,1)$ currents that is 
    semi-positive on $F$ and strictly negative on ${\mathcal C}$.
    By deRham duality the functional $\Phi$ is defined by a form $\omega$ of bidegree $(1,1)$. Now
    \begin{itemize}
    \item $\Phi \geq 0$ on $F$ is equivalent to $dd^c \omega \geq 0$, while
    \item $\Phi<0$ on ${\mathcal C}$ is equivalent to $\omega$ being hermitian,
    \end{itemize}
    as follows from a rescaling argument.
   \end{proof}

   \subsection{Twistor spaces} \label{sec:twistor}
   
  Twistor spaces provide a large classe of examples of compact non-K\"ahler manifolds    
which admit a pluripositive hermitian metric.
    
\subsubsection*{Dimension 4}
Let $(M,g)$ be a 
    compact oriented riemannian $4$-manifold.
    The vector bundle $\Lambda^2T^*M$ of $2$-forms can be decomposed as a direct sum
    $
    \Lambda^2T^*M=\Lambda_+ \oplus \Lambda_-,
    $
    where $\Lambda_{\pm}$ denotes the eigenspace of the Hodge $\star$-operator 
    corresponding to the $\pm 1$-eigenvalues of $\star$
    (selfdual and anti-selfdual forms).
    
    The riemannian curvature operator
    $R: \Lambda^2T^*M \rightarrow \Lambda^2T^*M  $ can be decomposed under the action of
    the group of special isometries $SO(4)$ as
    $$
    R=\frac{s}{6}Id+W^-+W^+ +\stackrel{\circ}{r},
    $$
    where $s$ is the scalar curvature, $\stackrel{\circ}{r}$ is the trace free Ricci curvature,
  and  $W^{\pm}$ are the trace free endomorphisms of $\Lambda^{\pm}$.
   The manifold $M$ is called ASD (anti-selfdual) if $W^+=0$;
   this definition is conformally invariant \cite{AHS78}.
   A famous result
  of Taubes \cite{Taub92} provides many examples of such ASD manifolds.
  
  \begin{defi}
  The twistor space $X=X(M,[g])$ of $(M,[g])$ is the total space of the sphere bundle
  of self dual $2$-forms.
  \end{defi}

 There is a natural almost complex structure on $X$ which is integrable if and only if 
 $M$ is ASD \cite{AHS78}. In this case $X$ is a  compact complex   manifold of dimension $3$
 which is never K\"ahler unless $M=S^4$ is the sphere
 (in which case $X=\C\PP^3$), or $M=\C\PP^2$ (in which case 
 $X$ is the flag space of $\C^3$), see \cite{Hit81}.
 
 Despite being non-K\"ahler, twistor spaces have a lot of rational curves, in particular
all fibers $F \sim \PP^1$ of the smooth submersion $\pi:X \rightarrow M$. Note that there also is
a holomorphic projection $\pi:X \rightarrow \PP^1$.

\subsubsection*{Dimension $4n$}

The previous construction generalizes as follows.
      Let $(M,g,D)$ be a quaternionic K\"ahler manifold of  dimension $4n$, i.e.
    an oriented complete $4n$-dimensional riemannian manifold $(M,g)$
    whose holonomy group is contained in the product $Sp(1)Sp(n)$ of quaternionic unitary groups. Such a manifold admits a rank $3$ subbundle $D \subset End(TM)$ invariant by the Levi-Civita connection.
    
      \begin{defi}
  The twistor space $X=X(M,g,D)$ of $(M,g,D)$ is the   bundle of spheres of radius $\sqrt{2}$ of $D$.
  \end{defi}
  
  This is a locally trivial bundle with fibre $S^2$ and structure group $SO(3)$.
  It can be endowed with a natural metric G and an almost complex structure $J$ that is integrable \cite[Theorem 4.1]{Sal82}. Thus $(X,J)$
  is a complex manifold of complex dimension $2n+1$.
    
    \smallskip
    
    When $(M,g)$ is hyperk\"ahler, i.e. when the holonomy   group is contained in the quaternionic unitary group $Sp(n)$, it turns out that
    $(M,g)$ admits three global 
    $g$-orthogonal integrable K\"ahler structures
    $I,J,K$ such that 
    $IJ=-JI=K$.  We thus obtain a pencil of complex structures
    $$
    X(M,g,D) \longrightarrow \PP^1
    $$
    which is integrable. It is called the Calabi family of $(M,g,D)$.

\subsubsection*{Pluripositive hermitian metrics}

 Twistor spaces admit smooth hermitian $(1,1)$-forms $\omega$ that are balanced 
 (i.e. $d \omega^{n-1}=0$, see \cite{Mic82} and \cite[Proposition 4.5]{KV98}). 
 There is a natural hermitian form $\omega$  
 whose curvature has been computed by Kaledin-Verbitsky \cite[Proposition 8.15]{KV98} and
  Deschamps-LeDu-Mourougane \cite[Corollary 5.6]{DLM17}:
    
    \begin{theorem} \label{thm:twistor}
    Let $(M,g,D)$ be a quaternionic K\"ahler manifold of dimension $4n$
    with constant scalar curvature $s \leq 0$. Then 
    the natural hermitian form $\omega$ on the twistor space $ X(M,g,D) $
    is pluripositive $dd^c \omega \geq 0$.
    \end{theorem}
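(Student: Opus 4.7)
The plan is to invoke the explicit computation of the curvature of the natural hermitian form $\omega$ on the twistor space carried out in \cite[Proposition 8.15]{KV98} and \cite[Corollary 5.6]{DLM17}, and then read off the sign of $dd^c \omega$ under the hypothesis $s \leq 0$. There is essentially no new computational work to do beyond interpreting those formulas and matching conventions; the substantive work is already in the references.

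First, I would set up the twistor geometry carefully. The projection $\pi \colon X \to M$ induces a splitting $TX = H \oplus V$ coming from the Levi-Civita connection of $g$, and the natural hermitian form decomposes accordingly as $\omega = \omega_H + \omega_V$, where $\omega_V$ restricts to (a multiple of) the Fubini-Study form on each $\PP^1$-fiber and $\omega_H$ is built from $\pi^* g$ together with the tautological almost complex structure on $H$. Purely vertical contributions to $dd^c \omega$ vanish since $\omega_V$ is K\"ahler on each fiber, while the purely horizontal and mixed contributions must be reorganized using the quaternionic-K\"ahler identities on $M$ (exploiting the fact that a quaternionic-K\"ahler manifold of real dimension $\geq 8$ is Einstein, so the scalar-curvature assumption is natural).

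Second, I would quote the resulting identity from \cite{KV98, DLM17}. It takes the schematic form
\[
dd^c \omega \;=\; \Phi \,-\, s \cdot \Psi,
\]
where $\Phi$ and $\Psi$ are semi-positive $(2,2)$-forms canonically built from the twistor data: $\Phi$ is a non-negative combination of squares of the horizontal-distribution curvature, while $\Psi$ is a non-negative combination of $\omega_V$ with a semi-positive horizontal $(1,1)$-form. Under the hypothesis $s \leq 0$ the term $-s\cdot\Psi$ is itself semi-positive, hence $dd^c\omega \geq 0$ and the theorem follows.

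I expect the main obstacle, already resolved in \cite{KV98, DLM17}, to be the identification of the scalar-curvature coefficient and the semi-positivity of the two resulting pieces $\Phi, \Psi$; the quaternionic-K\"ahler identities are essential here and are the reason the formula is so clean. The only step I would need to do directly is bookkeeping: matching the normalization $d^c = \frac{1}{2i}(\partial - \overline{\partial})$ fixed in this paper with the conventions of those references, and verifying that no sign flips in the final inequality. Once that is done, the statement is immediate from the quoted formula.
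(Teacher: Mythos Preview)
Your proposal is correct and matches the paper's own treatment: the paper does not give an independent proof of this theorem but simply attributes the curvature computation to \cite[Proposition~8.15]{KV98} and \cite[Corollary~5.6]{DLM17}, exactly as you propose to do. Your additional sketch of the $H\oplus V$ decomposition and the schematic form $dd^c\omega=\Phi-s\,\Psi$ is a helpful gloss on what those references contain, but the paper itself does not elaborate beyond the citation.
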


      \section{Plurinegative hermitian metrics}

           \subsection{Monge-Amp\`ere lower bounds} \label{sec:highdim-}

    We  show that the condition    $v_-(\omega)>0$
    is satisfied if $\omega$ satisfies  a  special plurinegative condition.
    
    \begin{defi} \label{def:plurineg}
    We say that a hermitian form $\omega$ satisfies the condition $Plurineg(n)$
    if it is a Gauduchon metric and either $n=2$ or $n \geq 3$ and
    $$
     \sum_{k=1}^{n-\ell-2} (-1)^{n-k-\ell} {n \choose k} {n-k-2 \choose \ell} dd^c\omega^k \wedge \omega^{n-k-2-\ell} \leq 0 
     $$
for any $\ell\in\{0,\ldots,n-3\}$.
    \end{defi}
    
       The condition  $Plurineg(n)$ requires the Gauduchon condition $dd^c \omega^{n-1}=0$.
   It    reduces to the latter when $n=2$, while it moreover asks that
  \begin{itemize}
  \item   $dd^c \omega \leq 0$ when $n=3$;
  \item   $dd^c \omega \leq 0$ and   $3 dd^c \omega^2 -2 \omega \wedge  dd^c \omega  \leq 0$  when $n=4$;
  \item    $dd^c \omega \leq 0$,  
  $dd^c \omega^2 -dd^c \omega \wedge \omega \leq 0$,  and
  $
   2 dd^c \omega^3 - 2 \omega \wedge dd^c \omega^2+\omega^2 \wedge dd^c \omega \leq 0
  $
  when $n=5$.
  \end{itemize}

  \begin{theorem} \label{thm:plurineghighdim}
  	Let  $X$ be a compact complex manifold of dimension $n$.
  	 If $X$ admits a hermitian metric  $\omega$ that satisfies   $Plurineg(n)$
  	 then $v_-(\omega)= \int_X \omega^n>0$. 
 \end{theorem}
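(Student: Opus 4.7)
The plan is to show directly that $\int_X\omega_u^n \geq \int_X\omega^n$ for every smooth $u\in\PSH(X,\omega)$, which combined with the trivial upper bound obtained by taking $u\equiv 0$ and Theorem \ref{thm:demreg} (plus Bedford--Taylor convergence) gives $v_-(\omega)=\int_X\omega^n$. The positivity $\int_X\omega^n>0$ is automatic since $\omega$ is hermitian.

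First I would expand $\omega_u^n=\sum_{k=0}^n \binom{n}{k}(dd^c u)^k\wedge\omega^{n-k}$ and integrate term by term. The $k=0$ term is $\int_X\omega^n$; the $k=1$ term
$\int_X dd^c u\wedge\omega^{n-1}=\int_X u\cdot dd^c\omega^{n-1}$ vanishes by the Gauduchon hypothesis built into $Plurineg(n)$. For $k\geq 2$, two successive applications of Stokes' theorem together with $d(dd^c u)=d(dd^c\omega^{n-k})=0$ and the identity $d^c d=-dd^c$ yield
\[
\int_X (dd^c u)^k\wedge\omega^{n-k} = -\int_X du\wedge d^c u\wedge (dd^c u)^{k-2}\wedge dd^c\omega^{n-k}.
\]
(Since $u$ is smooth there are no regularity issues.) Note that for $k=n$ the right-hand side vanishes because $dd^c\omega^0=0$.

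Next I would substitute $dd^c u=\omega_u-\omega$ and binomially expand $(dd^c u)^{k-2}$, then swap the order of summation to gather terms by the power $m$ of $\omega_u$. After reindexing (set $k$ as the exponent of $dd^c\omega^k$ and $m=\ell$ as the exponent of $\omega_u$), a direct bookkeeping yields
\[
\int_X\omega_u^n-\int_X\omega^n=-\sum_{m=0}^{n-3}\int_X du\wedge d^cu\wedge\omega_u^m\wedge B_m,
\]
where
\[
B_m=\sum_{k=1}^{n-m-2}(-1)^{n-k-m}\binom{n}{k}\binom{n-k-2}{m}\, dd^c\omega^k\wedge\omega^{n-k-m-2}
\]
is exactly the $(n-m-1,n-m-1)$-form appearing in the $Plurineg(n)$ hypothesis with $\ell=m$, and is therefore weakly negative by assumption.

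Finally, since $du\wedge d^c u = i\partial u\wedge\bar\partial u\geq 0$ and $\omega_u^m$ is strongly positive, the form $du\wedge d^cu\wedge\omega_u^m$ is a positive $(m+1,m+1)$-form; wedging with the weakly positive form $-B_m$ produces a non-negative top form, so each summand on the right-hand side is non-positive. The overall minus sign then gives $\int_X\omega_u^n\geq\int_X\omega^n$, completing the proof. The main technical obstacle is the combinatorial bookkeeping in the reindexing step: one has to carefully track the signs and binomial coefficients produced by expanding $(\omega_u-\omega)^{k-2}$ to verify that the resulting bracket is exactly the one postulated in Definition \ref{def:plurineg}; everything else is standard Stokes-type manipulation.
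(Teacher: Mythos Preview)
Your proposal is correct and follows essentially the same route as the paper's proof: binomial expansion of $\omega_u^n$, elimination of the linear term via the Gauduchon condition and of the top term via $dd^c\omega^0=0$, double integration by parts to produce $-\int_X du\wedge d^cu\wedge(dd^cu)^{k-2}\wedge dd^c\omega^{n-k}$, then the substitution $dd^cu=\omega_u-\omega$ and reindexing to isolate exactly the bracket in Definition~\ref{def:plurineg}. The only differences are cosmetic---the paper first treats $n=3$ and $n=4$ as warm-ups and uses the power of $\omega$ (rather than of $dd^cu$) as the initial summation index---so your write-up is effectively a compressed version of the same argument.
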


  \begin{proof}
  Fix $u$ a smooth $\omega$-psh function.
   We first treat the case $n=3$ to set the scene. The Gauduchon condition yields
$\int_X \omega^2 \wedge dd^c u=0$ while Stokes theorem ensures that $\int_X (dd^c u)^3=0$, thus
\begin{eqnarray*}
\int_X (\omega+dd^c u)^3 & = &\int_X \omega^3+3\int_X \omega^2 \wedge dd^c u+3 \int_X \omega \wedge (dd^c u)^2 
+\int_X (dd^c u)^3 \\
&=&\int_X \omega^3+3\int_X -dd^c \omega \wedge d u \wedge d^c u \\
& \geq & \int_X \omega^3
\end{eqnarray*}
 if $dd^c \omega \leq 0$, so that $v_-(\omega) \geq \int_X \omega^3>0$.
  
To make the arguments  clearer, we also explicitly look at the case $n=4$.
  Using the binomial expansion, the Gauduchon condition and Stokes theorem, we obtain
  \begin{eqnarray*}
  \lefteqn{  \int_X \omega_u^4-\int_X \omega^4 = 6 \int_X -dd^c \omega^2 \wedge du \wedge d^c u
  +4 \int_X -dd^c \omega \wedge dd^c u \wedge du \wedge d^c u} \\
  &=& 2\int_X [-3 dd^c \omega^2+2 dd^c \omega \wedge \omega] \wedge du \wedge d^c u
  +4 \int_X -dd^c \omega \wedge \omega_u \wedge du \wedge d^c u \\
  &\geq & 0
  \end{eqnarray*}
  if $Plurineg(4)$ is satisfied.

We now consider the general case.
\begin{eqnarray*}
\int_X \omega_u^n - \int_X \omega^n
&=& \sum_{k=0}^{n-1} {n \choose k} \int_X \omega^k \wedge (dd^cu)^{n-k} \\
&=& \int_X (dd^cu)^n + \sum_{k=1}^{n-2} {n \choose k} \int_X \omega^k \wedge (dd^cu)^{n-k} + n \int_X u \, dd^c\omega^{n-1} \\
&=& - \sum_{k=1}^{n-2} {n \choose k} \int_X dd^c\omega^k \wedge (dd^cu)^{n-k-2}\wedge du\wedge d^cu ,
\end{eqnarray*}
using the Gauduchon condition $dd^c\omega^{n-1}=0$.
Thus
\begin{eqnarray*}
\lefteqn{
\int_X \omega_u^n - \int_X \omega^n
= - \sum_{k=1}^{n-2} {n \choose k} \int_X dd^c\omega^k \wedge (\omega_u-\omega)^{n-k-2}\wedge du\wedge d^cu }\\
&=& - \sum_{k=1}^{n-2} \sum_{\ell=0}^{n-k-2} (-1)^{n-k-\ell} {n \choose k} {n-k-2 \choose \ell} \int_X dd^c\omega^k \wedge \omega^{n-k-2-\ell}\wedge \omega_u^\ell\wedge du\wedge d^cu \\
&=& - \sum_{\ell=0}^{n-3} \sum_{k=1}^{n-\ell-2} (-1)^{n-k-\ell}{n \choose k} {n-k-2 \choose \ell} \int_X dd^c\omega^k \wedge \omega^{n-k-2-\ell}\wedge \omega_u^\ell\wedge du\wedge d^cu \\
&=& -   \sum_{\ell=0}^{n-3} \int_X   \sum_{k=1}^{n-\ell-2} (-1)^{n-k-\ell} {n \choose k} {n-k-2 \choose \ell} dd^c\omega^k \wedge \omega^{n-k-2-\ell}  \wedge \omega_u^\ell\wedge du\wedge d^cu .
\end{eqnarray*}
Therefore, if
$$ \sum_{k=1}^{n-\ell-2} (-1)^{n-k-\ell} {n \choose k} {n-k-2 \choose \ell} dd^c\omega^k \wedge \omega^{n-k-2-\ell} \leq 0 $$
for any $\ell\in\{0,\ldots,n-3\}$, we get that $v_-(\omega)\geq\int_X \omega^n$.
\end{proof}

       \subsection{Monge-Amp\`ere bounds in dimension 3} \label{sec:dim3}

    We focus 
    in this section on the $3$-dimensional setting and observe that 
    the condition  $v_-(X,\omega_X)>0$
    is satisfied if $X$   admits a plurinegative hermitian metric
    which is not necessarily Gauduchon. Note that the restriction of a plurinegative
    metric yields a plurinegative metric, but the Gauduchon condition is in general not preserved under restriction.

  \begin{theorem} \label{thm:plurinegpos3}
  	Let  $X$ be a compact complex manifold of dimension $3$.
  	 If $X$ admits a hermitian metric  $\omega$ such that  $dd^c \omega\leq 0$, then $v_-(\omega)>0$. 

  	In particular if $\omega$ is pluriclosed, then $0 < v_-(\omega) \leq v_+(\omega)<+\infty$.
  \end{theorem}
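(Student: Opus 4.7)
The ``In particular'' assertion is immediate: when $dd^c\omega=0$ the hypothesis of Theorem~\ref{thm:higher-dim} (which for $n=3$ reduces to $dd^c\omega\ge 0$) is satisfied and gives $v_+(\omega)<+\infty$, while the main claim below supplies $v_-(\omega)>0$. I focus on establishing $v_-(\omega)>0$ under the sole hypothesis $dd^c\omega\le 0$.

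By Theorem~\ref{thm:demreg} and Bedford--Taylor continuity, it suffices to bound $\int_X (\omega+dd^c u)^3$ from below uniformly over smooth $\omega$-psh $u$ normalized by $\sup_X u=0$. Expanding binomially and applying Stokes' theorem ($\int_X (dd^c u)^3=0$), together with the standard double integration by parts
\[
\int_X \omega\wedge(dd^c u)^2 \;=\; -\int_X dd^c\omega\wedge du\wedge d^c u \;\ge\; 0,
\]
which is nonnegative by plurinegativity of $\omega$, one obtains
\[
\int_X\omega_u^3 \;\ge\; \int_X\omega^3 + 3\int_X u\cdot dd^c\omega^2.
\]
The residual integral is the non-Gauduchon obstruction: since $dd^c\omega^2 = 2\, dd^c\omega\wedge\omega + 2\, d\omega\wedge d^c\omega$ is generally sign-indefinite, and $u$ can be arbitrarily negative, the $L^1$ bound $\int_X(-u)\omega^3\le C(X,\omega)$ is not by itself sufficient to extract a positive lower bound.

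To bypass this I would use the metric-independence of the property $v_-(\cdot)>0$ recalled in Section~2.2 (from \cite[Theorem~4.12]{GL21b}): it is enough to prove $v_-(\omega_G)>0$ for the Gauduchon representative $\omega_G:=e^f\omega$ in the conformal class of $\omega$, where $f\in C^\infty(X)$ is uniquely determined up to a constant by $dd^c(\omega_G^2)=dd^c(e^{2f}\omega^2)=0$. With $\omega_G$ in place of $\omega$, the binomial expansion simplifies to
\[
\int_X(\omega_G+dd^c u)^3 \;=\; \int_X\omega_G^3 - 3\int_X dd^c\omega_G\wedge du\wedge d^c u,
\]
so everything reduces to controlling the last integral from below. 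The decisive algebraic input is the Gauduchon identity
\[
dd^c\omega_G\wedge\omega_G \;=\; -\,d\omega_G\wedge d^c\omega_G \;\le\; 0,
\]
asserting non-positivity of $dd^c\omega_G$ after contraction against $\omega_G$. Combining this with the explicit formula
\[
e^{-f}\, dd^c\omega_G \;=\; dd^c f\wedge\omega + df\wedge d^c f\wedge\omega + df\wedge d^c\omega - d^c f\wedge d\omega + dd^c\omega
\]
(whose final term inherits the plurinegativity of $\omega$), together with the $L^1$-compactness of normalized $\omega_G$-psh functions, should deliver the required uniform lower bound.

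The main obstacle is the last step: the Gauduchon identity controls $dd^c\omega_G$ only after contraction by $\omega_G$, whereas the rank-one current $du\wedge d^c u$ probes $dd^c\omega_G$ along directions transverse to $\omega_G$ where its sign is indefinite. To bridge this gap I anticipate having to combine the pointwise domination $du\wedge d^c u\le |\partial u|_{\omega_G}^2\,\omega_G$ with a Chern--Levine--Nirenberg type energy estimate, possibly after a preliminary approximation by $\omega_G$-psh functions with analytic singularities (Theorem~\ref{thm:demreg}), in order to transfer the trace-type Gauduchon inequality into the needed integral bound.
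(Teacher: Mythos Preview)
Your proposal has a genuine gap at the crucial step. You correctly identify that the naive expansion gives
\[
\int_X\omega_u^3 \;\ge\; \int_X\omega^3 + 3\int_X u\, dd^c\omega^2,
\]
and that the last term is the obstruction when $\omega$ is not Gauduchon. Your plan to pass to the Gauduchon representative $\omega_G=e^f\omega$ and invoke metric-independence of $v_->0$ does not close the argument: the conformal change destroys the hypothesis $dd^c\omega\le0$ (the extra terms $dd^cf\wedge\omega+df\wedge d^cf\wedge\omega+df\wedge d^c\omega-d^cf\wedge d\omega$ have no sign), so you are left trying to show $v_-(\omega_G)>0$ for a metric about which you only know the trace inequality $dd^c\omega_G\wedge\omega_G\le0$. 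Your proposed bridge, the pointwise bound $du\wedge d^cu\le |\partial u|_{\omega_G}^2\,\omega_G$, goes in the wrong direction: combined with $dd^c\omega_G\wedge\omega_G\le0$ it yields an \emph{upper} bound on $-\int dd^c\omega_G\wedge du\wedge d^cu$, not the lower bound you need. No Chern--Levine--Nirenberg estimate repairs this, since the problem is a sign mismatch, not a lack of integrability.

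The paper's proof is completely different and does not attempt to bound $\int_X\omega_u^3$ from below by a fixed positive constant via the binomial expansion. Instead it argues by contradiction: assuming $\int_X\omega_{u_j}^3\to0$ for a normalized sequence $u_j$, one solves, for each $j$, an auxiliary Monge--Amp\`ere equation $(\omega+dd^c\phi_j)^3=c_j u_j^2\,\omega^3$ (existence by \cite{TW10}) and uses the uniform estimates of \cite{KN15} to get $c_j\simeq1$ and $\phi_j$ uniformly bounded. The hypothesis $dd^c\omega\le0$ enters only through the differential inequality
\[
dd^c(\omega_{u}\wedge\omega_{v}) \;=\; dd^c\omega\wedge\omega_{u}+dd^c\omega\wedge\omega_{v}+2\,d\omega\wedge d^c\omega \;\le\; 2B\,\omega^3,
\]
which, applied to $u=u_j$ and $v=v_j:=\max(u_j,\phi_j-t)$, gives via Stokes' theorem
\[
\int_X\bigl(\omega_{v_j}^3-\omega_{u_j}^3\bigr) \;\le\; 6B\int_X(v_j-u_j)\,\omega^3.
\]
The right-hand side is then absorbed using $c_ju_j^2\omega^3=\omega_{\phi_j}^3$ and the plurifine identity ${\bf 1}_{\{u_j<\phi_j-t\}}\omega_{\phi_j}^3={\bf 1}_{\{u_j<\phi_j-t\}}\omega_{v_j}^3$, yielding $\int_X\omega_{v_j}^3\le2\int_X\omega_{u_j}^3\to0$. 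Since the $v_j$ are uniformly bounded, this contradicts \cite[Proposition~3.4]{GL21b}. The key idea you are missing is this auxiliary-equation/comparison mechanism; the direct energy approach you outline does not seem to work without the Gauduchon condition.
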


  \begin{proof}
Assume by contradiction that there exists a sequence 
 $(u_j)$ such that $u_j \in \PSH(X,\omega)\cap C^{\infty}(X)$, $\sup_X u_j=-1$ and 
  	\[
  	\int_X (\omega+dd^c u_j)^3 \to 0. 
  	\]
  	For each $j$, let $\phi_j\in \PSH(X,\omega)\cap {\mathcal C}^{\infty}(X)$ be the unique solution to 
  	\[
  	(\omega+dd^c \phi_j)^3 = c_j u_j^2 \omega^3
  	\] 
  	normalized by $\sup_X \phi_j =0$.
  	Here $c_j>0$ is a positive constant.
  	 The existence of $\phi_j$ (and that of $c_j$) follows from the main result of \cite{TW10}. 
  	 Observe that  $\int_X |u_j|^4 \omega^3$ is uniformly bounded away from $0$ and infinity. 
  	 Thus by \cite[Lemma 3.13 and Theorem 5.8]{KN15},
  	\[
  	C^{-1}\leq c_j \leq C, \; \phi_j \geq -C, 
  	\]
  	for some uniform constant $C>0$.
  	
  	\smallskip
  	
  	 Let $B$ be a positive constant such that 
  	$
  	d\omega \wedge d^c \omega \leq B \omega^3. 
  	$
  	For $u,v \in PSH(X,\omega) \cap L^{\infty}(X)$
  	we set $\omega_u:=\omega+dd^cu, \omega_v:=\omega+dd^cv$.
  When $dd^c \omega\leq 0$     one obtains
   $$
   dd^c (\omega_u \wedge \omega_v)=dd^c \omega \wedge \omega_u
   +dd^c \omega \wedge \omega_v+2 d\omega \wedge d^c \omega \leq 2B \omega^3,
   $$
  hence
  	\[
  	dd^c (\omega_{u_j}^2+ \omega_{v_j}^2 + \omega_{u_j}\wedge \omega_{v_j}) \leq 6 d\omega \wedge d^c \omega\leq 6B\omega^3. 
  	\]
  	Fix $t>1$ and set $v_j := \max(u_j,\phi_j-t)$. Using Stokes theorem we obtain 
  	\begin{flalign*}
  		\int_X (\omega_{v_j}^3-\omega_{u_j}^3) &= \int_X (v_j-u_j) dd^c (\omega_{u_j}^2+ \omega_{v_j}^2 + \omega_{u_j}\wedge \omega_{v_j})\\
  		&\leq 6B \int_X (v_j-u_j) \omega^3 = 6B\int_{\{u_j<\phi_j-t\}} (v_j-u_j)\omega^3\\ 
  		&\leq 6B \int_{\{u_j<\phi_j-t\}} |u_j|\omega^3 \\
  		&\leq  \frac{6B}{tc_j} \int_{\{u_j<\phi_j-t\}} c_j|u_j|^2\omega^3\\
  		&\leq \frac{6BC}{t} \int_{\{u_j<\phi_j-t\}}  \omega_{\phi_j}^3 \\
  		&\leq \frac{6BC}{t} \int_X  \omega_{v_j}^3.
  	\end{flalign*}
  	In the last line we have used the identity
  	\[{
  {\bf 1}_{\{u_j<\phi_j-t\}} \omega_{\phi_j}^3=	\bf 1}_{\{u_j<\phi_j-t\}} \omega_{v_j}^3 \leq \omega_{v_j}^3.
  	\] 
  	Choosing $t= 12BC$ we obtain $\int_X \omega_{v_j}^3 \leq 2 \int_X \omega_{u_j}^3$. Using \cite[Proposition 3.4]{GL21b}, we arrive at a contradiction since the functions $v_j$ are uniformly bounded. 
  \end{proof}

        \subsection{Various obstructions}

      \subsubsection{Pluriclosed and Plurinegative  metrics}  
      
        Constructing pluriclosed hermitian metrics on compact complex manifolds
   is a problem that has attracted a lot of attention in the last decades.
        We observe here a rigidity property of this condition.
    
    \begin{prop}
    Let $(X,\omega)$ be a compact hermitian manifold such that
    $dd^c \omega=0$. If $dd^c \omega^2 \leq 0$ (resp. $dd^c \omega^2 \geq 0$)
    then $dd^c \omega^j=0$ for all $j \geq 1$.
    \end{prop}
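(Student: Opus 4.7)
The plan is to reduce the claim to the single pointwise identity $dd^c\omega^2 = 0$, and then apply a universal formula that propagates the vanishing to all $dd^c\omega^j$. Two applications of the Leibniz rule yield, for any smooth real $(1,1)$-form $\omega$ and any $k \geq 2$,
\[
dd^c\omega^k = k\, dd^c\omega \wedge \omega^{k-1} + k(k-1)\, d\omega \wedge d^c\omega \wedge \omega^{k-2},
\]
which under $dd^c\omega = 0$ collapses to $dd^c\omega^k = \tfrac{k(k-1)}{2}\, dd^c\omega^2 \wedge \omega^{k-2}$ (using $dd^c\omega^2 = 2\, d\omega \wedge d^c\omega$). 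Once we have $dd^c\omega^2 = 0$ pointwise, the conclusion follows by substitution. The case $n = 2$ is trivial, so I assume $n \geq 3$.

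I would then specialize the identity to $k = n-1$ and integrate. By Stokes' theorem,
\[
0 = \int_X dd^c\omega^{n-1} = \tfrac{(n-1)(n-2)}{2} \int_X dd^c\omega^2 \wedge \omega^{n-3}.
\]
The hypothesis that $dd^c\omega^2$ has a fixed sign means $\pm dd^c\omega^2$ is a weakly positive $(3,3)$-form, so wedging with the strongly positive form $\omega^{n-3}$ produces a top form of constant sign. Since its integral vanishes, the top form vanishes pointwise on $X$, that is, $dd^c\omega^2 \wedge \omega^{n-3} \equiv 0$.

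The final step is to upgrade this to the pointwise vanishing of the $(3,3)$-form $dd^c\omega^2$ itself. This relies on the standard positivity principle: if $\alpha$ is a weakly positive (or weakly negative) $(p,p)$-form and $\omega$ is a positive $(1,1)$-form, then $\alpha \wedge \omega^{n-p} \equiv 0$ implies $\alpha \equiv 0$. Indeed, $\omega^{n-p}$ lies in the interior of the cone of strongly positive $(n-p, n-p)$-forms; for any strongly positive $\beta$, both $\omega^{n-p} \pm \varepsilon \beta$ remain strongly positive for small $\varepsilon > 0$, and pairing them with $\pm \alpha$ forces $\alpha \wedge \beta \equiv 0$ for every strongly positive $\beta$, hence $\alpha \equiv 0$ by non-degeneracy of the pairing. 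Applied with $\alpha = \pm dd^c\omega^2$ and $p = 3$, this gives $dd^c\omega^2 = 0$ pointwise, and substitution into the reduction formula then yields $dd^c\omega^j = 0$ for all $j \geq 2$; the case $j = 1$ is the pluriclosed hypothesis.

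The main obstacle is the last step: extracting pointwise vanishing of a $(3,3)$-form from the vanishing of its wedge with $\omega^{n-3}$. In dimension $n = 3$ it is trivial since $\omega^{n-3} = 1$, but in higher dimensions the argument genuinely uses the sign-definiteness of $dd^c\omega^2$ together with the interior-point property of $\omega^{n-p}$ in the strong positivity cone.
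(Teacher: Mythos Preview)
Your proof is correct and follows the same route as the paper's: derive the Leibniz identity $dd^c\omega^k = k\,dd^c\omega\wedge\omega^{k-1} + k(k-1)\,d\omega\wedge d^c\omega\wedge\omega^{k-2}$, specialize to $k=n-1$, integrate using Stokes, and then use the sign hypothesis to force $dd^c\omega^2=0$. The paper's argument is terse (it simply writes the integral identity and says ``the conclusion follows''), whereas you have made explicit both the reduction formula $dd^c\omega^k = \tfrac{k(k-1)}{2}\,dd^c\omega^2\wedge\omega^{k-2}$ and the interior-point positivity argument needed to pass from $dd^c\omega^2\wedge\omega^{n-3}\equiv 0$ to $dd^c\omega^2\equiv 0$ when $n\geq 4$; both steps are indeed required and your treatment of them is sound.
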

    
    The restrictive condition $dd^c \omega =0$ \& $dd^c \omega^2=0$ has been introduced by Guan-Li
    \cite{GL10}, and further studied by Chiose \cite{Chi16}, it is equivalent to the preservation
    of the Monge-Amp\`eres volumes,
    $v_+(\omega)=v_-(\omega)=\int_X \omega^n$
    (see Theorem \ref{thm:chiose} and Remark \ref{rem:balancedvspluriclosed}).

    \begin{proof}
   It follows from Stokes theorem that  $\int_X dd^c (\omega^{n-1})=0$. Now
   \begin{eqnarray*}
   \frac{dd^c(\omega^{n-1})}{n-1} &=& \left\{ dd^c \omega \wedge \omega+(n-2) d\omega \wedge d^c \omega \right\} \wedge \omega^{n-3} \\
   &=& \left\{ \frac{n-2}{2} dd^c \omega^2  -(n-3) dd^c \omega \wedge \omega \right\} \wedge \omega^{n-3},
   \end{eqnarray*}
   hence
   $$
  \frac{n-2}{2} \int_X dd^c \omega^2 \wedge \omega^{n-3}  =(n-3)\int_X dd^c \omega \wedge \omega^{n-2}.
   $$
   The conclusion follows.
    \end{proof}

    An adaptation of the proof of  Theorem \ref{thm:hbd+} yields the following 
   characterization of the existence of plurinegative metrics \cite[Theorem 3.3]{Eg01}.

        \begin{theorem} \label{thm:hbd-}
   The following properties are equivalent.
    \begin{itemize}
    \item There exists a hermitian form $\omega$ such that $dd^c \omega \leq 0$.
      \item There exists no positive  current $\tau$ of bidimension $(2,2)$ such that $S=dd^c \tau \geq 0$
    with $S \neq 0$.
    \end{itemize}
    \end{theorem}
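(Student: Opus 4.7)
The plan is to adapt the Hahn-Banach argument of Theorem \ref{thm:hbd+}, reversing the roles of positive and negative currents so as to detect plurinegativity rather than pluripositivity.

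The non-coexistence direction is immediate from Stokes' theorem: if $\omega$ is a hermitian form with $dd^c \omega \leq 0$ and $\tau \geq 0$ is a current of bidimension $(2,2)$ with $dd^c \tau \geq 0$, then
\[
0 \leq \int_X \omega \wedge dd^c \tau = \int_X dd^c \omega \wedge \tau \leq 0.
\]
As $\omega \wedge dd^c \tau$ is a positive measure whose integral vanishes and $\omega$ is pointwise strictly positive, this forces $dd^c \tau = 0$.

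For the converse, I fix an auxiliary continuous hermitian form $\omega_0$ on $X$ and introduce, in the space of currents of bidimension $(1,1)$ equipped with the weak topology,
\[
\mathcal{C} := \left\{ S \geq 0 : \int_X S \wedge \omega_0 = 1 \right\}, \qquad F := \left\{ dd^c \tau : \tau \geq 0 \text{ of bidimension } (2,2) \right\}.
\]
Here $\mathcal{C}$ is convex and weakly compact by standard compactness of positive currents of bounded mass, while $F$ is a convex cone containing $0$ and weakly closed. The hypothesis that no positive $\tau$ yields $dd^c \tau \geq 0$ with $dd^c \tau \neq 0$ translates exactly into $\mathcal{C} \cap F = \emptyset$: any common element would be a normalized, hence nonzero, positive current of the form $dd^c \tau$.

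Hahn-Banach then produces a continuous linear functional $\Phi$ on currents of bidimension $(1,1)$ that separates $\mathcal{C}$ from $F$. Since $F$ is a cone containing $0$, the separation can be normalized to $\Phi \leq 0$ on $F$ and $\Phi > 0$ on $\mathcal{C}$, and de Rham duality represents $\Phi$ as integration against a smooth $(1,1)$-form $\omega$. The condition $\Phi \leq 0$ on $F$ reads $\int_X \omega \wedge dd^c \tau \leq 0$ for every $\tau \geq 0$ of bidimension $(2,2)$, which by integration by parts is equivalent to $dd^c \omega \leq 0$. The strict positivity on $\mathcal{C}$, combined with rescaling of the normalization, yields $\int_X \omega \wedge S > 0$ for every nonzero positive current $S$ of bidimension $(1,1)$; testing against pointwise positive $(n-1,n-1)$ directions then forces $\omega$ to be pointwise positive definite, i.e.\ hermitian. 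The most delicate technical point, already implicit in the proof of Theorem \ref{thm:hbd+}, is the weak closedness of the cone $F$, ensuring that Hahn-Banach genuinely separates $\mathcal{C}$ from $F$ and not merely from its closure.
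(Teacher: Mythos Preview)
Your argument is correct and is precisely the adaptation the paper intends: it does not spell out a separate proof of Theorem \ref{thm:hbd-} but simply remarks that the Hahn-Banach proof of Theorem \ref{thm:hbd+} carries over, and your version---replacing the normalized cone of negative bidimension-$(1,1)$ currents by the normalized cone of positive ones while keeping $F=\{dd^c\tau:\tau\geq 0\}$ unchanged---is exactly that adaptation. Your added remarks on the closedness of $F$ and on why strict positivity on $\mathcal{C}$ forces $\omega$ to be hermitian make explicit points the paper leaves implicit.
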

  
    A similar obstruction for the existence of pluriclosed hermitian metrics   
is well-known \cite[Theorem 3.3]{Eg01}:
    there exists a hermitian form $\omega$ such that $dd^c \omega = 0$
    if and only if the only
   $dd^c$-exact   positive  current $S$ of bidimension $(1,1)$ is $0$.

     \begin{remark}
   It follows from the work of Ivashkovich  that 
   one can extend meromorphic maps with values in
   compact non-K\"ahler manifolds endowed with a plurinegative metric
   (see \cite[Theorem 2.2]{Iv04}).
   \end{remark}

      \subsubsection{Mutually exclusive conditions in dimension $3$}  
    
    Let $X$ be a compact complex $3$-fold. 
     If $X$ admits a pluripositive hermitian metric $\omega$ and another plurinegative hermitian metric $\omega'$,
     then these are actually both pluriclosed. 
     More generally, it follows from Theorems \ref{thm:hbd+} and   \ref{thm:hbd-}
     that  we have the following alternative.
   
   \begin{coro} \label{cor:obstruction}
   Let $X$ be a compact complex $3$-fold.  
 The following conditions are  mutually exclusive:
 \begin{itemize}
 \item $X$ admits a  hermitian metric $\omega$ such that $dd^c \omega \geq 0$ and $dd^c \omega \neq 0$;
 \item $X$ admits a  hermitian metric $\omega$ such that  $dd^c \omega = 0$;
 \item $X$ admits a  hermitian metric $\omega$ such that $dd^c \omega \leq 0$ and $dd^c \omega \neq 0$;
 \item $X$ does not admit any  hermitian metric $\omega$ such that $dd^c \omega$ has a sign.
 \end{itemize}
   \end{coro}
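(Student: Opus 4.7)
The plan is to derive the four-way alternative as a direct consequence of the Hahn--Banach-type characterizations in Theorems \ref{thm:hbd+} and \ref{thm:hbd-}, exploiting the special feature of complex dimension $3$ that a hermitian $(1,1)$-form is itself a positive current of bidimension $(n-1,n-1)=(2,2)$, hence an admissible test object in those theorems.

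First I would dispose of the fourth alternative: condition (4) is literally the negation of the disjunction of (1), (2), (3), so it is automatically mutually exclusive from each of the other three. The real content is thus to show that the first three conditions cannot coexist pairwise.

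Next I would treat (1) versus (2) and (3). Suppose $X$ admits a hermitian metric $\omega$ with $dd^c\omega \geq 0$ and $dd^c\omega \neq 0$. In dimension $3$, $\tau:=\omega$ is a positive current of bidimension $(2,2)$, and $S:=dd^c\tau = dd^c\omega\geq 0$ with $S\neq 0$. Theorem \ref{thm:hbd-} then forbids the existence of any hermitian metric $\omega'$ with $dd^c\omega'\leq 0$. In particular, both the pluriclosed case ($dd^c\omega'=0$) and the strict plurinegative case ($dd^c\omega'\leq 0$, $\neq 0$) are excluded. This rules out (2) and (3) in the presence of (1). Symmetrically, if (3) holds, take $\tau:=\omega$ for a metric realizing (3); then $dd^c\tau\leq 0$ with $dd^c\tau\neq 0$, so Theorem \ref{thm:hbd+} rules out any hermitian metric with $dd^c\omega\geq 0$, which excludes both (1) and (2).

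Finally I would close the loop by handling (2) against (1) and (3). A pluriclosed metric is simultaneously pluripositive and plurinegative, so by the same two theorems it prevents the existence of any current $\tau\geq 0$ of bidimension $(2,2)$ with $dd^c\tau$ of either strict sign; applying this to $\tau:=\omega$ for a hypothetical witness of (1) or of (3) yields the contradiction. There is no genuine obstacle here: the argument reduces entirely to checking that in dimension $3$ the ambient hermitian form itself is a legitimate choice of test current $\tau$ in Theorems \ref{thm:hbd+} and \ref{thm:hbd-}, and the rest is bookkeeping of signs. The only point worth emphasizing is that the pluriclosed case lies in the closure of both the pluripositive and the plurinegative cones, which is precisely why it forms its own separate alternative rather than coexisting with (1) or (3).
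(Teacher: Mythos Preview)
Your proof is correct and follows essentially the same approach as the paper: both arguments hinge on the observation that in complex dimension $3$ a hermitian form is itself a positive current of bidimension $(2,2)$, so it can be fed into Theorems \ref{thm:hbd+} and \ref{thm:hbd-} as the test current $\tau$. The paper's version is simply more terse---it assumes a pluripositive $\omega$ and a plurinegative $\omega'$ coexist and concludes from the two theorems that both must be pluriclosed---whereas you spell out the full pairwise case analysis; your step~4 is in fact already implied by steps~2 and~3, but this redundancy does no harm.
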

   
   We refer the reader to the examples to follow for an illustration    of each case. 
   
     \begin{proof}
   Assume $\omega$ is a hermitian form such that $dd^c \omega \geq 0$, while 
   $\omega'$ is a hermitian form such that $dd^c \omega' \leq 0$. 
   It follows from Theorem \ref{thm:hbd+} and Theorem \ref{thm:hbd-} that
   $dd^c \omega=0$ and $dd^c \omega'=0$.
   \end{proof}

   Recall that the class ${\mathcal C}$ of Fujiki consists of compact complex
   manifolds that are bimeromorphic to a K\"ahler manifold.
   The non-existence of plurisigned hermitian metric occurs on non-K\"ahler
   Fujiki $3$-folds, as we now observe.
   
      \begin{prop} \label{pro:fujiki}
   Let $X$ be a  compact complex $3$-fold in the Fujiki class ${\mathcal C}$.
   Then $X$ does not admit any plurisigned hermitian metric, unless $X$ is K\"ahler.
   \end{prop}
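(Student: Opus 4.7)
Since $X$ belongs to the Fujiki class $\mathcal{C}$, by definition one can find a modification $\pi:\tilde X\to X$ with $(\tilde X,\tilde\omega)$ a compact K\"ahler $3$-fold. Arguing by contradiction, suppose $X$ is non-K\"ahler and admits a plurisigned hermitian metric $\omega$. My plan is twofold: first I will show that $\omega$ is forced to be pluriclosed (i.e.\ the reduction from $dd^c\omega$ of fixed sign to $dd^c\omega=0$), and then I will invoke Chiose's theorem that a Fujiki-class compact complex manifold admitting a pluriclosed metric must be K\"ahler, reaching the desired contradiction.

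For the first step, assume $dd^c\omega\ge 0$; the plurinegative case is completely symmetric. Then $\pi^*\omega$ is a smooth semi-positive $(1,1)$-form on $\tilde X$ and $dd^c(\pi^*\omega)=\pi^*(dd^c\omega)\ge 0$ is a smooth weakly positive $(2,2)$-form, since positivity of $(p,q)$-forms is preserved under holomorphic pullback. Using $d\tilde\omega=0$ and Stokes' theorem,
\[
\int_{\tilde X} dd^c(\pi^*\omega)\wedge\tilde\omega \;=\; \int_{\tilde X} \pi^*\omega\wedge dd^c\tilde\omega \;=\; 0.
\]
The integrand is a non-negative top-degree form with vanishing integral, hence vanishes pointwise on $\tilde X$. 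Fix a point $p\in\tilde X$: for every real $(1,1)$-form $\beta$, the form $\tilde\omega\pm\varepsilon\beta$ is strictly positive for $\varepsilon>0$ small enough, so weak positivity of $dd^c(\pi^*\omega)_p$ gives $dd^c(\pi^*\omega)_p\wedge(\tilde\omega\pm\varepsilon\beta)\ge 0$. Combined with $dd^c(\pi^*\omega)_p\wedge\tilde\omega=0$, this yields $dd^c(\pi^*\omega)_p\wedge\beta=0$ for every real $(1,1)$-form $\beta$ at $p$. The non-degenerate wedge pairing between real $(1,1)$- and $(2,2)$-forms on a $3$-fold then forces $dd^c(\pi^*\omega)=0$ on $\tilde X$. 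Since $\pi$ is biholomorphic over a dense open subset of $X$ and $dd^c\omega$ is continuous, we conclude $dd^c\omega=0$ on $X$.

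Thus $\omega$ is a pluriclosed metric on a compact complex $3$-fold belonging to the Fujiki class $\mathcal{C}$. The theorem of Chiose then asserts that $X$ is K\"ahler, contradicting our assumption. The genuinely hard step is this last deep rigidity statement; by contrast the reduction from plurisigned to pluriclosed is elementary, requiring only Stokes' theorem on the K\"ahler modification and pointwise linear algebra on positive $(2,2)$-forms.
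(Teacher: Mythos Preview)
Your argument is correct and follows the same overall strategy as the paper: reduce plurisigned to pluriclosed by a Stokes-type vanishing against a closed K\"ahler object, then invoke Chiose's theorem \cite[Theorem 2.2]{Chi14}. The implementation differs slightly. The paper stays on $X$ and uses that a Fujiki manifold carries a K\"ahler \emph{current} $T\geq\omega$; then
\[
0 \leq \int_X \omega\wedge dd^c\omega \leq \int_X T\wedge dd^c\omega = 0
\]
since $T$ is closed, forcing $dd^c\omega=0$. You instead pass to a K\"ahler modification $\pi:\tilde X\to X$, pull back $\omega$, and pair against the K\"ahler form $\tilde\omega$ there; the pointwise linear-algebra step and the descent by density are fine. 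Your route avoids the (standard but nontrivial) equivalence ``Fujiki class $\Leftrightarrow$ existence of a K\"ahler current'', at the mild cost of tracking positivity under pullback and arguing on the complement of the exceptional locus. Both approaches hinge on the same deep input, namely Chiose's rigidity theorem.
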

   
   A celebrated example of a non-K\"ahler $3$-fold bimeromorphic to $\C\PP^3$
   has been provided by Hironaka \cite{Hir60}.
   This shows that \cite[Theorem 6.5]{Eg01} is  {incorrect}.

      \begin{proof}
      This is \cite[Theorem 2.3]{Chi14}. We include the proof for the reader's convenience.
     Let $X$ be a  compact complex $3$-fold in the Fujiki class ${\mathcal C}$.
     Let $\omega$ be a pluripositive hermitian metric.
     There exists a K\"ahler current on $X$, i.e. a positive closed current $T$ of bidegree $(1,1)$ which dominates a hermitian form. Up to rescaling, we can assume $T\geq \omega$.
     
     Then $0 \leq \omega \wedge dd^c \omega \leq T \wedge dd^c \omega$, hence
     $$
     0 \leq \int_X \omega \wedge dd^c \omega \leq \int_X T \wedge dd^c \omega=0,
     $$
     as follows from Stokes theorem. Thus $\omega$ is pluriclosed and it follows from \cite[Theorem 2.2]{Chi14} that $X$ is K\"ahler.
The proof for plurinegative metrics is similar.
\end{proof}

    \subsection{Locally conformally K\"ahler manifolds} \label{sec:hopf}

 We observe in this section that a large class of non-K\"ahler manifolds
 admits a plurinegative hermitian metric $\omega$. This ensures that 
 $v_-(\omega)>0$ when $\dim_{\C} X=3$.
 We then have a closer look at the special subfamily of diagonal Hopf $3$-folds.

  \subsubsection{Existence of plurinegative metrics} 
  
  Recall that a complex manifold $(X,\omega)$ is {\em locally conformally K\"ahler} (lck)
  if one can find local smooth conformal factors $f_j>0$ in an open cover  $\{U_j\}$ 
 of $X$ such that $f_j \omega$ is K\"ahler in $U_j$.
  Thus 
  $$
  d \omega=-\frac{d f_j}{f_j} \wedge \omega=d(-\log f_j) \wedge \omega
  $$
  and these glue together into a globally well defined closed $1$-form
  $\theta$ such that $d\omega =\theta \wedge \omega$. 
  
  The {\em Lee form} $\theta$ is unique and defines a conformal invariant,
 in particular  it vanishes if and only if (a conformal multiple of) the metric $\omega$  is K\"ahler.
 We refer the reader to \cite{DO98} for an introduction to lcK geometry,
 and to  \cite{OV11,OV20, bazzoni, ornea-verbitsky-book} for a more recent account.
 
 \begin{defi}
 One says that  a compact hermitian manifold $(X,\omega)$    is {\it locally conformally K\"ahler with potential}
 if  there exists a smooth positive   plurisubharmonic function $\f:\tilde{X} \rightarrow \R_+^*$
 on the universal cover $\tilde{X}$ of $X$
 such that $\pi^* \omega=\frac{dd^c \f}{\f}$
 and $\frac{\f \circ f}{\f}=c_f$ is constant for all deck transformations $f$.
 
 The manifold $(X,\omega)$ is {\it Vaisman} if $\nabla \theta=0$, where $\nabla$ denotes the 
 Levi-Civita connection associated to $\omega$.  Vaisman manifolds are lck with potential.
 \end{defi}
 
The $1$-form $\theta$ decomposes as $\theta=\alpha+\overline{\alpha}$ where
$\alpha$ is a $(1,0$)-form which is $\partial$-closed and such that $\overline{\partial} \alpha=-\partial \overline{\alpha}$.
Thus
$$
d \omega \wedge d^c \omega=i \alpha \wedge \overline{\alpha} \wedge \omega^2 \geq 0.
$$
When $n=\dim_{\C} X \geq 3$, we observe moreover that this $(3,3)$-form is non zero
if $\alpha \neq 0$, which is  the case if $X$ is not K\"ahler.

\smallskip

Since $d \omega \wedge d^c \omega \geq 0$   does not vanish, we have observed  after
Theorem \ref{thm:chiose} that $dd^c \omega$ cannot be positive.
 We therefore   investigate whether    $\omega$ is plurinegative.

  \begin{prop} \label{pro:vaisman}
 Let $(X,\omega)$ be a compact hermitian manifold   which is {\it lcK with potential}.
Let $\f:\tilde{X} \rightarrow \R_+^*$ be a smooth positive  plurisubharmonic  function 
 on the universal cover of $X$
 such that $\pi^* \omega=\frac{dd^c \f}{\f}$.
 
 If $(X,\omega)$ is Vaisman then $\p=\log \f$ is plurisubharmonic and for all $k \geq 1$
 $$
 dd^c \omega^k =-k(dd^c \p)^{k+1} \leq 0.
 $$
 In particular $dd^c \omega \leq 0$ and 
\begin{itemize}
\item $v_-(X,\omega)>0$ if $\dim_{\C} X=3$;
\item  there is no pluripositive hermitian metric $\tilde{\omega}$ on $X$.
\end{itemize}

Conversely if $\p=\log \f$ is plurisubharmonic  then $(X,\omega)$ is Vaisman.
 \end{prop}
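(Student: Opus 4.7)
I would first derive the key identity on the universal cover. Setting $\psi := \log\f$, the pointwise identity $dd^c\log\f = dd^c\f/\f - d\f\wedge d^c\f/\f^2$ gives
\[
\pi^*\omega \;=\; \frac{dd^c\f}{\f} \;=\; dd^c\psi + d\psi\wedge d^c\psi \;=\; i\partial\bar\partial\psi + i\,\partial\psi\wedge\bar\partial\psi.
\]
Since $\f\circ f = c_f\f$ with $c_f > 0$ constant for each deck transformation $f$, the shift $\psi\circ f - \psi = \log c_f$ is constant, so $d\psi$, $\beta := dd^c\psi$ and $\eta := d\psi\wedge d^c\psi = i\,\partial\psi\wedge\bar\partial\psi$ all descend to globally-defined forms on $X$, yielding the global decomposition $\omega = \beta + \eta$ with $\eta \geq 0$ of rank one.

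Next I would establish the formula $dd^c\omega^k = -k\,\beta^{k+1}$, which is purely algebraic and needs no further hypothesis. Since $\eta^2 = 0$ (a rank-one $(1,1)$-form), we have $\omega^k = \beta^k + k\,\beta^{k-1}\wedge\eta$. The real $(1,1)$-form $\beta$ is $d$-closed and therefore $d^c$-closed (a closed real $(1,1)$-form splits into $(2,1)$- and $(1,2)$-pieces of distinct bidegrees which must vanish separately), so $dd^c\beta^k = 0$ and $dd^c(\beta^{k-1}\wedge\eta) = \beta^{k-1}\wedge dd^c\eta$. A short Leibniz calculation using $d^c d\psi = -dd^c\psi$ gives $d^c\eta = -\beta\wedge d^c\psi$ and then $dd^c\eta = -\beta\wedge dd^c\psi = -\beta^2$, whence $dd^c\omega^k = -k\,\beta^{k+1}$.

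The main obstacle is to establish $\beta \geq 0$ under the Vaisman hypothesis. Here I would invoke the Ornea--Verbitsky structure theorem (see \cite{OV11, OV20, ornea-verbitsky-book}): a compact lcK manifold with potential is Vaisman precisely when its universal K\"ahler cover $(\tilde X, dd^c\f)$ is isomorphic to a K\"ahler cone $C(S) = S\times\R_+^*$ over a compact Sasakian manifold $S$, with $\f = t^2$ the squared radial coordinate. In this model $\psi = 2\log t$, and $dd^c\psi$ is the pullback of the transverse K\"ahler form of the Sasakian structure, which is semi-positive of rank exactly $n-1$, its kernel being spanned by the Reeb direction. Therefore $\beta \geq 0$, $\psi$ is psh, and $dd^c\omega^k = -k\,\beta^{k+1} \leq 0$ for every $k\geq 1$; in particular $dd^c\omega\leq 0$.

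The two consequences then follow easily. When $n=3$, Theorem \ref{thm:plurinegpos3} applied to the plurinegative metric $\omega$ gives $v_-(X,\omega)>0$. For the non-existence of a pluripositive $\tilde\omega$ in dimension $n\geq 3$, I would apply Theorem \ref{thm:hbd+} to the positive current $\tau := \omega^{n-2}$ of bidimension $(2,2)$: the formula yields $dd^c\tau = -(n-2)\,\beta^{n-1}\leq 0$, and this does not vanish identically since in the cone model $\beta$ has maximal rank $n-1$, so $\beta^{n-1}$ is essentially the transverse volume form of $S$. Finally, for the converse, if $\psi$ is psh then $\f$ is a positive psh K\"ahler potential on $(\tilde X, dd^c\f)$ satisfying the automorphy $\f\circ f = c_f\f$; the Ornea--Verbitsky theory identifies such data with a K\"ahler cone structure over a Sasakian manifold, so $(X,\omega)$ is Vaisman.
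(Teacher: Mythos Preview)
Your derivation of the identity $dd^c\omega^k=-k\beta^{k+1}$ is correct and matches the paper's computation essentially line by line. The real point of comparison is how you show $\beta=dd^c\psi\geq 0$ under the Vaisman hypothesis, and how you prove the converse.

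You invoke the Ornea--Verbitsky structure theorem to identify the universal cover with a K\"ahler cone over a Sasakian manifold, from which the semi-positivity and the rank of $\beta$ are read off from the transverse K\"ahler structure. The paper instead proceeds by elementary linear algebra: writing $\theta=-d\psi$, one has $\beta=\omega-\theta\wedge\theta^c$, so the eigenvalues of $\beta$ with respect to $\omega$ are $1$ with multiplicity $n-1$ and $\lambda=1-|\theta|_\omega^2$. The Vaisman condition $\nabla\theta=0$ forces $|\theta|_\omega^2$ to be constant, and after the standard normalization $|\theta|_\omega^2\equiv 1$, whence $\beta\geq 0$ with rank exactly $n-1$. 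This avoids the structure theorem entirely and gives the rank statement (needed to show $\beta^{n-1}\neq 0$) without appealing to the cone model; note that the paper also obtains $\mathrm{rank}\,\beta\geq n-1$ directly from $\omega=\beta+\eta$ with $\omega$ of rank $n$ and $\eta$ of rank $1$.

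For the converse, your appeal to Ornea--Verbitsky is somewhat sketchy: it is not immediate that plurisubharmonicity of $\log\f$ alone forces a cone structure. The paper's argument is cleaner and self-contained: the formula $\beta^n=-\frac{1}{n-1}dd^c\omega^{n-1}$ together with Stokes gives $\int_X\beta^n=0$, while the eigenvalue computation yields $\beta^n=(1-|\theta|_\omega^2)\omega^n$. If $\psi$ is psh then $1-|\theta|_\omega^2\geq 0$, so the vanishing integral forces $|\theta|_\omega^2\equiv 1$, which characterizes the Vaisman condition. Your route is valid given the right black boxes, but the paper's is more direct and tells you exactly where the obstruction lies.
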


 The last statement of this Proposition is due to Ornea-Verbitsky \cite[Corollary 2.4]{OV20}.
 
 \begin{proof}
 Slightly abusing notation we identify $\omega$ with the invariant form $\pi^* \omega$.
 We set $\p=\log \f$ and observe that
 $$
 dd^c \p=\frac{dd^c \f}{\f}-\frac{d\f \wedge d^c \f}{\f^2}
 \; \; \text{ and } \; \;
 d \p \wedge d^c \p=\frac{d\f \wedge d^c \f}{\f^2}
 $$
 hence $ \omega=dd^c \p+d\p \wedge d^c \p$.
 Observe that $d\p \wedge d^c \p$ has rank $1$, while $\omega$ has rank $n$,
 so the rank of $dd^c \p$ is at least $n-1$.
 
 Since $d\p \wedge d^c \p$ has rank $1$, we obtain
 $$
 \omega^k=(dd^c \p)^k+k(dd^c \p)^{k-1} \wedge d\p \wedge d^c \p
 $$
 and
 \begin{equation} \label{eq:lck}
  dd^c \omega^k=k(dd^c \p)^{k-1} \wedge dd^c (d\p \wedge d^c \p)=-k(dd^c \p)^{k+1} \leq 0
 \end{equation}
 if $\p=\log \f$ is psh.
  For $k=1$ we obtain $dd^c \omega \leq 0$, so it
     follows from Theorem \ref{thm:plurinegpos3} that $v_-(\omega)>0$   when 
   $\dim_{\C} X=3$.
 
 Assume now that $\tilde{\omega}$ is a hermitian metric on $X$ such that
 $dd^c \tilde{\omega} \geq 0$. It follows from Stokes theorem that
 $$
 0 \leq \int_X dd^c \tilde{\omega} \wedge \omega^{n-2} =\int_X  \tilde{\omega} \wedge dd^c \omega^{n-2} \leq 0,
 $$
 hence $\tilde{\omega}$ is pluriclosed and $-dd^c \omega^{n-2}=(dd^c \p)^{n-1} \equiv 0$.
 The latter equality implies that $dd^c \p$ has rank $\leq n-2$, a contradiction.
 
 \smallskip
 
 It remains to understand when $\p=\log \f$ is plurisubharmonic.
 Recall that $d \omega=\theta \wedge \omega$ with $\theta=-d\p$.
 Set $\theta^c:=\frac{1}{2i}(\alpha-\overline{\alpha})=-d^c \p$ so that
 $d^c \omega=\theta^c \wedge \omega$. Thus
 $$
 \beta:=dd^c \p=-d \theta^c=\omega-\theta \wedge \theta^c
 $$
 is a real $(1,1)$-form whose  eigenvalues with respect to $\omega$ are $1$, with multiplicity $(n-1)$,
 and $\lambda=1-|\theta|_{\omega}^2$, since
 $
 \beta^n=\omega^n -n \omega^{n-1} \wedge \theta \wedge \theta^c
 =\left[1-|\theta|_{\omega}^2 \right] \omega^n.
 $
 Thus $\p$ is psh if and only if $|\theta|_{\omega}^2 \leq 1$.
 
 When $X$ is Vaisman then $|\theta|_{\omega}^2$ is constant and 
 there exists a unique conformal choice of $\omega$
 such that $|\theta|_{\omega}^2 \equiv 1$ (see \cite[Remark p232]{V82}).
 In this case $\p$ is psh hence $\beta \geq 0$ and $\beta^n \equiv 0$.
 Conversely it follows from \eqref{eq:lck} that
 $$
 \beta^n=(dd^c \p)^n=-\frac{1}{n-1} dd^c \omega^{n-1}.
 $$
 Stokes theorem ensures that $\int_X \beta^n =0$.
 Thus either $|\theta|_{\omega}^2 \equiv 1$,
 then $X$ is Vaisman and $\omega$ is a Gauduchon metric
 \cite[Corollary 2.4]{OV20},
 or $\lambda$ changes signs and $\p$ is not plurisubharmonic.
 \end{proof}

\begin{remark}
The existence of plurinegative metrics on Vaisman manifolds can also be deduced from the Ornea-Verbitsky Embedding theorem for Vaisman manifolds \cite{OV10} and the Example \ref{exa:hopf} below.
More generally, manifolds admitting locally conformally K\"ahler metrics with potential (this class includes Vaisman manifolds too) can be holomorphically embedded into linear Hopf manifolds \cite{OV10}. Therefore the problem concerning the existence of plurinegative hermitian metrics on locally conformally K\"ahler manifolds with potential is reduced to study linear Hopf manifolds (which are Vaisman if and only if the generator of the fundamental group is diagonalizable, see e.g. \cite[Theorem 16.3]{ornea-verbitsky-book}).
\end{remark}

 \begin{exa}\label{exa:hopf}
Let $X$ be a diagonal Hopf manifold $X=\C^n \setminus \{0\} / \sim$, where we identify $z$ and $A z$ in $\C^n \setminus \{0\}$,
   for some diagonal matrix $A={\rm Diag}(\lambda_1,\ldots,\lambda_n)$ with entries $\lambda_j \in \D^*$.
    We choose $\alpha_i:=\frac{\log 2}{2 (-\log |\lambda_i|)}$  and set
   $$
   \f(z)=\sum_{i=1}^n |z_i|^{2\alpha_i},
\; \;    \text{ so that } \; \; 
   \p=\log \f \in PSH(\C^n).
   $$
    Then $\f \circ A(z)=\frac{1}{2} \f(z)$ hence
   $ 
   \omega(z):=\frac{dd^c \f(z)}{\f(z)}=dd^c \p(z)+d\p \wedge d^c \p(z)
   $
   defines a hermitian form on $X$ such that   $   dd^c \omega \leq 0$.
   \end{exa}

      \subsubsection{The classical Hopf $3$-fold}

      We consider here $X=\C^3 \setminus \{0\} / \sim$, where we identify $z$ and $\lambda z$ in $\C^3 \setminus \{0\}$,
   for some $\lambda \in \D^*=\{ \zeta \in \C, 0<|\zeta|<1\}$.
      There is  a natural holomorphic map $f:X \rightarrow \PP^{2}$ with elliptic fibers.
  
  Set $\eta=\sum_{j=1}^n \z_j dz_j=\partial |z|^2$.
   The invariant $(1,1)$-form   $|z|^{-4}i \eta \wedge \overline{\eta}$
      compensates the lack of positivity of $f^* \omega_{FS}$
   so that the invariant $(1,1)$ form
   $$
   \omega=f^* \omega_{FS}+|z|^{-2}i \eta \wedge \overline{\eta}=
   \frac{\sum_{j=1}^n i dz_j \wedge d \z_j}{|z|^2}=\frac{dd^c \f}{\f}
   $$
    induces a hermitian form
   on $X$ that we still denote by $\omega$. 
   Since 
    $\log \f=\log|z| \in PSH(\C^3\setminus \{0\})$,
   it follows from Proposition \ref{pro:vaisman}   that $dd^c \omega \leq 0$.

 \smallskip

 We would like to test the finiteness of $v_+(\omega)$.
     It follows from the proof of Theorem \ref{thm:plurinegpos3}.1  that there exists $C>0$ such that for any
   $u \in PSH(X,\omega)$ normalized by $\sup_X u=0$, we have
   $$
   3 \int_X (-dd^c \omega) \wedge du \wedge d^c u-C \leq \int_X (\omega+dd^c u)^3 
   \leq  3 \int_X (-dd^c \omega) \wedge du \wedge d^c u+C.
   $$
   
   It is thus tempting to think that one can reach $v_+(\omega)=+\infty$ by constructing
   $\omega$-psh functions whose gradient does not belong to $L^2$.
   There are several   functions $v \in PSH(\PP^2,\omega_{FS})$ whose gradient does not belong
   to $L^2$;
   they induce
   functions 
   $$
   u=v \circ f \in PSH(X,f^*\omega_{FS}) \subset PSH(X,\omega)
   $$
   with gradient $\nabla u \notin L^2(X)$. However 
   $d v \circ f $ is proportional to $\eta$,    hence
   $$
   \int_X (-dd^c \omega) \wedge d (v \circ f)  \wedge d^c (v \circ f)=0.
   $$

     One can construct singular $\omega$-psh functions that do not come from $\PP^2$
     as follows: 
     the function $\rho=\log dist_{\omega}(\cdot,p)$
      is psh   near $p \in X$ and smooth off $p$.
     We multiply it by a cut-off function $\chi$
     which is identically equal to $1$ near $p$. For $\e>0$
     small, the function $\f=\e \chi \rho$ belong to $PSH(X,\omega)$
     and it has a logarithmic singularity at $p$. However  $\nabla \f \in L^2(X)$
     as the singularity of $\f$ is isolated.
     One could consider a convergent series of such functions, this would produce
     examples with a discrete set of logarithmic singularities (and possibly an
     uncountable polar set $(\f=-\infty)$), but their global gradient
     would still belong to $L^2$.

  \begin{quest}
  Does one have $v_+(\omega)<+\infty$?
  \end{quest}

   We tend to expect a positive answer to this problem, but a negative one would be quite
   interesting as well!

    \section{Homogeneous examples}  \label{sec:pluripositive}

We finally study  several classes of compact (locally) homogeneous manifolds.

     \subsection{Complex parallelizable manifolds}
     
     Let $X$ be a compact complex manifolds of dimension $n$ that
     is {\it complex parallelizable}, i.e. such that the holomorphic
     tangent bundle $T^{1,0}X$ is holomorphically trivial.
       It has been shown by Wang \cite{Wan54} that $X=\Gamma\backslash G$ is the quotient
     of a connected and simply connected complex Lie group $G$ by 
     a discrete subgroup $\Gamma$, and that
     complex tori (for which $G=\C^n$) are the only ones that are K\"ahler. On the other side, they always admits balanced metrics by \cite[Proposition 3.1]{abbena-grassi} and they cannot admit pluriclosed metrics, see \cite[page 7110]{fino-grantcharov-vezzoni}.

     \begin{prop} \label{pro:parallel}
    A compact complex parallelizable manifold $X$ admits a 
    hermitian metric $\omega$ such that $dd^c \omega^k \geq 0$ for all $k \geq 1$.
    
      In particular, $v_+(X,\omega)<+\infty$ when when $\dim_{\C} X = 3$. Moreover $X$ does not admit any plurinegative hermitian metric unless it is a complex torus.
     \end{prop}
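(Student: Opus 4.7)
The plan is to exhibit an explicit left-invariant Hermitian metric and to compute $dd^c\omega^k$ directly, exploiting the holomorphic parallelism. Let $e_1,\ldots,e_n$ be a basis of left-invariant holomorphic $(1,0)$-forms on $G$, dual to the canonical holomorphic trivialization of $T^{1,0}X$; since $G$ is a complex Lie group these satisfy $\bar\partial e_j=0$ and hence $\partial\bar e_j=0$. Set
\[
\omega:=i\sum_{j=1}^n e_j\wedge\bar e_j,
\]
which descends to a Hermitian metric on $X=\Gamma\backslash G$. The vanishing of $\bar\partial e_j$ and $\partial\bar e_j$ is the key structural input throughout.

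First I would establish $dd^c\omega^k\geq 0$ for every $k\geq 1$. Expanding gives $\omega^k=i^k k!(-1)^{k(k-1)/2}\sum_{|I|=k}e_I\wedge\bar e_I$, and a short Leibniz computation exploiting $\bar\partial e_I=0=\partial\bar e_I$ yields
\[
dd^c(e_I\wedge\bar e_I)=(-1)^k i\,\partial e_I\wedge\overline{\partial e_I}.
\]
Collecting prefactors and using that $k(k+1)$ is even, the scalar in front simplifies to $i^{(k+1)^2}$, giving
\[
dd^c\omega^k=k!\sum_{|I|=k}i^{(k+1)^2}\partial e_I\wedge\overline{\partial e_I},
\]
a sum of strongly positive $(k+1,k+1)$-forms since each $\partial e_I$ is of pure type $(k+1,0)$. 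In particular $dd^c\omega\geq 0$, and when $n=3$ the $n=3$ case of Theorem~\ref{thm:higher-dim} (whose hypothesis reduces to $dd^c\omega\geq 0$) yields $v_+(X,\omega)<+\infty$.

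For the non-existence claim, suppose for contradiction that $\tilde\omega$ is a plurinegative Hermitian metric on $X$. Stokes' theorem gives the duality
\[
\int_X\tilde\omega\wedge dd^c\omega^{n-2}=\int_X dd^c\tilde\omega\wedge\omega^{n-2},
\]
whose left-hand side is $\geq 0$ (positivity of $\tilde\omega$ and $dd^c\omega^{n-2}$) and whose right-hand side is $\leq 0$. Both must vanish, forcing $\tilde\omega\wedge dd^c\omega^{n-2}=0$ pointwise; strict positivity of $\tilde\omega$ then gives $dd^c\omega^{n-2}=0$. By the explicit formula above, this is equivalent to $\partial e_I=0$ for every multi-index $I$ with $|I|=n-2$.

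The hard part is the final combinatorial step: showing that this forces the Lie algebra $\mathfrak g$ of $G$ to be abelian. Writing $\partial e_i=\sum_{j<k}D^i_{jk}e_j\wedge e_k$ and letting $I^c=\{a,b\}$, the coefficient of $e_1\wedge\cdots\wedge\widehat{e_p}\wedge\cdots\wedge e_n$ in $\partial e_I$ equals $\pm D^p_{ab}$ for $p\in I$; its vanishing for every such $I$ gives $D^i_{jk}=0$ whenever $i,j,k$ are pairwise distinct. Hence $\partial e_i=e_i\wedge\gamma_i$ for some $(1,0)$-form $\gamma_i=\sum_{j\neq i}\lambda^i_j e_j$, and another Leibniz computation yields $\partial e_I=\pm\, e_I\wedge\sum_{i\in I}\gamma_i$. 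Demanding this vanish forces $\sum_{i\neq a,b}\lambda^i_a=0$ for every $b\neq a$; taking differences shows $\lambda^i_a$ is independent of $i\neq a$, say $\lambda^i_a=\mu_a$, and substituting back gives $(n-2)\mu_a=0$, hence $\mu_a=0$ for $n\geq 3$. So every $\gamma_i=0$, each $\partial e_i=0$, and $\mathfrak g$ is abelian, meaning $G=\mathbb C^n$ and $X$ is a complex torus.
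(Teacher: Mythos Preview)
Your computation of $dd^c\omega^k$ is essentially identical to the paper's, including the key identity $i^{k^2+1}(-1)^k=i^{(k+1)^2}$. One small slip: for a general $(k+1,0)$-form $\beta$, the form $i^{(k+1)^2}\beta\wedge\bar\beta$ is only \emph{weakly} positive (it is strongly positive when $\beta$ is decomposable); the paper makes exactly this distinction. This does not affect the argument, since weak positivity is all that is needed.

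The non-existence of plurinegative metrics is where your proof genuinely differs. The paper simply notes that $dd^c\omega\neq 0$ unless $X$ is a torus, and then invokes the Hahn--Banach obstruction (Theorem~\ref{thm:hbd-}). For that theorem one needs a positive current $\tau$ of bidimension $(2,2)$, i.e.\ bidegree $(n-2,n-2)$, with $dd^c\tau\geq 0$ and $dd^c\tau\neq 0$; so in dimension $n\geq 4$ one must know $dd^c\omega^{n-2}\neq 0$, which the paper does not verify explicitly (it could alternatively appeal to the cited fact from \cite{fino-grantcharov-vezzoni} that non-tori complex parallelizable manifolds admit no pluriclosed metrics). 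Your route avoids this: you run the Stokes duality directly, reduce to $dd^c\omega^{n-2}=0$, and then give a clean Lie-algebraic argument showing this forces $\mathfrak g$ to be abelian. The combinatorics are correct: the coefficient extraction giving $D^p_{ab}=0$ for $p,a,b$ distinct, the rewriting $\partial e_i=e_i\wedge\gamma_i$, the formula $\partial e_I=\pm\,e_I\wedge\sum_{i\in I}\gamma_i$, and the linear system $(n-2)\mu_a=0$ all check out. Your approach is thus more self-contained in higher dimension, at the cost of the extra combinatorial paragraph; the paper's approach is shorter but leans on an external reference or leaves a step implicit.
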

     
     Complex parallelizable manifolds that arise as quotients of complex semisimple Lie groups
      (e.g. compact quotients of $SL(2;\mathbb C)$ by a lattice) have been studied in \cite{yachou}.
          For a classification of  compact complex parallelizable solvmanifolds in complex dimension $3$
     --as well as a partial classification in dimension $4$ and $5$--,
     we refer the reader to \cite[Theorem 1 and Section 6]{nakamura}.

   \begin{proof}
     By definition there exist 
     $\f_1,\ldots,\f_n \in H^0(X,\Omega_X^1)$ holomorphic
     $1$-forms that are linearly independent at each point. We set
     $$
     \omega:=\sum_{j=1}^n i \f_j \wedge \overline{\f_j}.
     $$
     This is  a hermitian $(1,1)$-form such that 
     $$
     dd^c \omega=i \partial \overline{\partial }\omega=\sum_{j=1}^n i^2 \partial \overline{\partial }(\f_j \wedge \overline{\f_j}).
     $$
     Since $\overline{\partial} \f_j=0$ and $\partial \overline{\f_j}=0$ we obtain, setting $\eta_j=\partial \f_j$,
  $$
  dd^c \omega=\sum_{j=1}^n -i^2 \eta_j \wedge \overline{\eta_j} \geq 0.
  $$
  Recall indeed that if $\eta$ is  a $(2,0)$-form then $-i^2 \eta \wedge \overline{\eta}=i^4 \eta \wedge \overline{\eta}$
  is a weakly positive $(2,2)$-form (that is strongly positive if 
  $\eta=\alpha_1 \wedge \alpha_2$ is decomposable).
  
  \smallskip
  
For $k \geq 1$, we decompose 
  $$
  \omega^k=\sum_{j_1,\ldots,j_k=1}^n i \f_{j_1} \wedge \overline{\f_{j_1}} \wedge \cdots \wedge  i \f_{j_k} \wedge \overline{\f_{j_k}}
  =\sum_{J=(j_1,\ldots,j_k)  } i^{k^2}  \alpha_J \wedge \overline{\alpha_J},
  $$
  where $\alpha_J=\f_{j_1} \wedge \cdots \wedge \f_{j_k} $ is a holomorphic $k$-form. Thus
  $$
  i \partial \overline{\partial}\omega^k=
  \sum_{J=(j_1,\ldots,j_k)  } i^{1+k^2} (-1)^k \beta_J \wedge \overline{\beta_J},
  $$
  with $\beta_J=\partial \alpha_J$.
  Since $i^{1+k^2} (-1)^k=i^{(k+1)^2}$, we conclude that $dd^c \omega^k \geq 0$.
  
  Observe that $dd^c \omega \neq 0$ unless 
  $\omega$ is K\"ahler, in which case  $X$ is a complex torus.
  It follows therefore from Theorem \ref{thm:plurinegpos3} and Theorem \ref{thm:hbd-} that
  $v_+(X,\omega)<+\infty$ when $\dim_{\C} X = 3$, and
   $X$ does not admit a plurinegative hermitian metric unless it is a torus.
  \end{proof}

In dimension higher than $3$, the conditions needed to control $v_+$ as in Theorem \ref{thm:higher-dim} are not trivially satisfied. For example, take the complex parallelizable manifold of complex dimension $n=4$ characterized by a coframe of holomorphic $1$-forms $(\varphi^j)_{j\in\{1,2,3,4\}})$ with structure equations
$$ 
d\varphi^1=d\varphi^2=d\varphi^3=0, \quad d\varphi^4=-\varphi^2\wedge\varphi^3 ,
$$
see \cite[Type IV.2, page 108]{nakamura}. 
The metric $\omega=\sum_{j=1}^{4}\sqrt{-1}\varphi^j\wedge\bar\varphi^j$ satisfies:
\begin{eqnarray*}
dd^c\omega &=& \sqrt{-1} \varphi^2\wedge\bar\varphi^2 \wedge \sqrt{-1} \varphi^3\wedge\bar\varphi^3 \geq0, \\
dd^c\omega^2 &=& 2\,\sqrt{-1} \varphi^1\wedge\bar\varphi^1\wedge \sqrt{-1}\varphi^2\wedge\bar\varphi^2\wedge\sqrt{-1}\varphi^3 \wedge\bar\varphi^3 \geq0, 
\end{eqnarray*}
but
\begin{eqnarray*}
dd^c\omega^2-\varepsilon dd^c\omega\wedge\omega &=&
(2-\varepsilon)\,\sqrt{-1} \varphi^1\wedge\bar\varphi^1\wedge \sqrt{-1}\varphi^2\wedge\bar\varphi^2\wedge\sqrt{-1}\varphi^3 \wedge\bar\varphi^3 \\
&& -\varepsilon\,\sqrt{-1} \varphi^2\wedge\bar\varphi^2\wedge \sqrt{-1}\varphi^3\wedge\bar\varphi^3\wedge\sqrt{-1}\varphi^4 \wedge\bar\varphi^4 
\end{eqnarray*}
 is never semi-positive for $ \e>0$.

\smallskip

The following is an example of a complex parallelizable manifold admitting metrics with the property of Theorem \ref{thm:higher-dim}: if a lattice can be provided for the Lie group, then we would get a compact manifold with $v_+<\infty$.

\begin{example}\label{ex:v+-complex-parallelizable-higher-dim}
Consider the complex parallelizable solvmanifold of complex dimension $n=4$ of type IV.5 in \cite[page 108]{nakamura}, which is characterized by a coframe of holomorphic $1$-forms with structure equations
$$ d\varphi_1=0, \quad d\varphi^2=\varphi^1\wedge\varphi^2, $$
$$ d\varphi^3=\alpha\varphi^1\wedge\varphi^3, \quad d\varphi^4=-(1+\alpha)\varphi^1\wedge\varphi^4, $$
depending on a parameter $\alpha\in\mathbb R\setminus\{-1,0\}$.
It is not known whether it admits lattices, see \cite[page 110]{nakamura}.
Consider the left-invariant metric
$$
\omega = a_{1} \, \sqrt{-1}\varphi^1\wedge\bar\varphi^1 + a_{2}   \, \sqrt{-1}\varphi^2\wedge\bar\varphi^2 + a_{3} \, \sqrt{-1}\varphi^3\wedge\bar\varphi^3 + a_{4} \, \sqrt{-1}\varphi^4\wedge\bar\varphi^4 ,
$$
where $a_1,a_2,a_3,a_4>0$.
A straightforward computation yields
\begin{eqnarray*}
dd^c\omega
&=& a_{2} \, \sqrt{-1}\varphi^1\wedge\bar\varphi^1 \wedge \sqrt{-1}\varphi^2\wedge\bar\varphi^2 + a_{3} |\alpha|^2 \, \sqrt{-1}\varphi^1\wedge\bar\varphi^1 \wedge \sqrt{-1}\varphi^3\wedge\bar\varphi^3 \\
&& + a_{4} |\alpha + 1|^2 \, \sqrt{-1}\varphi^1\wedge\bar\varphi^1 \wedge \sqrt{-1}\varphi^4\wedge\bar\varphi^4 \geq 0 ,\\
dd^c\omega^2
&=& 2 \, a_{2} a_{3} |\alpha + 1|^2 \, \sqrt{-1}\varphi^1\wedge\bar\varphi^1 \wedge  \sqrt{-1}\varphi^2\wedge\bar\varphi^2 \wedge  \sqrt{-1}\varphi^3\wedge\bar\varphi^3 \\
&& + 2 \, a_{2} a_{4} |\alpha|^2 \, \sqrt{-1}\varphi^1\wedge\bar\varphi^1 \wedge \sqrt{-1}\varphi^2\wedge\bar\varphi^2 \wedge \sqrt{-1}\varphi^4\wedge\bar\varphi^4 \\
&& + 2 \, a_{3} a_{4} \sqrt{-1}\varphi^1\wedge\bar\varphi^1 \wedge \sqrt{-1}\varphi^3\wedge\bar\varphi^3 \wedge \sqrt{-1}\varphi^4\wedge\bar\varphi^4 \geq 0.
\end{eqnarray*}
Therefore
\begin{eqnarray*}
\lefteqn{ dd^c\omega^2-\varepsilon\omega\wedge dd^c\omega } \\
&=&
\left( - |\alpha|^2 \varepsilon + 2 |\alpha + 1|^2 - \varepsilon \right) a_{2} a_{3} \sqrt{-1}\varphi^1\wedge\bar\varphi^1 \wedge \sqrt{-1}\varphi^2\wedge\bar\varphi^2 \wedge \sqrt{-1}\varphi^3\wedge\bar\varphi^3 \\
&& + \left( - |\alpha + 1|^2 \varepsilon + 2 |\alpha|^2 - \varepsilon \right) a_{2} a_{4} \sqrt{-1}\varphi^1\wedge\bar\varphi^1 \wedge \sqrt{-1}\varphi^2\wedge\bar\varphi^2 \wedge \sqrt{-1}\varphi^4\wedge\bar\varphi^4 \\
&& + \left( - |\alpha + 1|^2 \varepsilon - |\alpha|^2 \varepsilon + 2 \right) a_{3} a_{4} \sqrt{-1}\varphi^1\wedge\bar\varphi^1 \wedge \sqrt{-1}\varphi^3\wedge\bar\varphi^3 \wedge \sqrt{-1}\varphi^4\wedge\bar\varphi^4 .
\end{eqnarray*}
Therefore, if we take
$$ 
0<\varepsilon \leq \min\left\{ \frac{2|\alpha+1|^2}{1+|\alpha|^2}, \frac{2|\alpha|^2}{1+|\alpha+1|^2}, 
\frac{2}{|\alpha|^2+|1+\alpha|^2} \right\} , 
$$
we obtain $dd^c\omega^2-\varepsilon\omega\wedge dd^c\omega\geq0$.
\end{example}

    \subsection{Six-dimensional nilmanifolds} \label{sec:nilmanifolds}

    In this section, we consider {\em nilmanifolds}, namely, compact quotients $\Gamma\backslash G$ of connected simply-connected nilpotent Lie groups $G$ by co-compact discrete subgroups $\Gamma$.
We recall that a Lie group $G$ is called {\em nilpotent} if its associated Lie algebra $\mathfrak g$ satisfies that the lower central series $\{\mathfrak{g}_j:=[\mathfrak{g},\mathfrak{g}_{j-1}]\}_{j\in\mathbb N}$, with $\mathfrak{g}_0:=\mathfrak{g}$, eventually vanishes.
In dimension $6$, according to \cite{morozov, magnin}, there are only $34$ isomorphism classes of nilpotent Lie algebras over $\mathbb R$, and $10$ of them are reducible.

We consider {\em left-invariant complex structures} on $\Gamma \backslash G$, i.e. complex structures that are induced by complex structures on $G$ being invariant under left-translations. Equivalently, left-invariant complex structures correspond to linear complex structures on the corresponding Lie algebra satisfying an integrability condition. According to \cite{salamon}, only $18$ of the above $34$ nilpotent Lie algebras admit left-invariant complex structures.
We notice that the existence of lattices in nilpotent Lie groups is well-understood thanks to \cite{malcev}, more precisely it corresponds to having rational constant structures, a condition that is satisfied in all the above considered cases.

Among the latter, only four may admit a hermitian metric $\omega$ which is pluriclosed, as shown by Fino-Parton-Salamon in \cite{FPS04}.
We show in this section that the remaining 14 classes all admit a pluripositive hermitian metric, 
and we also analyze whether there exists a balanced metric, i.e. a hermitian
metric $\omega$ such that $d \omega^{2}=0$.

\smallskip

Left-invariant complex structures on six-dimensional nilmanifolds are gathered in four families
in \cite{andrada-barberis-dotti, ugarte-transf, ugarte-villacampa, couv}, 
some of them depending on continuous parameters, up to {\em linear} equivalence.
These families are described by a coframe of left-invariant $(1,0)$-forms $(\varphi^1,\varphi^2,\varphi^3)$ with structure equations as follows (we use the short-hands $\varphi^{1\bar2}:=\varphi^1\wedge\bar\varphi^2$, etc.).
\begin{description}
\item[(Np)] $d\varphi^1=d\varphi^2 = 0$, $d\varphi^3=\rho\, \varphi^{12}$ where $\rho\in\{0,1\}$; 
these are the {\em complex parallelizable} structures \cite{Wan54, nakamura}. 
The manifold is a compact quotient of a {\em complex} Lie group by a discrete subgroup; 
$\rho=0$ corresponds to the complex torus, 
while $\rho=1$ corresponds to the Iwasawa manifold \cite{fernandez-gray}.

\smallskip

\item[(Ni)] $d\varphi^1=d\varphi^2 = 0$, $d\varphi^3= \rho\, \varphi^{12} + \varphi^{1\bar1} + \lambda\,\varphi^{1\bar 2} + D\,\varphi^{2\bar2}$ where $\rho\in\{0,1\}$, $\lambda\in\mathbb R^{\geq0}$, $D\in\mathbb C$ with $\Im D\geq0$;
this class (and the following) contains {\em nilpotent} complex structures: the ascending series $\{\mathfrak{a}_j(J):=\{X \in \mathfrak{g} : [X,\mathfrak{g}]+[JX,\mathfrak{g}]\subseteq \mathfrak{a}_{j-1}\}\}_{j\in\mathbb N}$, with $\mathfrak{a}_0(J):=0$, eventually equals $\mathfrak{g}$. The case $\rho=0$ corresponds to {\em Abelian} complex structures, namely, the subalgebra of left-invariant $(1,0)$-vector fields is Abelian.

\smallskip

\item[(Nii)] $d\varphi^1=0$, $d\varphi^2=\varphi^{1\bar1}$, $d\varphi^3=\rho\varphi^{12}+B\,\varphi^{1\bar2}+c\,\varphi^{2\bar1}$ where $\rho\in\{0,1\}$, $B \in \mathbb C$, $c\in\mathbb R^{\geq0}$ with  $(\rho,B,c)\neq(0,0,0)$.

\smallskip

\item[(Niii)] $d\varphi^1=0$, $d\varphi^2 =\varphi^{13} + \varphi^{1\bar 3}$, $d\varphi^3=\sqrt{-1}\rho\, \varphi^{1\bar1}\pm \sqrt{-1}(\varphi^{1\bar2} - \varphi^{2\bar1})$ where $\rho\in\{0,1\}$; this class contains non-nilpotent complex structures.
\end{description}

We refer to \cite{andrada-barberis-dotti, ugarte-transf, ugarte-villacampa, couv} and \cite[Table 1]{aouv2}
 for more details on the underlying Lie algebras.

We can restrict to study {\em left-invariant} hermitian structures on $\Gamma \backslash G$, namely, hermitian structures on $G$ being invariant by the whole action of $G$ by left-translations. 
Such structures correspond to {\em linear} hermitian structures on the associated Lie algebra.
Indeed 
Belgun Symmetrization Trick \cite[Theorem 7]{belgun} (see also \cite[Theorem 2.1]{fino-grantcharov}) provides the following.

\begin{lem}[{\cite[Proposition 3.6]{ugarte-transf}}]\label{lem:average}
Let $\Gamma \backslash G$ be a compact quotient of a Lie group $G$ endowed with a left-invariant complex structure. If it admits a plurisigned hermitian metric, then it admits a {\em left-invariant} plurisigned hermitian metric.
\end{lem}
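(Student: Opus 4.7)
The plan is to implement the Belgun symmetrization trick \cite{belgun} in the plurisigned setting. Given any plurisigned hermitian metric $\omega$ on $X=\Gamma\backslash G$, i.e. $\omega$ is a positive $(1,1)$-form with $dd^c\omega$ of a fixed sign in $\{\geq 0,\leq 0,=0\}$, I would produce a left-invariant plurisigned hermitian metric by averaging $\omega$ against left-translations. As a preliminary, note that $G$ is unimodular by Mostow's theorem (since it admits a lattice), so a bi-invariant Haar volume form $\nu$ descends to a volume form on $X$, which I normalize to have total mass one.

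The first main step is to introduce the averaging operator. For a smooth form $\alpha$ on $X$, lift it to a $\Gamma$-invariant form $\tilde\alpha$ on $G$ and set
\[
\alpha^{\sharp}_g \;:=\; \int_{X} (L_x^*\tilde\alpha)_g \, d\nu(x),\qquad g\in G,
\]
where $L_x$ denotes left-translation by $x\in G$. Unimodularity guarantees this is well-defined and independent of the choice of fundamental domain for $\Gamma$. The properties I would verify are standard: $\alpha^\sharp$ is always left-invariant, so may be identified with a form on the Lie algebra $\mathfrak g$; the operator $\sharp$ is $\mathbb R$-linear and continuous and fixes left-invariant forms; it commutes with $d$, $\partial$, $\bar\partial$, and hence with $dd^c$, since $J$ is left-invariant and therefore each $L_x^*$ preserves the bidegree decomposition; finally $\sharp$ preserves the cones of weakly positive $(p,p)$-forms, as each $L_x^*$ preserves pointwise positivity and these cones are convex and closed.

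To conclude, I would apply $\sharp$ to the given $\omega$. As an average of pointwise-positive $(1,1)$-forms, the form $\omega^\sharp$ is itself a positive left-invariant $(1,1)$-form, hence a left-invariant hermitian metric on $X$. Commutativity with $dd^c$ then yields
\[
dd^c(\omega^\sharp) \;=\; (dd^c\omega)^\sharp,
\]
and if $dd^c\omega\geq 0$ (resp. $\leq 0$, resp. $=0$) pointwise, the same holds for $(dd^c\omega)^\sharp$ by positivity-preservation of $\sharp$. Thus $\omega^\sharp$ is a left-invariant plurisigned hermitian metric of the same sign as $\omega$, proving the claim. The only non-routine point is the last one: checking that pointwise positivity of $(2,2)$-forms is preserved under the averaging integral. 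This in turn reduces, at each point of $\mathfrak g$, to the convexity and closedness of the cone of weakly positive $(2,2)$-forms, which is classical.
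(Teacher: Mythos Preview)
Your proof is correct and follows essentially the same approach as the paper: both implement Belgun's symmetrization trick, using unimodularity of $G$ to average $\omega$ into a left-invariant form, then invoking commutation of the averaging operator with $\partial$, $\overline\partial$ (since $J$ is left-invariant) together with preservation of positivity. The only cosmetic differences are that the paper cites \cite{milnor} rather than Mostow for unimodularity, and verifies positivity of $\mu(dd^c\omega)$ by pairing with left-invariant test forms $\sqrt{-1}\,\eta\wedge\bar\eta$ rather than by appealing directly to convexity and closedness of the positive cone.
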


\begin{proof}
The case of pluriclosed metrics is \cite[Proposition 3.6]{ugarte-transf}. 
The same argument applies here; we include a proof for the readers' convenience.

Since $G$ admits a compact quotient, it is {\em unimodular} \cite[Lemma 6.2]{milnor}, namely its left-invariant Haar measure $\mu$ is also right-invariant. We consider the symmetrization map
$$ \mu \colon \wedge^{\bullet} (\Gamma\backslash G) \to \wedge^\bullet \mathfrak{g}^* , $$
$$
 \mu(\alpha)(X_1,\ldots,X_k):=\int_{\Gamma\backslash G} \alpha(m)(X_1(m), \ldots, X_k(m)) \mu(m) , 
 $$
where $\wedge^\bullet \mathfrak{g}^*$ is identified with the subspace of   left-invariant forms. It is clear that $\mu|_{\wedge^\bullet \mathfrak{g}^*}=\mathrm{id}$ and that $d\circ\mu=\mu\circ d$.
Since $J$ is left-invariant, we have $\mu\circ J=J\circ \mu$. Therefore, there hold also $\partial\circ\mu=\mu\circ\partial$ and $\overline\partial\circ\mu=\mu\circ\overline\partial$.

In particular, if $\omega$ is a hermitian structure, then $\mu(\omega)$ is a left-invariant $(1,1)$-form. It is straightforward to check that $\mu(\omega)>0$ is still a hermitian structure.
We also have: $\partial\overline\partial(\mu(\omega)) = \mu(\partial\overline\partial\omega)$. In particular
if $\omega$ is pluripositive (resp. plurinegative), then $\mu(\omega)$ is pluripositive (resp. plurinegative). Indeed, assume that $\omega$ is pluripositive. Then, for any $(1,0)$-form $\eta$
$$
dd^c\omega  \wedge\sqrt{-1}\eta\wedge\bar\eta=c\sqrt{-1}\varphi^{1\bar1}\wedge\sqrt{-1}\varphi^{2\bar2}\wedge\sqrt{-1}\varphi^{3\bar3}
$$
 for $c\geq0$. We can restrict to left-invariant $(1,0)$-forms $\eta$, since positivity is a pointwise notion. Then, thanks to \cite[Lemma 2.5]{angella-kasuya-1}, $\mu(\omega)\wedge\eta=\mu(\omega\wedge\eta)=\mu(c)\sqrt{-1}\varphi^{1\bar1}\wedge\sqrt{-1}\varphi^{2\bar2}\wedge\sqrt{-1}\varphi^{3\bar3}$ where $\mu(c)\geq0$.
\end{proof}

We therefore only consider left-invariant hermitian structures. With respect to a chosen coframe,
 it is straighforward to check that they are of the form
\begin{eqnarray}\label{eq:metric}
2\,\omega &=& \sqrt{-1} r^2\, \varphi^{1\bar1} + \sqrt{-1} s^2\,\varphi^{2\bar2} + \sqrt{-1} t^2\, \varphi^{3\bar3} \\
&& + u\, \varphi^{1\bar2} - \bar u\, \varphi^{2\bar1} + v\, \varphi^{2\bar3} - \bar v\, \varphi^{3\bar2} + z\, \varphi^{1\bar3} - \bar z\, \varphi^{3\bar1} \nonumber
\end{eqnarray}
where $r,s,t\in \mathbb R$, $u,v,z\in\mathbb C$ satisfy
$$ r^2>0 ,\qquad s^2>0 , \qquad t^2 > 0 , $$
$$
r^2s^2 > |u|^2 , \qquad
r^2t^2 > |z|^2 , \qquad
s^2t^2 > |v|^2 ,
$$
$$
r^2s^2t^2 + 2 \Re(\sqrt{-1}\,\bar u z \bar v) > t^2|u|^2+r^2|v|^2+s^2|z|^2 ,
$$
in order for the metric to be positive-definite.

We compute $\sqrt{-1}\partial\overline\partial\omega$ for the above families.
Note that positivity and strong positivity are equivalent for $(2,2)$-forms in dimension $3$
(see \cite[p. 279--280]{Mic82}),
and that it is enough to test positivity pairing with left-invariant forms, since it is a pointwise notion.
Recall that, by \cite{benson-gordon,hasegawa}, K\"ahler metrics do not exist on non-tori nilmanifolds (even with complex structures that are possibly non-left-invariant).

\begin{description}
\item[(Np)] For 
complex paralellizable type, $\omega$ is always balanced and
$$ \sqrt{-1}\partial\overline\partial\omega = \frac{1}{2} \rho^2t^2\sqrt{-1}\varphi^{1\bar1}\wedge\sqrt{-1}\varphi^{2\bar2} $$
is always strongly-positive. More precisely, any left-invariant hermitian metric on the torus is pluriclosed (in fact, K\"ahler), and any left-invariant hermitian metric on the Iwasawa manifold is pluripositive, balanced, non-pluriclosed.
The underlying Lie algebras are respectively $\mathfrak h_1$ and $\mathfrak h_5$, in the notation of \cite{salamon}, to which we refer for more details.

\smallskip

\item[(Ni)] For 
nilpotent type in Family I, the balanced condition is equivalent to
$$ 
s^{2} t^{2} + {\left(r^{2} t^{2} - |z|^2 \right)} D + {\left(-\sqrt{-1} \, t^{2} \overline{u} + v \overline{z}\right)} \lambda - |v|^2 = 0 
$$
(it is satisfied on $\mathfrak{h}_2$, $\mathfrak{h}_3$, $\mathfrak{h}_4$, $\mathfrak{h}_5$, $\mathfrak{h}_6$ for metrics such that $r^2 = 1$, $v = z = 0$, and $s^2 + D = \sqrt{-1}\, \bar u\, \lambda$)
and
$$
\sqrt{-1}\partial\overline\partial\omega =
\frac{t^2}{2} \left( \lambda^{2} + \rho^{2} - 2\Re D \right)  \sqrt{-1}\varphi^{1\bar1}\wedge\sqrt{-1}\varphi^{2\bar2} .
$$
Any hermitian metric is either 
\begin{itemize}
\item pluripositive non-pluriclosed (when $2\Re D<\lambda^2+\rho^2$, which happens on Lie algebras $\mathfrak h_2$, $\mathfrak h_3$, $\mathfrak h_4$, $\mathfrak h_5$, $\mathfrak h_6$),
\item  or pluriclosed (when $2\Re D=\lambda^2+\rho^2$, which happens on Lie algebras $\mathfrak h_2$, $\mathfrak h_4$, $\mathfrak h_5$, $\mathfrak h_8$), 
\item or plurinegative non-pluriclosed (when $2\Re D>\lambda^2+\rho^2$, which happens on Lie algebras $\mathfrak h_2$, $\mathfrak h_3$, $\mathfrak h_4$, $\mathfrak h_5$).
\end{itemize}

\item[(Nii)] for the complex structures of nilpotent type in Family II, the balanced condition is never satisfied, and
$$
\sqrt{-1}\partial\overline\partial\omega =
\frac{t^2}{2} \, \big( c^{2} + \rho^{2} + |B|^2 \big) \sqrt{-1}\varphi^{1\bar1} \wedge \sqrt{-1}\varphi^{2\bar2}
$$
is always strongly-positive, and never zero.
The underlying Lie algebras are $\mathfrak h_7$, $\mathfrak h_9$, $\mathfrak h_{10}$, $\mathfrak h_{11}$, $\mathfrak h_{12}$, $\mathfrak h_{13}$, $\mathfrak h_{14}$, $\mathfrak h_{15}$, $\mathfrak h_{16}$.

\item[(Niii)] For the complex structures of nilpotent type in Family II, the balanced condition is equivalent to
$$
\left\{\begin{array}{l}
t^{2} \Re u + \Im (z \overline{v}) = 0 \\
\sqrt{-1} \, s^{2} \overline{z} + \overline{u} \overline{v} = 0 \\
\rho = 0
\end{array}
\right.
$$
(it is satisfied on $\mathfrak{h}_{19}^{-}$ for metrics such that $u=z=0$)
and
$$
\sqrt{-1}\partial\overline\partial\omega =
 t^{2} \sqrt{-1} \varphi^{1\bar1} \wedge \sqrt{-1}\varphi^{2\bar2} + s^{2} \sqrt{-1} \varphi^{1\bar1} \wedge \sqrt{-1} \varphi^{3\bar3}
$$
is always strongly-positive, and never zero.
The underlying Lie algebras are $\mathfrak h_{19}^-$, $\mathfrak h_{26}^+$.
\end{description}

The following statement summarizes the previous discussion, it can be seen as an extension of 
\cite{FPS04} to plurisigned metrics.

\begin{theorem} \label{thm:nilmanifolds}
Consider a six-dimensional nilmanifold $X$ endowed with a left-invariant complex structure
and assume $X$ is not a complex torus.
Then, there is always a plurisigned metric. More precisely:
\begin{itemize}
\item either $X$ belongs to one of the families  (Np), (Nii) and (Niii) then any left-invariant hermitian metric is pluripositive but not pluriclosed.
\item or $X$ belongs to (Ni) and --depending on the complex structure--, every left-invariant hermitian metric is either 
pluriclosed, or pluripositive but not pluriclosed, or else plurinegative but not pluriclosed.
\end{itemize}
\end{theorem}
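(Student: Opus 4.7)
The plan is to parametrize, in each of the four families (Np), (Ni), (Nii), (Niii), the general left-invariant hermitian form $\omega$ by the ansatz \eqref{eq:metric} with $r,s,t \in \mathbb{R}$ and $u,v,z \in \mathbb{C}$, then compute $dd^c\omega$ using the structure equations, and finally read off its sign from the resulting formula. Since the statement concerns only left-invariant metrics, no symmetrization is required, although Lemma \ref{lem:average} is what motivates restricting to this class in the first place.

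The core computation will be carried out once for each family, by applying $\partial$ and $\overline{\partial}$ termwise to \eqref{eq:metric} using the prescribed $d\varphi^i$. A key observation I would verify carefully is that the off-diagonal parameters $u,v,z$ cancel out in every case, leaving an expression only in terms of $r,s,t$ and the structure constants. I expect to obtain
$$
dd^c\omega \;=\; \tfrac{\rho^2 t^2}{2}\,\sqrt{-1}\varphi^{1\bar1}\wedge\sqrt{-1}\varphi^{2\bar2}
$$
in (Np),
$$
dd^c\omega \;=\; \tfrac{t^2}{2}\bigl(\lambda^2+\rho^2-2\Re D\bigr)\,\sqrt{-1}\varphi^{1\bar1}\wedge\sqrt{-1}\varphi^{2\bar2}
$$
in (Ni),
$$
dd^c\omega \;=\; \tfrac{t^2}{2}\bigl(c^2+\rho^2+|B|^2\bigr)\,\sqrt{-1}\varphi^{1\bar1}\wedge\sqrt{-1}\varphi^{2\bar2}
$$
in (Nii), and
$$
dd^c\omega \;=\; t^2\,\sqrt{-1}\varphi^{1\bar1}\wedge\sqrt{-1}\varphi^{2\bar2} \,+\, s^2\,\sqrt{-1}\varphi^{1\bar1}\wedge\sqrt{-1}\varphi^{3\bar3}
$$
in (Niii). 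Each form $\sqrt{-1}\varphi^{i\bar{i}}\wedge\sqrt{-1}\varphi^{j\bar{j}}$ with $i\neq j$ is a strongly positive $(2,2)$-form, so the sign of $dd^c\omega$ is entirely controlled by the scalar coefficients.

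The conclusion will then follow by a short case analysis. In (Np), the coefficient $\rho^2 t^2$ vanishes only for $\rho=0$, which is the excluded torus case; for $\rho=1$ every left-invariant metric is pluripositive and non-pluriclosed. In (Nii) the coefficient $c^2+\rho^2+|B|^2$ is strictly positive because $(\rho,B,c)\neq(0,0,0)$ in that family, while in (Niii) the coefficients $t^2,s^2$ are manifestly positive, so every left-invariant metric is pluripositive non-pluriclosed in these two cases. In (Ni), the trichotomy $2\Re D \lessgtr \lambda^2+\rho^2$ produces exactly the three subcases of the statement, and since the sign depends only on the structure constants it is an invariant of the complex structure, not of the chosen metric. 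The main obstacle will be the careful bookkeeping needed to check that $u,v,z$ indeed drop out of $dd^c\omega$ in every family; once this cancellation is confirmed, the remainder of the proof reduces to inspecting explicit scalar coefficients, with no further analytic input required.
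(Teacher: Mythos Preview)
Your proposal is correct and follows exactly the paper's approach: the paper computes $\sqrt{-1}\partial\overline\partial\omega$ family by family, obtaining precisely the four formulas you anticipate (including the cancellation of $u,v,z$), and then reads off the sign from the scalar coefficients via the same case analysis. There is no substantive difference between your plan and the paper's argument.
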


By Corollary \ref{cor:obstruction}  the latter three conditions are mutually exclusive.

\subsection{Higher dimensional nilmanifolds}

As noticed in \cite{FPS04}, the pluriclosed condition for left-invariant metrics on six-dimensional nilmanifolds 
depends only on the complex structure, and we noticed the same behavior for the plurisigned condition in 
Theorem \ref{thm:nilmanifolds}.

This is no longer true in higher dimension for the pluriclosed condition \cite[Remark 4.1]{enrietti-fino-vezzoni}. More precisely, $8$-dimensional nilmanifolds with left-invariant complex structures admitting pluriclosed metrics are classified into two families in \cite[Section 4]{enrietti-fino-vezzoni}. For the second family, the pluriclosed condition only depends on the complex structure, while for the first family it also involves the parameters of the metric.
We make a similar observation  for the plurisigned condition.

\begin{exa}\label{exa:different-sign}
We consider here the eight-dimensional nilmanifold with left-invariant complex structure characterized by the structure equations
$$ d\varphi^1=0, \quad d\varphi^2=0, \quad  d\varphi^3=\varphi^1\wedge\bar\varphi^1+\frac{1}{2}\varphi^2\wedge\bar\varphi^2, \quad  d\varphi^4=-\varphi^1\wedge\bar\varphi^2,$$
with respect to a left-invariant coframe $\{\varphi^1,\varphi^2,\varphi^3,\varphi^4\}$ of $(1,0)$-forms. It belongs to the first family in the above mentioned classification, more precisely, it corresponds to parameters $B_4=1$, $C_4=\frac12$, $F_5=-1$, the others zero, in the notation of \cite{enrietti-fino-vezzoni}. We consider a Hermitian metric of the diagonal form
$$ \omega = \sqrt{-1} \, a_{1}  \varphi^{1} \wedge \bar\varphi^{1} + \sqrt{-1} \, a_{2} \varphi^{2} \wedge \bar\varphi^{2} + \sqrt{-1} \, a_{3} \varphi^{3} \wedge \bar\varphi^{3} + \sqrt{-1} \, a_{4} \varphi^{4} \wedge \bar\varphi^{4} , $$
where $a_1,a_2,a_3,a_4>0$. Observe that
$$ 
dd^c\omega=( -a_{3} + a_{4} )  \sqrt{-1}\varphi^{1} \wedge \bar\varphi^1\wedge\sqrt{-1}\varphi^{2} \wedge \bar\varphi^{2},
$$
showing that they can be either pluripositive, or pluriclosed, or plurinegative, 
depending on the value of $a_3/a_4$.

Note that this example does not satisfy the condition of Theorem \ref{thm:higher-dim}. Indeed,
\begin{eqnarray*}
\lefteqn{dd^c\omega^2-\varepsilon dd^c\omega\wedge\omega} \\
&=& \left(-\varepsilon a_3(a_4-a_3)+2a_3a_4\right) \sqrt{-1}\varphi^1\wedge\bar\varphi^1\wedge\sqrt{-1}\varphi^2\wedge\bar\varphi^2\wedge\sqrt{-1}\varphi^3\wedge\bar\varphi^3 \\
&& +\left(-\varepsilon a_4(a_4-a_3)-2a_3a_4\right)\sqrt{-1}\varphi^1\wedge\bar\varphi^1\wedge\sqrt{-1}\varphi^2\wedge\bar\varphi^2\wedge\sqrt{-1}\varphi^4\wedge\bar\varphi^4
\end{eqnarray*}
is not positive.
 \end{exa}

 We now provide an $8$-dimensional example which
satisfies the curvature conditions of Theorem \ref{thm:higher-dim}.

\begin{exa}\label{exa:higher-dim-plurisigned}
We consider again an example among the eight-dimensional nilmanifolds in the first family of \cite{enrietti-fino-vezzoni}. 
More precisely, take parameters $B_1= 1$, $G_3= 1$, the others zero, that is
consider 
the structure equations
$$ d\varphi^1=0, \quad d\varphi^2=0, \quad d\varphi^3=\varphi^1\wedge\varphi^2, \quad d\varphi^4=\varphi^2\wedge\bar\varphi^1.$$
Take the diagonal metric
$$ \omega = \sqrt{-1} \, a_{1}  \varphi^{1} \wedge \bar\varphi^{1} + \sqrt{-1} \, a_{2} \varphi^{2} \wedge \bar\varphi^{2} + \sqrt{-1} \, a_{3} \varphi^{3} \wedge \bar\varphi^{3} + \sqrt{-1} \, a_{4} \varphi^{4} \wedge \bar\varphi^{4} , $$
where $a_1,a_2,a_3,a_4>0$.
We compute
\begin{eqnarray*}
dd^c\omega
&=& ( a_{3} + a_{4} ) \sqrt{-1}\varphi^{1} \wedge \bar\varphi^{1} \wedge \sqrt{-1}\varphi^{2} \wedge \bar\varphi^{2} \geq 0 , \\
dd^c\omega^2 &=& 2 \, a_{3} a_{4} \sqrt{-1} \varphi^{1} \wedge \bar\varphi^{1} \wedge \sqrt{-1} \varphi^{2} \wedge \bar\varphi^{2} \wedge \sqrt{-1} \varphi^{3} \wedge \bar\varphi^{3} \\
&& + 2 \, a_{3} a_{4} \sqrt{-1} \varphi^{1} \wedge \bar\varphi^{1} \wedge \sqrt{-1} \varphi^{2} \wedge \bar\varphi^{2}
 \wedge \sqrt{-1} \varphi^{4} \wedge \bar\varphi^{4} ,
\end{eqnarray*}
hence
\begin{eqnarray*}
\lefteqn{ dd^c\omega^2-\varepsilon dd^c\omega\wedge\omega } \\
&=&
a_{3} ( -\varepsilon a_{3}   +(2-\varepsilon) a_{4}  ) \sqrt{-1} \varphi^{1} \wedge \bar\varphi^{1} \wedge \sqrt{-1} \varphi^{2} \wedge \bar\varphi^{2} \wedge \sqrt{-1} \varphi^{3} \wedge \bar\varphi^{3} \\
&& + a_{4} ( (2-\varepsilon) a_{3} - \varepsilon a_{4} ) \sqrt{-1} \varphi^{1} \wedge \bar\varphi^{1} \wedge \sqrt{-1} \varphi^{2} \wedge \bar\varphi^{2} \wedge \sqrt{-1} \varphi^{4} \wedge \bar\varphi^{4} ,
\end{eqnarray*}
which is non-negative for suitable choices of $a_3,a_4,\varepsilon$
 (take e.g. $a_3=a_4=1$, $\e\leq\frac{1}{2}$).
In particular, $v_+(\omega)<+\infty$.
\end{exa}

 \subsection{Six-dimensional solvmanifolds with trivial canonical bundle} \label{sec:solvmanifolds}
 
 We now consider  {\it solvmanifolds} --i.e. compact quotients of a connected solvable Lie group by a closed subgroup--
 of real dimension $6$
  admitting a left-invariant complex structure with holomorphically trivial canonical bundle.
  
According to \cite{FOU15} such complex structures 
 are either nilmanifolds as above (see \cite[Theorem 1.3]{salamon}   and \cite[Theorem 2.7]{barberis-dotti-verbitsky})
  or belong to one of the classes below. We fix a coframe $\{\varphi^1,\varphi^2,\varphi^3\}$ of left-invariant $(1,0)$-forms
  and use the same notations as in the previous section.
\begin{description}
\item[(Si)] $d\varphi^1=A\varphi^{13}+A\varphi^{1\bar3}$, $d\varphi^2=-A\varphi^{23}-A\varphi^{2\bar3}$, $d\varphi^3=0$, where $A = \cos\theta+{\sqrt{-1}}\sin\theta, \theta\in[0,\pi)$. The complex structures in this family are of splitting type in the sense of \cite[Assumption 1]{Kas13}, see \cite{aouv}.

\smallskip

\item[(Sii)] $d\varphi^1=0$, $d\varphi^2=-\frac{1}{2}\varphi^{13}-\big(\frac{1}{2}+\sqrt{-1} x\big) \varphi^{1\bar3}+\sqrt{-1}x\,\varphi^{3\bar1}$, and
$d\varphi^3=\frac{1}{2}\varphi^{12}+\big(\frac{1}{2}-\frac{\sqrt{-1}}{4x}\big)\varphi^{1\bar2}+
\frac{\sqrt{-1}}{4x}\varphi^{2\bar1}$, where $x\in\mathbb R^{>0}$.

\smallskip

\item[(Siii1)] $d\varphi^1=\sqrt{-1}(\varphi^{13}+\varphi^{1\bar3})$,
 $d\varphi^2=-\sqrt{-1}(\varphi^{23}+\varphi^{2\bar3})$, $d\varphi^3=\pm \varphi^{1\bar1}$.

\smallskip

\item[(Siii2)] $d\varphi^1=\varphi^{13}+\varphi^{1\bar3}$, $d\varphi^2=-\varphi^{23}-\varphi^{2\bar3}$, $d\varphi^3=\varphi^{1\bar2}+\varphi^{2\bar1}$.

\smallskip

\item[(Siii3)] $d\varphi^1=\sqrt{-1}(\varphi^{13}+\varphi^{1\bar3})$, 
$d\varphi^2=-\sqrt{-1}(\varphi^{23}+\varphi^{2\bar3})$, $d\varphi^3=\varphi^{1\bar1}+\varphi^{2\bar2}$.

\smallskip

\item[(Siii4)] $d\varphi^1=\sqrt{-1}(\varphi^{13}+\varphi^{1\bar3})$, 
$d\varphi^2=-\sqrt{-1}(\varphi^{23}+\varphi^{2\bar3})$, $d\varphi^3=\pm(\varphi^{1\bar1}-\varphi^{2\bar2})$.

\smallskip

\item[(Siv1)] $d\varphi^1=-\varphi^{13}, \quad d\varphi^2=\varphi^{23},\quad d\varphi^3=0$. This case corresponds to the holomorphically-parallelizable {\em Nakamura manifold} \cite{nakamura}.	This and the next two following cases are complex structures of splitting type in the sense of \cite[Assumption 1]{Kas13}, see \cite{aouv}.

\smallskip

\item[(Siv2)] $d\varphi^1=2\sqrt{-1}\varphi^{13}+\varphi^{3\bar3}$, $d\varphi^2=-2\sqrt{-1}\varphi^{23}+x\,\varphi^{3\bar3}$, $d\varphi^3=0$, where $x \in \{0,1\}$.

\smallskip

\item[(Siv3)] $d\varphi^1=A\,\varphi^{13}-\varphi^{1\bar3}$, $d\varphi^2=-A\,\varphi^{23}+\varphi^{2\bar3}$, $d\varphi^3=0$, where $A\in\mathbb C$ with $|A|\neq1$. 

\smallskip

\item[(Sv)] $d\varphi^1=-\varphi^{3\bar3}$, $d\varphi^2=\frac{\sqrt{-1}}{2}\varphi^{12}+\frac{1}{2}\varphi^{1\bar3}-\frac{\sqrt{-1}}{2}\varphi^{2\bar1}$, 
$d\varphi^3=\frac{\sqrt{-1}}{2}(-\varphi^{13}+\varphi^{3\bar1})$.
\end{description}

We refer the reader to  \cite{otal-thesis, FOU15} and \cite[Table 2]{aouv2} for more details.

\medskip

Thanks to Lemma \ref{lem:average}, we can focus our attention to left-invariant hermitian structures, which are of the form \eqref{eq:metric}.
We compute $\sqrt{-1}\partial\overline\partial\omega$ case by case.

\begin{description}
\item[Si] $\sqrt{-1}\partial\overline\partial\omega=2(-1)\big( (\Re A)^{2} r^{2} \varphi^{1\bar1} + \sqrt{-1}(\Im A)^{2} u \varphi^{1\bar2} - \sqrt{-1}(\Im A)^{2} \overline{u} \varphi^{2\bar1} + (\Re A)^{2} s^{2} \varphi^{2\bar2} \big) \wedge \varphi^{3\bar3}$.
 When $(\cos\theta)^4\geq(\sin\theta)^4$, any metric is pluripositive; otherwise, there are pluripositive metrics (taking $|u|^2\leq \frac{\cos\theta}{\sin\theta}r^2s^2$). When $\cos\theta=0$, there exist   pluriclosed metrics (for $u=0$) and K\"ahler metrics (for $u=v=z=0$, see \cite[Theorem 5.1.3]{otal-thesis}). 
Plurinegative metrics never exists, while balanced metrics always exist (when $v=z=0$).

\smallskip

\item[Sii] $\sqrt{-1}\partial\overline\partial\omega=\left( \frac{4 \, t^{2} x^{2} + t^{2}}{16 \, x^{2}} \right) 
(-1)\varphi^{1\bar1}\wedge \varphi^{2\bar2} + 
\left( s^{2} x^{2} + \frac{1}{4} \, s^{2} \right) 
(-1)\varphi^{1\bar1}\wedge \varphi^{3\bar3}$.
 Any metric is pluripositive.
There is neither pluriclosed metrics, nor plurinegative metrics,  nor K\"ahler metrics. 
Balanced metrics always exist (take $u=v=z=0$).

\smallskip

\item[Siii1] $\sqrt{-1}\partial\overline\partial\omega=2\big( \sqrt{-1} \, u  \sqrt{-1}\varphi^{1\bar2} - \sqrt{-1} \, \overline{u} \sqrt{-1}\varphi^{2\bar1} \big) \wedge \sqrt{-1}\varphi^{3\bar3}$.
There exist pluriclosed metrics (take $u=0$), but neither K\"ahler metrics nor balanced metrics.

\smallskip

\item[Siii2] $\sqrt{-1}\partial\overline\partial\omega=t^{2} (-1)\varphi^{1\bar1} \wedge \varphi^{2\bar2} 
+ 2 \, r^{2} (-1) \varphi^{1\bar1} \wedge \varphi^{3\bar3} 
+ 2 \, s^{2} (-1)\varphi^{2\bar2} \wedge \varphi^{3\bar3}$.
Every metric is pluripositive. There is neither pluriclosed, nor plurinegative,
nor K\"ahler metrics. Balanced metrics 
exist (take $u\in\mathbb R$ and $v=z=0$).

\smallskip

\item[Siii3] $\sqrt{-1}\partial\overline\partial\omega=-t^{2} (-1)\varphi^{1\bar1} \wedge \varphi^{2\bar2} +
 2 (-1)\sqrt{-1} \big(  u  \varphi^{1\bar2} -  \overline{u}  \varphi^{2\bar1} \big) \wedge  \varphi^{3\bar3}$.
 There are neither pluripositive, nor pluriclosed metrics. 
There exists plurinegative metrics (take $u=0$), but neither K\"ahler nor balanced metrics.

\smallskip

\item[Siii4] $\sqrt{-1}\partial\overline\partial\omega=t^{2}  (-1)\varphi^{1\bar1} \wedge \varphi^{2\bar2} 
+ 2 (-1)\big( \sqrt{-1} \, u  \varphi^{1\bar2} - \sqrt{-1} \, \overline{u}  \varphi^{2\bar1} \big) \wedge \varphi^{3\bar3}$.
 Pluriclosed metrics and plurinegative metrics never exist. 
There are pluripositive metrics (take $u=0$), but no K\"ahler metrics.
There are balanced metrics (characterized by parameters $r^2=s^2$, $v=z=0$).

\smallskip

\item[Siv1] $\sqrt{-1}\partial\overline\partial\omega=\frac{(-1)}{2} \big( r^{2} \varphi^{1\bar1} + \sqrt{-1} \, u  \varphi^{1\bar2} - \sqrt{-1} \, \overline{u}  \varphi^{2\bar1} + s^{2}  \varphi^{2\bar2} \big) \wedge \varphi^{3\bar3}$. 
Every metric is pluripositive, there are neither pluriclosed nor plurinegative metrics. There is no K\"ahler metric, and every metric is balanced.

\smallskip

\item[Siv2] $\sqrt{-1}\partial\overline\partial\omega=2 (-1)\big( r^{2}  \varphi^{1\bar1} + \sqrt{-1} \, u  \varphi^{1\bar2} - \sqrt{-1} \, \overline{u} \varphi^{2\bar1} + s^{2} \varphi^{2\bar2} \big) \wedge \varphi^{3\bar3}$. 
Every metric is pluripositive, there are neither pluriclosed nor plurinegative metrics. There are no K\"ahler metrics and no balanced metrics.

\smallskip

\item[Siv3] $\sqrt{-1}\partial\overline\partial\omega=\frac{(-1)}{2} \big( |A - 1|^2 r^{2} \varphi^{1\bar1} + \sqrt{-1} \, |A + 1|^2 u \varphi^{1\bar2} -\sqrt{-1} \, |A + 1|^2 \overline{u} \varphi^{2\bar1} + |A - 1|^2 s^{2} \varphi^{2\bar2}\big)\wedge \varphi^{3\bar3}$.
There are pluripositive metrics: when $\Re A\leq0$, any metric is pluripositive; when $\Re A>0$, pluripositive metrics are characterized by $|u|^2 \leq \left|\frac{A-1}{A+1}\right|^4r^2s^2$. 
There are neither pluriclosed nor plurinegative, nor  K\"ahler metrics. Balanced metrics always exist 
(take $v=z=0$). 

\smallskip

\item[Sv] $\sqrt{-1}\partial\overline\partial\omega=\frac{(-1)}{2} \varphi^{1\bar1} \wedge \big( \sqrt{-1} \, v  \varphi^{2\bar3} - \sqrt{-1} \, \overline{v} \varphi^{3\bar2} + \frac{1}{4} \, s^{2} \varphi^{3\bar3}\big)$. There are pluripositive metrics (take $v=0$).
There are neither pluriclosed, nor plurinegative, nor K\"ahler, nor balanced metrics. 
\end{description}

Notice that it is no longer true that the plurisigned property of left-invariant metrics is completely determined by the complex structure.

\smallskip

We summarize the previous dicussion in the following:

\begin{theorem} \label{thm:solvmanifold}
Let $X$ be a non-K\"ahler 
six-dimensional solvmanifold endowed with a left-invariant complex structure 
with holomorphically-trivial canonical bundle.
Then $X$ admits a plurisigned hermitian metric.
More precisely:
\begin{itemize}
\item either $X$ belongs to the class (Siii1) and there are pluriclosed   metrics;
\item or $X$ belongs to  (Siii3) and there are plurinegative non pluriclosed metrics;
\item or else $X$ belongs to any of the remaining classes, and it admits a left-invariant pluripositive hermitian metric that is not pluriclosed.
\end{itemize}
\end{theorem}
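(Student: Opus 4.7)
The plan is to carry out a case-by-case analysis across the ten families (Si), (Sii), (Siii1)--(Siii4), (Siv1)--(Siv3), (Sv) of six-dimensional solvmanifolds with holomorphically trivial canonical bundle listed in Section \ref{sec:solvmanifolds}. First I would reduce the problem to left-invariant hermitian metrics using Belgun's symmetrization trick as packaged in Lemma \ref{lem:average}: since the underlying solvable Lie group admits a compact quotient, it is unimodular, and the symmetrization operator $\mu$ commutes with $d$, $\partial$, $\overline\partial$, so it sends any plurisigned hermitian metric to a left-invariant plurisigned hermitian metric of the same type. This reduces the existence question to a linear-algebraic problem on the Lie algebra.

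Next I would parametrize a general left-invariant hermitian form in the fixed coframe $\{\varphi^1,\varphi^2,\varphi^3\}$ in the form \eqref{eq:metric}, with positivity constraints on $r^2,s^2,t^2,u,v,z$ coming from requiring $\omega$ to be a hermitian metric (these are Sylvester-type inequalities for the Gram matrix). For each of the ten families, the structure equations on $d\varphi^j$ give an explicit expression for $\partial\overline\partial\omega$ as a left-invariant $(2,2)$-form. Since any strongly-positive $(2,2)$-form on a threefold is a sum of terms $\sqrt{-1}\alpha^1\wedge\overline{\alpha^1}\wedge\sqrt{-1}\alpha^2\wedge\overline{\alpha^2}$, and since positivity and strong positivity coincide for $(2,2)$-forms in dimension $3$ (see \cite{Mic82}), checking the sign of $dd^c\omega$ reduces to inspecting the coefficients of this expression on a natural basis of invariant $(2,2)$-forms.

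I would then go through the ten families in order. In most cases (namely (Si) with suitable $\theta$, (Sii), (Siii2), (Siii4), (Siv1), (Siv2), (Siv3), (Sv)) one finds that setting $u=v=z=0$, or more specifically tuning the off-diagonal parameters to $0$, yields a pluripositive (but not pluriclosed) left-invariant metric, because the resulting $\sqrt{-1}\partial\overline\partial\omega$ is a non-negative combination of the strongly-positive forms $\sqrt{-1}\varphi^{i\bar{\imath}}\wedge\sqrt{-1}\varphi^{j\bar{\jmath}}$. For (Siii1) one obtains pluriclosed metrics by setting $u=0$, killing the only term appearing in $\sqrt{-1}\partial\overline\partial\omega$. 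For (Siii3) the coefficient of $\sqrt{-1}\varphi^{1\bar1}\wedge\sqrt{-1}\varphi^{2\bar2}$ has the opposite sign from the pluripositive families and one is forced into the plurinegative regime; again $u=0$ gives a plurinegative non pluriclosed metric.

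The main obstacle is not conceptual but bookkeeping: one must verify for each family that $\partial\overline\partial\omega$ has a definite sign on a suitable subspace of hermitian parameters, and carefully track which of the three plurisigned conditions is achievable (and also the non-K\"ahler hypothesis). In particular, I expect the delicate point to be checking that in families like (Siii3) no choice of parameters produces a pluripositive metric, which requires noticing that the leading term $-t^2\sqrt{-1}\varphi^{1\bar1}\wedge\sqrt{-1}\varphi^{2\bar2}$ cannot be cancelled by the off-diagonal $u$-contributions because $t^2>0$ for any hermitian metric. Once every case is verified, the theorem follows by assembling the three bullet-point conclusions from the individual computations.
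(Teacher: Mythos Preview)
Your proposal is correct and follows essentially the same approach as the paper: reduce to left-invariant metrics via the symmetrization Lemma \ref{lem:average}, parametrize them by \eqref{eq:metric}, compute $\sqrt{-1}\partial\overline\partial\omega$ explicitly from the structure equations of each of the ten families (Si)--(Sv), and read off the sign by inspecting the coefficients against the basis of invariant $(2,2)$-forms. The paper carries out exactly these computations just before stating the theorem (which is presented as a summary of that discussion), finding pluriclosed metrics for (Siii1) at $u=0$, plurinegative non-pluriclosed metrics for (Siii3) at $u=0$, and pluripositive non-pluriclosed metrics in all remaining non-K\"ahler cases, just as you outline.
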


Recall that these three conditions are mutually exclusive by Corollary \ref{cor:obstruction}.
The only K\"ahler example in the above list correspond to a special Lie algebra
${\mathcal G}={\mathcal A}_{5,17}^{0,0,1} \oplus \R$ of splitting type
(see \cite[Theorem 2.18]{FOU15}).

   \begin{remark}
If a hermitian metric is both balanced and pluriclosed, then it is K\"ahler \cite[Remark 1]{alexandrov-ivanov}.
It is conjectured \cite[Problem 3]{fino-vezzoni} that a compact complex manifold admitting both pluriclosed metrics and balanced metrics also admits K\"ahler metrics. 
The conjecture is confirmed for the above nilmanifolds and solvmanifolds 
by \cite[Theorem 6.3 and Theorem 6.4]{fino-vezzoni}.
We notice that these facts fail when replacing pluriclosed with plurisigned: 
the Iwasawa manifold does not admit any K\"ahler metric \cite{fernandez-gray, benson-gordon, hasegawa}, but every left-invariant hermitian metric on it is both balanced and pluripositive.
\end{remark}


\begin{thebibliography}{99}

\bibitem[AG86]{abbena-grassi}
E. Abbena, A. Grassi, \emph{Hermitian left invariant metrics on complex Lie groups and cosymplectic Hermitian manifolds}, Boll. Un. Mat. Ital. A (6) \textbf{5} (1986), no. 3, 371--379.

  \bibitem[AI01]{alexandrov-ivanov} B. Alexandrov, S. Ivanov, 
  \emph{Vanishing theorems on Hermitian manifolds,}
   Differential Geom. Appl. \textbf{14} (2001), no. 3, 251--265.

\bibitem[ABD11]{andrada-barberis-dotti} A. Andrada, M. L. Barberis, I. G. Dotti, 
\emph{Classification of abelian complex structures on $6$-dimensional Lie algebras, }
 J. Lond. Math. Soc. (2) \textbf{83} (2011), no. 1, 232--255. 
 Corrigendum: 
  J. Lond. Math. Soc. (2) \textbf{87} (2013), no. 1, 319--320.

\bibitem[ADOS22]{ados} D.~Angella, A.~Dubickas, A.~Otiman, J.~Stelzig,
\emph{On metric and cohomological properties of Oeljeklaus-Toma manifolds}, preprint arXiv:2201.06377.


\bibitem[AK17]{angella-kasuya-1} D. Angella, H. Kasuya, 
\emph{Bott-Chern cohomology of solvmanifolds, }
 Ann. Global Anal. Geom. \textbf{52} (2017), no. 4, 363--411. 

\bibitem[AOUV17]{aouv} D. Angella, A. Otal, L. Ugarte, R. Villacampa, 
\emph{Complex structures of splitting type, } 
Rev. Mat. Iberoam. \textbf{33} (2017), no. 4, 1309--1350.

\bibitem[AOUV]{aouv2} D. Angella, A. Otal, L. Ugarte, R. Villacampa, 
\emph{On Gauduchon connections with K\"ahler-like curvature,}
 to appear in  Commun. Anal. Geom.
 
 \bibitem[AHS78]{AHS78} M.F.Atiyah, N.J.Hitchin, I.M.Singer,
\emph{Self-duality in four-dimensional Riemannian geometry.}, 
Proc. Roy. Soc. London Ser. A 362 (1978), no1711, 425-461.
 
\bibitem[BDV09]{barberis-dotti-verbitsky} M. L. Barberis, I. G. Dotti, M. Verbitsky, 
\emph{Canonical bundles of complex nilmanifolds, with applications to hypercomplex geometry, }
 Math. Res. Lett. \textbf{16} (2009), no. 2, 331--347.  
 
\bibitem[Baz18]{bazzoni} G.~Bazzoni,
\emph{Locally conformally symplectic and K\"ahler geometry},
EMS Surv. Math. Sci. \textbf{5} (2018), no. 1-2, 129--154. 
   
  \bibitem[BT82]{BT82} E.~Bedford, B.~A. Taylor, 
\emph{A new capacity for plurisubharmonic functions}.
Acta Math. \textbf{149} (1982), no.~1-2, 1--40.  
 
\bibitem[Bel00]{belgun} F. A. Belgun, 
\emph{On the metric structure of non-K\"ahler complex surfaces, }
 Math. Ann. \textbf{317} (2000), no. 1, 1--40.

\bibitem[BG88]{benson-gordon} C. Benson, C. Gordon, 
\emph{K\"ahler and symplectic structures on nilmanifolds, }
 Topology \textbf{27} (1988), no. 4, 513--518.

\bibitem[COUV16]{couv} M. Ceballos, A. Otal, L. Ugarte, R. Villacampa,
\emph{Invariant complex structures on $6$-nilmanifolds: classification, Fr\"olicher spectral sequence and special Hermitian metrics, }
 J. Geom. Anal. \textbf{26} (2016), no. 1, 252--286.

 \bibitem[Chi14]{Chi14} I. Chiose, 
 \emph{Obstructions to the existence of K\"ahler structures on compact complex manifolds.}, 
  Proc. Amer. Math. Soc. 142 (2014), no. 10, 3561-3568.

 \bibitem[Chi16]{Chi13} I. Chiose, 
 \emph{The K\"ahler rank of compact complex manifolds}, 
 J. Geom. Anal. 26 (2016), no. 1, 603-615.
 
 \bibitem[Chi16b]{Chi16} I. Chiose,
\emph{On the invariance of the total Monge-Amp\`ere volume of hermitian metrics},
Preprint arXiv:1609.05945.  

\bibitem[CGZ13]{CGZ13} D.Coman, V.Guedj, A.Zeriahi:
{\it Extension of plurisubharmonic functions with growth control. }
Journal f\"ur die reine und angewandte Mathematik, {\bf 676} (2013), 33-49.
  
 \bibitem[Dem92]{Dem92}  J.P. Demailly,
 {\it Regularization of closed positive currents and interSection theory}.
    J. Algebraic Geom. {\bf 1} (1992), no. 3, 361--409.
    
  \bibitem[Dem]{Dem12}  J.P. Demailly,  
   \emph{Analytic methods in algebraic geometry},
Surveys of Modern Mathematics, 1. International Press; 
Higher Education Press, Beijing, 2012. viii+231 pp.

    \bibitem[DP04]{DP04}  J.P. Demailly, M. P\u{a}un,
   \emph{Numerical characterization of the K\"ahler cone of a compact K\"ahler manifold. }
 Ann. of Math. (2) 159 (2004), no. 3, 1247--1274. 
 
     \bibitem[DLM17]{DLM17}  G.Deschamps, N.Le Du, C.Mourougane,
   \emph{Hessian of the natural hermitian form on twistor spaces. }
   Bull. Soc. Math. France 145 (2017), no1, 1-7.
  
    \bibitem[Din16]{Din16} S.~Dinew, 
   \emph{Pluripotential theory on compact hermitian manifolds},
   Ann. Fac. Sci. Toulouse Math. (6) 25 (2016), no. 1, 91--139.
   
   \bibitem[DK12]{DK12} S.~Dinew, S.~Ko{\l}odziej,
   \emph{Pluripotential estimates on compact hermitian manifolds. }
   Advances in geometric analysis, 69-86, Adv. Lect. Math. (ALM), 21, Int. Press, 2012. 
  
   \bibitem[DO98]{DO98} S.Dragomir, L.Ornea
   \emph{Locally conformal Kähler geometry.}
 Progress in Mathematics, 155. Birkh\"auser Boston, Inc., Boston, MA, 1998. xiv+327 pp.
 
  \bibitem[Eg01]{Eg01} N.Egidi,
\emph{Special metrics on compact complex manifolds. }
 Differential Geom. Appl. 14 (2001), no. 3, 217-234. 

\bibitem[EFV12]{enrietti-fino-vezzoni}
N. Enrietti, A. Fino, L. Vezzoni, \emph{Tamed symplectic forms and strong K\"ahler with torsion metrics}, J. Symplectic Geom. \textbf{10} (2012), no. 2, 203--223.
 
 \bibitem[FG86]{fernandez-gray} M. Fern\'andez, A. Gray, 
 \emph{The Iwasawa manifold, }
 Differential geometry, Pe\~n\'iscola 1985, 157--159, 
 {\em Lecture Notes in Math.}, \textbf{1209}, Springer, Berlin, 1986.

\bibitem[FG04]{fino-grantcharov} A. Fino, G. Grantcharov, 
\emph{Properties of manifolds with skew-symmetric torsion and special holonomy}, Adv. Math. \textbf{189} (2004), no. 2, 439--450.

\bibitem[FGV19]{fino-grantcharov-vezzoni}
A. Fino, G. Grantcharov, L. Vezzoni, \emph{Astheno-K\"ahler and balanced structures on fibrations}, Int. Math. Res. Not. IMRN \textbf{2019} (2019), no. 22, 7093--7117.

\bibitem[FOU15]{FOU15} A. Fino, A.Otal, L.Ugarte,
\emph{Six-dimensional solvmanifolds with holomorphically trivial canonical bundle},
I.M.R.N. 2015, no24, 13757-13799.
 
\bibitem[FPS04]{FPS04} A. Fino, M. Parton, S. Salamon,
\emph{Families of strong KT structures in six dimensions},
Comment. Math. Helv. 79 (2004), no. 2, 317--340. 

\bibitem[FT09]{FT09} A.~Fino and A.~Tomassini, 
\emph{Blow-ups and resolutions of strong {K}\"{a}hler  with torsion metrics}, 
Adv. Math. \textbf{221} (2009), no.~3, 914--935.

\bibitem[FV15]{fino-vezzoni} A. Fino, L. Vezzoni, 
\emph{Special Hermitian metrics on compact solvmanifolds, } 
J. Geom. Phys. \textbf{91} (2015), 40--53.

\bibitem[GL10]{GL10} B. Guan, Q. Li,
\emph{Complex Monge-Amp\`ere equations and totally real submanifolds.}
 Adv. Math. 225 (2010), no. 3, 1185--1223.

\bibitem[GL21a]{GL21a} V.~Guedj, C.~H. Lu, {\it Quasi-plurisubharmonic envelopes 1: Uniform estimates on K\"ahler manifolds},  preprint arXiv:2106.04273. 

\bibitem[GL21b]{GL21b} V.~Guedj, C.~H. Lu,  {\it Quasi-plurisubharmonic envelopes 2: Bounds on Monge-Amp\`ere volumes}, preprint arXiv:2106.04272.  To appear in Algebraic Geometry.

\bibitem[GL21c]{GL21c} V.~Guedj, C.~H. Lu, {\it Quasi-plurisubharmonic envelopes 3: Solving Monge-Amp\`ere equations on hermitian manifolds},  preprint arXiv:2107.01938. 

\bibitem[GZ]{GZbook} V.~Guedj, A.~Zeriahi, 
\emph{Degenerate complex {M}onge-{A}mp\`ere  equations}, 
EMS Tracts in Mathematics, vol.~26, European Mathematical Society  (EMS), Z\"{u}rich, 2017. 

\bibitem[HL83]{HL83}  R.Harvey, B.Lawson,
{\em An intrinsic characterization of K\"ahler manifolds.}
 Invent. Math. 74 (1983), no. 2, 169-198. 

\bibitem[Has89]{hasegawa} K. Hasegawa, 
\emph{Minimal models of nilmanifolds, }
 Proc. A.M.S.. \textbf{106} (1989), no. 1, 65--71.

\bibitem[Hir60]{Hir60}  H.Hironaka, 
{\em On the theory of birational blowing-up.}
hesis, Harvard (1960) (unpublished).

\bibitem[Hit81]{Hit81}  N.J.Hitchin,
{\em K\"ahlerian twistor spaces.}
Proc. London Math. Soc. (3) 43 (1981) 133-150.

\bibitem[Iv04]{Iv04}  S.Ivashkovich,
{\em Extension properties of meromorphic mappings with values in non-K\"ahler complex manifolds.}
 Ann. of Math. (2) 160 (2004), no. 3, 795-837.
  
 \bibitem[KV98]{KV98} D.Kaledin, M.Verbistky,
\emph{Non-hermitian Yang-Mills connections},
Selecta Math. 4 (1998), no2, 279-320.

 \bibitem[Kas13]{Kas13} H.Kasuya
\emph{Vaisman metrics on solvmanifolds and Oeljeklaus-Toma manifolds.}
Bull. Lond. Math. Soc. 45 (2013), no. 1, 15-26. 

\bibitem[KN15]{KN15} S.~Ko{\l}odziej, N.C.~Nguyen,
\emph{Weak solutions to the complex Monge-Amp\`ere equation on compact hermitian manifolds},
Contemp. Math. 644 (2015), 141--158.

   \bibitem[KN19]{KN19} S.~Ko{\l}odziej, N.C.Nguyen,
 \emph{ Stability and regularity of solutions of the Monge-Ampère equation on hermitian manifolds},
Adv. Math. 346 (2019), 264-304. 

    \bibitem[LPT21]{LPT21} C.~H. Lu, T.T. Phung, T.D. T\^o,
 \emph{ Stability and H\"older regularity of solutions to complex Monge-Amp\`ere equations on compact hermitian manifolds.}
  Ann. Inst. Fourier (Grenoble) 71 (2021), no. 5, 2019-2045.

\bibitem[Mag86]{magnin} L. Magnin, 
\emph{Sur les alg\`ebres de Lie nilpotentes de dimension $\leq7$, }
 J. Geom. Phys. \textbf{3} (1986), no. 1, 119--144.

\bibitem[Mal45]{malcev}
A. Malcev, \emph{On solvable Lie algebras}, Bull. Acad. Sci. URSS. Sr. Math. [Izvestia Akad. Nauk SSSR] \textbf{9} (1945), 329--356.
  
\bibitem[Mic82]{Mic82}
M.-L.Michelsohn, \emph{On the existence of special metrics in complex geometry}, Acta Math. 149 (1982), no1, 261-295.

\bibitem[Mil76]{milnor}
J. Milnor, \emph{Curvatures of left invariant metrics on Lie groups}, Adv. Math. \textbf{21} (1976), no. 3, 293--329.

\bibitem[Mor58]{morozov}
V. V. Morozov, \emph{Classification of nilpotent Lie algebras of sixth order}, Izv. Vys\c{s}. U\c{c}ebn. Zaved. Matematika \textbf{1958} (1958) no. 4 (5), 161--171.

\bibitem[Nak75]{nakamura}
I. Nakamura, \emph{Complex parallelisable manifolds and their small deformations,} J. Differential Geometry 10 (1975), 85-112. 

    \bibitem[OV10]{OV10} L.Ornea, M.Verbistky
\emph{Locally conformal K\"ahler manifolds with potential},
Math. Ann. \textbf{348} (2010), 25--33.
 
    \bibitem[OV11]{OV11} L.Ornea, M.Verbistky
  \emph{A report on locally conformally K\"ahler manifolds.}
 Harmonic maps and differential geometry, 135-149,
Contemp. Math., 542, A.M.S., Providence, RI, 2011. 

   \bibitem[OV20]{OV20} L.Ornea, M.Verbistky
  \emph{Hopf surfaces in locally conformally K\"ahler manifolds with potential.}
   Geom. Dedicata \textbf{207} (2020), 219-226.

    \bibitem[OV]{ornea-verbitsky-book} L.Ornea, M.Verbistky, \emph{Principles of Locally Conformally K\"ahler Geometry}.
   
     \bibitem[Ota14]{otal-thesis} A. Otal, 
     {\em Solvmanifolds with holomorphically trivial canonical bundle},
      PhD Thesis, Universidad de Zaragoza, 2014.

   \bibitem[Ot20]{Ot20} A. Otiman,
  \emph{Special hermitian metrics on Oeljeklaus-Toma manifolds},
   	arXiv:2009.02599. To appear in Bulletin of the London Mathematical Society.

  \bibitem[Pop17]{popovici} D. Popovici,
\emph{Volume and self-intersection of differences of two nef classes},
Ann. Sc. Norm. Super. Pisa Cl. Sci. (5) 17 (2017), no. 4, 1255--1299.

\bibitem[Sal82]{Sal82} S. M. Salamon,
\emph{Quaternionic K\"ahler manifolds.}
 Invent. Math. 67 (1982), no. 1, 143-171. 
 
\bibitem[Sal01]{salamon} S. M. Salamon,
\emph{Complex structures on nilpotent Lie algebras, }
 J. Pure Appl. Algebra \textbf{157} (2001), no. 2-3, 311--333.
 
   \bibitem [STW17]{STW17} G. Sz\'ekelyhidi,  V. Tosatti, B. Weinkove, 
 \emph{Gauduchon metrics with prescribed volume form},
 Acta Math. 219 (2017), no. 1, 181--211. 
 
 \bibitem [Taub92]{Taub92} C.H.Taubes
 \emph{The existence of anti-self-dual conformal structures.}
 Journal of Diff. Geometry 36 (1992), no1, 163-253.
 
  \bibitem [TW10]{TW10} V. Tosatti, B. Weinkove,
\emph{The complex Monge-Amp\`ere equation on compact hermitian manifolds.}
 J. Amer. Math. Soc. 23 (2010), no. 4, 1187--1195.
 
  \bibitem[Uga07]{ugarte-transf} L. Ugarte, 
  \emph{Hermitian structures on six-dimensional nilmanifolds, }
  Transform. Groups \textbf{12} (2007), no. 1, 175--202.

\bibitem[UV14]{ugarte-villacampa} L. Ugarte, R. Villacampa, 
\emph{Non-nilpotent complex geometry of nilmanifolds and heterotic supersymmetry, }
 Asian J. Math. \textbf{18} (2014), no. 2, 229--246.
 
   \bibitem [V82]{V82} I.Vaisman
\emph{ Generalized Hopf manifolds.}
 Geom. Dedicata 13 (1982), no. 3, 231-255.
 
   \bibitem [Verb14]{Verb14} M.Verbitsky,
\emph{ Rational curves and special metrics on twistor spaces.}
Geometry and Topology 18 (2014), 897-909.

   \bibitem [Wan54]{Wan54}  H.-C.Wang
\emph{ Closed manifolds with homogeneous complex structure.}
Amer. J. Math. 76 (1954), 1-32. 
 
\bibitem[Yac98]{yachou} A.~Yachou,
\emph{Sur les vari\'et\'es semi-k\"ahl\'eriennes},
PhD Thesis, Universit\'e de Lille, 1998.
 
\bibitem [Yau78]{Yau78}  S.~T.~Yau,
\emph{On the Ricci curvature of a compact K{\"a}hler manifold and the complex Monge-Amp{\`e}re equation. I.}
 Comm. Pure Appl. Math. {\bf 31} (1978), no. 3, 339--411.  

\end{thebibliography}
\end{document}